 % !TEX encoding = UTF-8 Unicode
\documentclass{article}
\usepackage{fancyhdr}
\usepackage{amscd}
\usepackage{graphicx}
\usepackage{amsmath}
\usepackage{amssymb}
\usepackage{amsthm}
\usepackage{mathrsfs}
\newtheorem{dfn}{Definition}[section]
\newtheorem{thm}[dfn]{Theorem}
\newtheorem{prop}[dfn]{Proposition}
\newtheorem{lem}[dfn]{Lemma}

\newtheorem{rem}[dfn]{Remark}

\newtheorem{ex}[dfn]{Example}

\newtheorem{alg}{Algorithm}

\newcommand{\diag}{{\rm diag}}

\numberwithin{equation}{section}

\setlength{\topmargin}{-30pt}
\setlength{\oddsidemargin}{0.5cm}
\setlength{\evensidemargin}{0.5cm}
\setlength{\textwidth}{15cm}

\begin{document}

\title{Numerical validation of blow-up solutions with quasi-homogeneous compactifications}

\author{Kaname Matsue\thanks{Institute of Mathematics for Industry, Kyushu University, Fukuoka 819-0395, Japan {\tt kmatsue@imi.kyushu-u.ac.jp}} $^{,}$ \footnote{International Institute for Carbon-Neutral Energy Research (WPI-I$^2$CNER), Kyushu University, Fukuoka 819-0395, Japan}
\ and Akitoshi Takayasu\thanks{Faculty of Engineering, Information and Systems, University of Tsukuba, 1-1-1 Tennodai, Tsukuba, Ibaraki 305-8573, Japan ({\tt takitoshi@risk.tsukuba.ac.jp})}
}
%\date{}
\maketitle

\begin{abstract}
We provide a numerical validation method of blow-up solutions for finite dimensional vector fields admitting asymptotic quasi-homogeneity at infinity.
Our methodology is based on quasi-homogeneous compactifications containing a new compactification, which shall be called a quasi-parabolic compactification.
Divergent solutions including blow-up solutions then correspond to global trajectories of associated vector fields with appropriate time-variable transformation tending to equilibria on invariant manifolds representing infinity.
We combine standard methodology of rigorous numerical integration of differential equations with Lyapunov function validations around equilibria corresponding to divergent directions, which yields rigorous upper and lower bounds of blow-up times as well as rigorous profile enclosures of blow-up solutions.

%We construct a class of admissible compactifications of Euclidean spaces as well as dynamical systems on them reflecting their scaling properties near infinity for studying blow-up solutions of ordinary differential equations.
%The basic idea is based on the quasi-homogeneous desingularization (blowing-up) of singularities. 
%Our class characterizes a generalization of admissible (homogeneous) compactifications \cite{EG2006} and quasi-Poincar\'{e} compactifications \cite{Mat}.
%A nontrivial example of such compactifications is shown, which overcomes a difficulty of analysis near infinity which arises in Poincar\'{e}-type compactifications.
%We also provide a computer-assisted procedure of blow-up solutions as well as their explicit enclosures of blow-up times.
%Standard techniques of dynamical systems around hyperbolic equilibria yield validations of blow-up solutions, including unstable ones.
%Several numerical validation examples are shown for demonstrating applicability of our methodology.
\end{abstract}

{\bf Keywords:} parabolic compactifications, quasi-homogeneous desingularizations, blow-up solutions of ODEs, numerical validations, Keller-Segel-like systems.
\par
\bigskip
{\bf AMS subject classifications : } 34A26, 34C08, 35B44, 37B25, 37C99, 37M99, 58K55, 65D30, 65G30, 65L99, 65P99

\section{Introduction}

Our concern in this paper is blow-up solutions of the following initial value problem of an autonomous system of ordinary differential equations (ODEs) in $\mathbb{R}^n$:
\begin{equation}
\label{eqn:ODE}
\frac{dy(t)}{dt}=f (y(t)),\quad y(0)=y_0,
\end{equation}
where $t\in[0,T)$ with $0<T\le\infty$, $f:\mathbb{R}^n\to\mathbb{R}^n$ is a $C^1$ function and $y_0\in\mathbb{R}^n$.
We shall call a solution $\{y(t)\}$ of the initial value problem (\ref{eqn:ODE}) a {\em blow-up solution} if
\[
	t_{\max}:=\sup\left\{\bar t\mid \mbox{a solution $y\in C^1([0,\bar t))$ of \eqref{eqn:ODE} exists}\right\} < \infty.
\]
The maximal existence time $t_{\max}$ is then called the {\em blow-up time} of \eqref{eqn:ODE}.
Blow-up solutions can be seen in many dynaimcal systems generated by (partial) differential equations like nonlinear heat equations or Keller-Segel systems.
These are categorized as the presence of finite-time singularity in dynamical systems, and many researchers have broadly studied these phenomena from mathematical, physical, numerical viewpoints and so on.
Fundamental questions for blow-up problem are {\em whether or not a solution blows up} and, if does, {\em when, where, and how} it blows up.
In general blow-up phenomena depend on initial data.
Rigorous concrete detection of fundamental information of blow-up solutions as functions of initial data remains a nontrivial problem.
\par
\bigskip
Recently, authors and their collaborators have provided a {\em numerical validation procedure based on interval and affine arithmetics} for calculating rigorous blow-up profiles and their blow-up times \cite{TMSTMO}. 
The approach is based on {\em compactification} of phase space; embedding into a compact manifold $M$, possibly with boundary.
In this methodology, the infinity on the original phase space can correspond to a point on $\partial M$ or a specified point on $M$ called a point at infinity.
Combining a compactification with an appropriate time-scale transformation, called {\em time-variable desingularization}, suitable for given vector field, divergent solutions including blow-up solutions are characterized as global trajectories of the transformed vector field on $M$ tending to a point, such as an equilibrium $x_\ast$, on $\partial M$.
Finally, the {\em Lyapunov function validation} (\cite{MHY2016}) around $x_\ast\in \partial M$ is applied to derivation of a re-parameterization of trajectories so that we can validate rigorous lower and upper bounds of blow-up times $t_{\max}$ with numerical validations.
In the present methodology, (i) rigorous numerical integration of ODEs, (ii) eigenvalue validations, and (iii) polynomial estimates essentially realize numerical validations of blow-up solutions with their blow-up times.
\par
However, applicability of proposed methodology there is restricted to vector fields which are asymptotically {\em homogeneous} at infinity, since applied compactifications are assumed to respect {\em homogeneous} scalings.
In other words, verifications of blow-ups for differential equations possessing, say {\em quasi-homogeneous scaling laws} such as $h(u,v) := u^2-v$ may return meaningless information\footnote{
%	footnote
The function has a scaling law $h(ru, r^2 v) = r^2 h(u,v)$ holds for all $r\in \mathbb{R}$.
%	footnote : end
}.
If we apply such a numerical validation methodology to a broad class of differential equations, we have to choose appropriate compactifications which appropriately extracts information of dynamics at infinity.
\par
Inspired by the above work, the first author has discussed blow-up solutions for differential equations which are asymptotically {\em quasi-homogeneous} at infinity from the viewpoint of dynamical systems \cite{Mat}.
There a new quasi-homogeneous compactification called {\em quasi-Poincar\'{e} compactification} is defined as a quasi-homogeneous analogue of well-known Poincar\'{e} compactifications and as a global compactification alternative of well-known local compactifications which shall be called {\em directional compactifications} (e.g., \cite{DH1999} with a terminology {\em Poincar\'{e}-Lyapunov disks}).
By using the same essence as previous works about blow-up solutions \cite{EG2006, TMSTMO}, several blow-up solutions for asymptotically quasi-homogeneous vector fields can be characterized by trajectories on stable manifolds of \lq\lq hyperbolic invariant sets" on the boundary $\partial M$ of a compactified manifold $M$.
Moreover, such blow-up solutions completely characterize their blow-up rates from the growth rate of original vector fields.
The same characterizations also make sense for dynamical systems with directional compactifications.
A series of studies involving characterization of blow-up solutions in \cite{Mat} contain blow-up results in the previous work \cite{EG2006}, and the applications to numerical validations of blow-up solutions for systems of asymptotically quasi-homogeneous differential equations are expected.
\par
\bigskip
Our present aim is to provide numerical validation methodology of blow-up solutions for systems of  differential equations with asymptotic quasi-homogeneity at infinity.
It turns out that fundamental features of a {\em good} class of quasi-homogeneous compactifications  enable us to apply the same methodology as \cite{TMSTMO} to the present blow-up validations.
\par
The rest of this paper is organized as follows.
In Section \ref{section-QHC}, we define an admissible class of compactifications with given quasi-homogeneous type.
We see that our admissible class admits the same asymptotic properties at infinity as quasi-Poincar\'{e} compactifications introduced in \cite{Mat}.
As a nontrivial example, we also introduce a concrete compactification which is admissible in our sense, called a {\em quasi-parabolic compactification}.
This compactification is a quasi-homogeneous analogue of (homogeneous) parabolic compactifications \cite{EG2006, TMSTMO}.
Directional compactifications are also reviewed.
In Section \ref{section-dyn-infty}, we study vector fields and dynamics on compactified manifolds.
Under our admissible compactifications, we have a good correspondence of dynamical systems between on original phase spaces and on compactified manifolds. 
Moreover, as in the case of quasi-Poincar\'{e} compactifications, we can define desingularized vector fields on compactified manifolds so that {\em dynamics at infinity} makes sense.
Here we have a new essential result that, for $C^1$ vector field $f$ in the original problem, the desingularized vector field $g$ with quasi-parabolic compactifications becomes $C^1$ including the boundary of compactified manifolds corresponding to the infinity.
This property is very crucial because {\em the desingularized vector field $g$ with quasi-Poincar\'{e} compactifications is not always $C^1$ even if $f$ is sufficiently smooth}. Details are shown in \cite{Mat}.
The feature of quasi-parabolic compactifications enables us to study stability analysis for dynamical systems without any obstructions of regularity of vector fields.
In Section \ref{section-blow-up}, we provide criteria for validating blow-up solutions and numerical validation procedure for blow-up solutions with their blow-up times.
Our criteria consists of not only pure mathematical arguments but also numerical validation implementations for blow-up solutions.
Our arguments indicate that blow-up solutions correspond to {\em stable manifolds of asymptotically stable equilibria on $\partial M$}, which can be validated by standard techniques of dynamical systems with computer assistance.
We review a fundamental tool called {\em Lyapunov function}, which validates level surfaces around equilibria and is essential to estimate explicit enclosures of blow-up times.
%As a new concept, we provide validation of {\em Lyapunov functions on stable manifolds}, which are necessary to validate blow-up times of blow-up solutions asymptotic to unstable equilibria at infinity.
We conclude Section \ref{section-blow-up} by providing concrete validation steps for blow-up solutions.
Finally, we demonstrate several numerical validation examples of blow-up solutions in Section \ref{section-numerical}.

%
%	new section
%
\section{Quasi-homogeneous compactifications}
\label{section-QHC}

In this section, we introduce several compactifications of phase spaces which are appropriate for studying dynamics at infinity.
As an example of such appropriate ones, we define a {\em quasi-parabolic compactification}.
This compactification is an alternative of admissible, {\em homogeneous} ones discussed in e.g., \cite{EG2006}, and of quasi-Poincar\'{e} compactifications derived in \cite{Mat}.
Our present compactification is based on an appropriate scaling of vector-valued functions at infinity and quasi-homogeneous desingularization of singularities in dynamical systems (e.g., \cite{D1993}). Moreover, it overcomes the lack of smoothness of (transformed) vector fields at infinity mentioned later.
Firstly, we briefly review quasi-homogeneous vector fields.
Secondly, we introduce a class of compactifications called {\em (admissible) quasi-homogeneous compactifications}.
Thirdly, we define a quasi-parabolic compactification.
Finally, we review a well-known quasi-homogeneous (local) compactification which shall be called {\em directional} compactification.

%
%	new subsection
%
\subsection{Quasi-homogeneous vector fields}
\label{section-QH}

First of all, we review a class of vector fields in our present discussions.
\begin{dfn}[Quasi-homogeneous vector fields, e.g., \cite{D1993}]\rm
Let $f : \mathbb{R}^n \to \mathbb{R}$ be a smooth function.
Let $\alpha_1,\cdots, \alpha_n, k \geq 1$ be natural numbers.
We say that $f$ is a {\em homogeneous function of type $(\alpha_1,\cdots, \alpha_n)$ and order $k$} if
\begin{equation*}
f(r^{\alpha_1}x_1,\cdots, r^{\alpha_n}x_n) = r^k f(x_1,\cdots, x_n),\quad \forall x\in \mathbb{R}^n,\quad r\in \mathbb{R}.
\end{equation*}
\par
Next, let $X = \sum_{j=1}^n f_j(x)\frac{\partial }{\partial x_j}$ be a smooth vector field on $\mathbb{R}^n$.
We say that $X$, or simply $f = (f_1,\cdots, f_n)$ is a {\em quasi-homogeneous vector field of type $(\alpha_1,\cdots, \alpha_n)$ and order $k+1$} if each component $f_j$ is a homogeneous function of type $(\alpha_1,\cdots, \alpha_n)$ and order $k + \alpha_j$.
\end{dfn}

For applications to general vector fields, in particular for dynamics near infinity, we define the following notion.
\begin{dfn}[Asymptotically quasi-homogeneous vector fields at infinity, \cite{Mat}]\rm
Let $f = (f_1,\cdots, f_n):\mathbb{R}^n \to \mathbb{R}^n$ be a smooth function.
We say that $X = \sum_{j=1}^n f_j(x)\frac{\partial }{\partial x_j}$, or simply $f$ is an {\em asymptotically quasi-homogeneous vector field of type $(\alpha_1,\cdots, \alpha_n)$ and order $k+1$ at infinity} if
\begin{equation*}
\lim_{r\to +\infty} r^{-(k+\alpha_j)}\left\{ f_j(r^{\alpha_1}x_1, \cdots, r^{\alpha_n}x_n) - r^{k+\alpha_j}(f_{\alpha,k})_j(x_1, \cdots, x_n) \right\}
 = 0
 \end{equation*}
holds uniformly for $(x_1,\cdots, x_n)\in S^{n-1}$ for some quasi-homogeneous vector field  $f_{\alpha,k} = ((f_{\alpha,k})_1,\cdots, (f_{\alpha,k})_n)$ of type $(\alpha_1,\cdots, \alpha_n)$ and order $k+1$.
\end{dfn}

The asymptotic quasi-homogeneity at infinity plays a key role in consideration of (polynomial) vector fields at infinity, which is shown later.

%
%	new subsection
%
\subsection{Admissible quasi-homogeneous compactifications}
\label{section-definition}

Throughout successive sections, consider the (autonomous) polynomial vector field
\begin{equation}
\label{ODE-original}
y' = f(y),
\end{equation}
where $f : \mathbb{R}^n \to \mathbb{R}^n$ be a smooth function.
Throughout our discussions, we assume that $f$ is an asymptotically quasi-homogeneous vector field of type $\alpha = (\alpha_1,\cdots, \alpha_n)$ and order $k+1$ at infinity.

\begin{dfn}[Admissible quasi-homogeneous compactification]\rm
\label{dfn-qP}
Fix natural numbers $\alpha_1,\cdots, \alpha_n$.
Let $\beta_1,\cdots, \beta_n$ be natural numbers such that 
\begin{equation}
\label{LCM}
\alpha_1 \beta_1 = \alpha_2 \beta_2 = \cdots = \alpha_n \beta_n \equiv c \in \mathbb{N}.
\end{equation}
Define a functional $p(y)$ as
\begin{equation*}
p(y) := \left( y_1^{2\beta_1} +  y_2^{2\beta_2} + \cdots +  y_n^{2\beta_n} \right)^{1/2c}.
\end{equation*}
Define the mapping $T : \mathbb{R}^n \to \mathbb{R}^n$ as
\begin{equation*}
\quad T(y) = x,\quad x_i := \frac{y_i}{\kappa(y)^{\alpha_i}}.
\end{equation*}
We say that $T$ is an {\em (admissible) quasi-homogeneous compactification (of type $\alpha$)} if all the following conditions hold:
\begin{description}
\item[(A0)] $\kappa(y) > p(y)$ for all $y\in \mathbb{R}^n$,
\item[(A1)] $\kappa(y) \sim p(y)$ as $p(y)\to \infty$,
\item[(A2)] $\nabla \kappa(y) = ((\nabla \kappa(y))_1, \cdots, (\nabla \kappa(y))_n)$ satisfies
\begin{equation*}
(\nabla \kappa(y))_i \sim \frac{1}{\alpha_i}\frac{y_i^{2\beta_i-1}}{p(y)^{2c-1}}\quad \text{ as } p(y)\to \infty.
\end{equation*}
\item[(A3)] Letting $y_\alpha = (\alpha_1 y_1,\cdots, \alpha_n y_n)^T$ for $y\in \mathbb{R}^n$, we have $\langle y_\alpha, \nabla \kappa \rangle < \kappa(y)$ holds for any $y\in \mathbb{R}^n$.
\end{description}
\end{dfn}
The admissibility conditions (A0) $\sim$ (A3) come from fundamental properties of {\em quasi-Poincar\'{e} compactifications} $T = T_{qP}$ introduced in \cite{Mat}, which is defined by
$\kappa(y) = \left(1+p(y)^{2c}\right)^{1/2c}$.
By (\ref{LCM}), it immediately holds that $p(y)^{2c} = \kappa(y)^{2c}p(x)^{2c}$.
The condition (A1) indicates that $p(x)\to 1$ as $p(y)\to \infty$, and vice versa.
In particular, by the condition (A0), $T$ maps $\mathbb{R}^n$ into
\begin{equation*}
\mathcal{D} := \{x\in \mathbb{R}^n \mid p(x) < 1\}.
\end{equation*}
The infinity in the original coordinate then corresponds to a point on
\begin{equation*}
\mathcal{E} = \{x \in \mathbb{R}^n \mid p(x) = 1\}.
\end{equation*}
\begin{dfn}[cf. \cite{Mat}]\rm
We call the boundary $\mathcal{E}$ the {\em horizon}.
\end{dfn}

\begin{rem}\rm
The simplest choice of the natural number $c$ is the least common multiple of $\alpha_1,\cdots, \alpha_n$.
Once we choose such $c$, we can determine the $n$-tuples of natural numbers $\beta_1,\cdots, \beta_n$ uniquely.
The choice of natural numbers in (\ref{LCM}) is essential to desingularize vector fields at infinity, as shown below.
\end{rem}

The horizon determines directions where solution trajectories diverge.
\begin{dfn}\rm
We say that a solution orbit $y(t)$ of (\ref{ODE-original}) with the maximal existence time $(a,b)$, possibly $a = -\infty$ and $b = +\infty$, {\em tends to infinity in the direction $x_\ast \in \mathcal{E}$ associated with the quasi-Poncar\'{e} functional $p$} (as $t\to a+0$ or $b-0$) if
\begin{equation*}
p(y(t))\to \infty,\quad \left(\frac{y_1}{\kappa(y)^{\alpha_1}}, \cdots, \frac{y_n}{\kappa(y)^{\alpha_n}}\right)\to x_\ast\quad \text{ as }t\to a+0 \text{ or }b-0.
\end{equation*}
\end{dfn}

Now compute the Jacobian matrix of $T$ for verifying its bijectivity.
Direct computations yield
\begin{equation*}
\frac{\partial x_i}{\partial y_j} = \kappa^{-\alpha_i}\left( \delta_{ij} - \kappa^{-1} \alpha_i y_i \frac{\partial \kappa}{\partial y_j}\right) 
\end{equation*}
with the matrix form
\begin{align*}
&J = \left(\frac{\partial x_i}{\partial y_j}\right)_{i,j = 1,\cdots, n} = A_{\alpha} \left( I_n - \kappa^{-1} y_\alpha (\nabla \kappa)^T\right),\\
&A_\alpha = \diag(\kappa^{-\alpha_1}, \cdots, \kappa^{-\alpha_n}),\quad y_\alpha = (\alpha_1 y_1,\cdots, \alpha_n y_n)^T.
\end{align*}

We following arguments in \cite{EG2006}, for any (column) vectors $y,z\in \mathbb{R}^n$, to have 
\begin{align*}
(I_n + \beta yz^T)(I_n + \beta yz^T) &= I + (\beta + \delta)yz^T + \beta \delta yz^T yz^T\\
	&= I + (\beta + \delta + \beta \delta \langle z,y\rangle)yz^T,
\end{align*}
so $I+\delta yz^T = (I+\delta yz^T )^{-1}$ if $\delta = -\beta / (1 + \beta \langle z,y\rangle)$.
\par
In this case, we choose $\beta = -\kappa^{-1}, y = y_\alpha, z = \nabla \kappa$ and have
\begin{equation*}
\left(\frac{\partial y_j}{\partial x_i}\right) = \left(\frac{\partial x_i}{\partial y_j}\right)^{-1} =  \left( I_n - \frac{1}{\kappa - \langle y_\alpha, \nabla \kappa \rangle } y_\alpha (\nabla \kappa)^T\right)A_{\alpha}^{-1}
\end{equation*}
Now we have
\begin{equation*}
\frac{\partial \kappa}{\partial y_j} = \frac{\partial }{\partial y_j} \left(1+\sum_{i=1}^n y_i^{2\beta_i} \right)^{\frac{1}{2c}} = \frac{\beta_j}{c} \left(1+\sum_{i=1}^n y_i^{2\beta_i} \right)^{\frac{1}{2c}-1} y_j^{2\beta_j-1}
= \frac{\beta_j}{c\kappa^{2c-1}} y_j^{2\beta_j-1}
\end{equation*}
and hence
\begin{align*}
\kappa^{2c-1}\left(\kappa - \langle y_\alpha, \nabla \kappa \rangle \right) &= \kappa^{2c-1}\left(\kappa - \sum_{j=1}^n \alpha_j y_j \frac{\beta_j}{c\kappa^{2c-1}} y_j^{2\beta_j-1} \right) = \left\{ (1+p(y)^{2c}) - p(y)^{2c}\right\} > 0,
\end{align*}
which indicates that the transformation $T$ as well as $T^{-1}$ are $C^1$ locally bijective including $y=0$.
On the other hand, the map $T$ maps any one-dimensional curve $y = (r^{\alpha_1}v_1, \cdots, r^{\alpha_n}v_n)$, $0\leq r < \infty$, with some fixed direction $v\in \mathbb{R}^n$, into itself (cf.  \cite{Mat}).
For continuous mappings from $\mathbb{R}$ to $\mathbb{R}$, local bijectivity implies global bijectivity.
Consequently, (A3) guarantees also the global bijectivity of $T$.
Summarizing these arguments, we obtain the following proposition.
\begin{prop}
\label{prop-adm}
The functional $\kappa$ defining the admissible quasi-homogeneous compactificaton $T$ is a bijection from $\mathbb{R}^n$ onto $\mathcal{D} = \{x\in \mathbb{R}^n \mid p(x) < 1\}$.
%\item $T(\mathbb{R}^n)$ is extended continuously onto $\overline{\mathcal{D}}$.
\end{prop}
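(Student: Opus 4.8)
The plan is to show that the mapping $T$ determined by $\kappa$ is a $C^1$ diffeomorphism of $\mathbb{R}^n$ onto $\mathcal{D}$ (this is the content of the statement, phrased in terms of $\kappa$). I would split this into a \emph{local} part, that $T$ is everywhere a local $C^1$ diffeomorphism, and a \emph{global} part, that $T$ is injective with image exactly $\mathcal{D}$; the global part is obtained by restricting $T$ to the quasi-homogeneous rays through the origin, along which global bijectivity is one-dimensional and transparent.

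For the local part I would use the Jacobian factorization already recorded above, $J = A_\alpha(I_n - \kappa^{-1} y_\alpha (\nabla\kappa)^T)$, together with the Sherman--Morrison-type identity, which gives $J^{-1} = (I_n - (\kappa - \langle y_\alpha,\nabla\kappa\rangle)^{-1} y_\alpha(\nabla\kappa)^T)A_\alpha^{-1}$. This inverse is well defined precisely because condition (A3) guarantees $\kappa(y) - \langle y_\alpha,\nabla\kappa(y)\rangle > 0$ for every $y$ (in the quasi-Poincar\'e model this quantity equals $\kappa^{1-2c}$, as the displayed computation shows). Since (A0) forces $\kappa > p \geq 0$, hence $\kappa > 0$ everywhere, $T$ is $C^1$ with nowhere-vanishing Jacobian determinant, hence a local $C^1$ diffeomorphism, including at $y=0$.

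For the global part I would fix a direction $v$ on the horizon, $p(v) = 1$, and parametrize the corresponding ray by $\gamma_v(r) = (r^{\alpha_1}v_1,\dots,r^{\alpha_n}v_n)$, $r\in[0,\infty)$; the exponent relation (\ref{LCM}) gives $p(\gamma_v(r)) = r$, and a direct computation gives $T(\gamma_v(r)) = \gamma_v(\rho(r))$ with $\rho(r) = r/\kappa(\gamma_v(r))$, so $p(T(\gamma_v(r))) = \rho(r)$. Using $\frac{d}{dr}\kappa(\gamma_v(r)) = r^{-1}\langle y_\alpha,\nabla\kappa\rangle$ one finds $\rho'(r) = \kappa^{-2}(\kappa - \langle y_\alpha,\nabla\kappa\rangle) > 0$ by (A3), so $\rho$ is strictly increasing; moreover $\rho(0) = 0$ and, by (A1), $\rho(r) = r/\kappa(\gamma_v(r)) \to 1$ as $r\to\infty$, so by the intermediate value theorem $\rho$ is a homeomorphism of $[0,\infty)$ onto $[0,1)$. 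Since every nonzero point of $\mathbb{R}^n$ (resp.\ of $\mathcal{D}$) lies on exactly one such ray, these facts assemble as follows: $T$ is injective on $\mathbb{R}^n$, and $T$ maps the $v$-ray onto exactly the part of $\mathcal{D}$ in direction $v$; sweeping over all $v$ with $p(v)=1$ and adding the origin yields $T(\mathbb{R}^n) = \mathcal{D}$.

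The step I expect to be the crux is the passage from local to global injectivity: a local diffeomorphism of $\mathbb{R}^n$ need not be globally one-to-one when $n\geq 2$, so the argument genuinely depends on the ray decomposition, which in turn relies on (\ref{LCM}) (it is what makes $p(\gamma_v(r)) = r$, so that $T$ preserves each ray) and on (A3) (it is what makes $\rho$ monotone, upgrading fibrewise local bijectivity to fibrewise global bijectivity). Everything else is bookkeeping with the compactification identities already established, and the fact that the image is $\mathcal{D}$ rather than a larger or smaller set is exactly the content of (A0) and (A1).
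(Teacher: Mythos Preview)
Your proposal is correct and follows essentially the same route as the paper: the Jacobian factorization together with the Sherman--Morrison identity and (A3) for local invertibility, then the reduction to the quasi-homogeneous rays $\gamma_v$ for global bijectivity. Your treatment of the ray argument is in fact more explicit than the paper's---you compute $\rho'(r)=\kappa^{-2}(\kappa-\langle y_\alpha,\nabla\kappa\rangle)$ directly and invoke (A1) to pin down the image as $[0,1)$, whereas the paper simply observes that a continuous locally bijective map $\mathbb{R}\to\mathbb{R}$ is globally bijective and leaves the surjectivity onto $\mathcal{D}$ implicit.
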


%\begin{proof}
%1. See discussions above.
%\par
%2. 
%For any sequences $\{y_k\}_{k\geq 1}$ which tend to infinity in the direction $x_\ast$, the definition $\lim_{k\to \infty}T(y_k) \equiv x_\ast$ makes sense and shows the continuous extension of $T(\mathbb{R}^n)$ onto $\mathcal{D}\cup \{x_\ast\}$, since $x_k \equiv T(y_k) \to x_\ast$ and $p(y_k)\to \infty$ as $k\to \infty$; namely, $p(x_\ast) = 1$ and $x_\ast\in \overline{\mathcal{D}}$. 

%Since each $x_\ast\in \partial \mathcal{D}$ is an accumulation point, there is a sequence $\{x_j = (x_{ij})^T\}_{j\geq 1}$ converging to $x_\ast$.
%Letting $y_j \equiv (y_{ij})^T$ with $y_{ij} = \kappa^{\alpha_i}x_{ij}$, $i=1,\cdots, n$, for such a sequence, $\{y_j\}_{j\geq 1}$ tends to infinity in the direction $x_\ast$.
%This fact shows that $T(\mathbb{R}^n)$ is extended continuously onto $\overline{\mathcal{D}}$ and completes the proof.
%\end{proof}

Note that the above argument is completely parallel to arguments of bijectivity of the quasi-Poincar\'{e} compactification \cite{Mat}.
\par
Four properties (A0) $\sim $ (A3) in Definition \ref{dfn-qP} will play central roles in the theory of, which shall be called, {\em quasi-homogeneous compactifications} and associated dynamics. 
Indeed, in the case of {\em homogeneous} compactifications, namely $\alpha_1 = \cdots = \alpha_n = \beta_1 = \cdots = \beta_n = 1$, these conditions describe {\em admissibility} of compactifications \cite{EG2006}, which play central roles to dynamics at infinity.
The Poincar\'{e} compactification; namely the quasi-Poincar\'{e} compactification of type $(1,\cdots, 1)$, is the prototype of other admissible homogeneous compacifications such as parabolic ones (e.g., \cite{EG2006, TMSTMO}), and hence properties (A0) $\sim $ (A3) which quasi-Poincar\'{e} compactifications possess will be appropriate to define an \lq\lq admissible\rq\rq class of quasi-homogeneous compactifications.

%
%	new subsection
%
\subsection{Quasi-parabolic compactification}

Here we introduce an example of quasi-homogeneous compactifications other than quasi-Poincar\'{e} ones, which is an analogue of {\em parabolic compactifications} discussed in \cite{EG2006, TMSTMO}.
\par
Let the type $\alpha = (\alpha_1,\cdots, \alpha_n)\in \mathbb{Z}_{>0}^n$ fixed.
Let $\{\beta_i\}_{i=1}^n$ and $c$ be a collection of natural numbers satisfying (\ref{LCM}).
For any $x\in \mathcal{D}$, define $y\in \mathbb{R}^n$ by 
\begin{equation*}
S(x) = y,\quad y_j = \frac{x_j}{(1- p(x)^{2c})^{\alpha_j}},\quad j=1,\cdots, n.
\end{equation*}
Let $\tilde \kappa_\alpha(x) := (1-p(x)^{2c})^{-1}$, which satisfies $\tilde \kappa_\alpha(x) \geq 1$ for all $x\in \mathcal{D}$.
Moreover, $y\not = 0$ implies $\tilde \kappa_\alpha(x) > 1$.
We also have 
\begin{equation*}
p(y)^{2c} = \tilde \kappa_\alpha(x)^{2c}p(x)^{2c} = \tilde \kappa_\alpha(x)^{2c}\left(1-\frac{1}{\tilde \kappa_\alpha(x)}\right).
\end{equation*}
This equality indicates that $p(y) = p(S(x)) < \tilde \kappa_\alpha(x)$ holds for all $x\in \mathcal{D}$.

\begin{lem}
\label{lem-zero-compactification}
Let $F_y(\kappa) := \kappa^{2c} - \kappa^{2c-1} - p(y)^{2c}$.
Then, for any fixed $y\in \mathbb{R}^n\setminus \{0\}$, $F_y$ has the unique zero in $\{\kappa > \max \{1,p(y)\}\}$.
\end{lem}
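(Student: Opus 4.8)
The plan is to treat $F_y$ as a one–variable real function on $[1,\infty)$ and combine the intermediate value theorem with strict monotonicity. First I would rewrite
\[
F_y(\kappa) = \kappa^{2c-1}(\kappa-1) - p(y)^{2c},
\]
and record that $y\neq 0$ forces $p(y)>0$, so that $p(y)^{2c}>0$; this strict positivity is what ultimately pushes the zero strictly to the right of $\max\{1,p(y)\}$.

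Next I would evaluate $F_y$ at the left endpoint $\kappa_0 := \max\{1,p(y)\}$. If $p(y)\le 1$ then $\kappa_0=1$ and $F_y(1) = -p(y)^{2c}<0$; if $p(y)>1$ then $\kappa_0=p(y)$ and $F_y(p(y)) = p(y)^{2c}-p(y)^{2c-1}-p(y)^{2c} = -p(y)^{2c-1}<0$. In either case $F_y(\kappa_0)<0$. On the other hand $F_y(\kappa)\to+\infty$ as $\kappa\to+\infty$, since the leading term $\kappa^{2c}$ dominates. Continuity of $F_y$ and the intermediate value theorem then give at least one zero of $F_y$ in the open interval $(\kappa_0,\infty)$, and the strict inequality $F_y(\kappa_0)<0$ (which is where the hypothesis $y\neq 0$ enters) ensures this zero lies strictly in $\{\kappa>\max\{1,p(y)\}\}$.

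For uniqueness I would differentiate:
\[
F_y'(\kappa) = 2c\,\kappa^{2c-1} - (2c-1)\kappa^{2c-2} = \kappa^{2c-2}\bigl(2c\,\kappa - (2c-1)\bigr).
\]
Since $c\ge 1$ by (\ref{LCM}), for every $\kappa\ge 1$ we have $\kappa^{2c-2}>0$ and $2c\kappa-(2c-1)\ge 2c-(2c-1)=1>0$, hence $F_y'(\kappa)>0$ on all of $[1,\infty)\supseteq[\kappa_0,\infty)$. Thus $F_y$ is strictly increasing there, so it has at most one zero on that interval; together with the existence above, the zero in $\{\kappa>\max\{1,p(y)\}\}$ is unique.

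I do not expect a serious obstacle: the proof is elementary calculus. The only points requiring mild care are the case split at the endpoint $\kappa_0$ to confirm $F_y(\kappa_0)<0$ — where $y\neq 0$, hence $p(y)>0$, is essential — and verifying that the sign analysis of $F_y'$ survives the degenerate case $c=1$, in which $\kappa^{2c-2}=\kappa^0=1$; both are immediate.
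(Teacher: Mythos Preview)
Your proof is correct and follows essentially the same approach as the paper: negativity of $F_y$ at the left endpoint $\max\{1,p(y)\}$, positivity for large $\kappa$, the intermediate value theorem for existence, and strict monotonicity via $F_y'>0$ on $[1,\infty)$ for uniqueness. The only cosmetic difference is that the paper explicitly constructs a point where $F_y>0$ (with a two-case choice of $\kappa$), whereas you invoke $F_y(\kappa)\to+\infty$ directly; your version is slightly more streamlined but not substantively different.
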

\begin{proof}
Observe that $F_y(1) = -p(y)^{2c} < 0$ and $F_y(p(y)) = -p(y)^{2c-1} < 0$.
Moreover,
\begin{equation*}
\frac{dF_y}{d\kappa}(\kappa) = 2c\kappa^{2c-1} - (2c-1)\kappa^{2c-2} > 2c\kappa^{2c-2}(\kappa - 1) \geq 0,
\end{equation*}
which shows that $F_y$ is strictly increasing in $\{\kappa \geq 1\}$.
Here consider the following two cases.
\begin{description}
\item[Case 1 : $0 < p(y) <1$.] 
\end{description}
Letting $\kappa = c_y$ with a constant $c_y\geq 1$, we have
\begin{equation*}
F_y(\kappa) = c_y^{2c} - c_y^{(2c-1)} -p(y)^{2c} > c_y^{2c} - c_y^{(2c-1)} -1,
\end{equation*}
which can be positive by choosing $c_y$ sufficiently large.

\begin{description}
\item[Case 2 : $p(y) \geq 1$.] 
\end{description}
Letting $\kappa = c_y p(y)^{2c/(2c-1)}$ with a constant $c_y$, we have
\begin{equation*}
F_y(\kappa) = c_y^{2c} p(y)^{4c^2/(2c-1)} - (c_y^{(2c-1)}+1)p(y)^{2c} \geq \{c_y^{2c} - (c_y^{(2c-1)}+1) \} p(y)^{2c}
\end{equation*}
Choosing $c_y$ sufficiently large, we obtain $c_y^{2c} - (c_y^{(2c-1)}+1) >0$, which implies $F_y(\kappa) > 0$.
\par
In both cases, the intermediate theorem can be applied to the existence of unique zero of $F_y$.
\end{proof}
The above lemma determines the unique zero $\kappa(y)$ such that $F_y(\kappa(y)) = 0$.
In particular, $\tilde \kappa_\alpha(x)$ satisfies $F_y(\tilde \kappa_\alpha(x)) = 0$.
By the uniqueness of zero and the definition of $S$, for any $y\in \mathbb{R}^n\setminus \{0\}$, define
\begin{equation*}
\kappa(y) \equiv \kappa(S(x)) := \tilde \kappa_\alpha(x).
\end{equation*}
The implicit function theorem for $F_y(\kappa)=0$ shows that $\kappa(y)$ is a smooth function of $p$, and therefore of $y$ including $y=0$ as $\kappa(0)\equiv 1$.
We are then ready to the new compactification mapping $y$ to $x$.

\begin{dfn}[Quasi-parabolic compactification]\rm
Let the type $\alpha = (\alpha_1,\cdots, \alpha_n)\in \mathbb{Z}_{>0}^n$ fixed.
Let $\{\beta_i\}_{i=1}^n$ and $c$ be a collection of natural numbers satisfying (\ref{LCM}).
Define $T_{para}:\mathbb{R}^n\to \mathcal{D}$ as
\begin{equation*}
T_{para}(y) := x,\quad x_i = \frac{y_i}{\kappa(y)^{\alpha_i}},
\end{equation*}
where $\kappa(y) = \tilde \kappa_\alpha(x)$ is the unique zero of $F_y(\kappa)=0$ given in Lemma \ref{lem-zero-compactification}.
We say $T_{para}$ the {\em quasi-parabolic compactification (with type $\alpha$)}.
\end{dfn}

\begin{thm}
Let the type $\alpha = (\alpha_1,\cdots, \alpha_n)\in \mathbb{Z}_{>0}^n$ fixed.
Let $\{\beta_i\}_{i=1}^n$ and $c$ be a collection of natural numbers satisfying (\ref{LCM}).
Then the quasi-parabolic compactification $T_{para}$ is an admissible quasi-homogeneous compactification.
In particular, $T_{para}^{-1}=S$.
\end{thm}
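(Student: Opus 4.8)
The plan is to verify the four admissibility conditions (A0)--(A3) of Definition~\ref{dfn-qP} for the functional $\kappa$ that Lemma~\ref{lem-zero-compactification} supplies as the unique zero of $F_y(\kappa)=\kappa^{2c}-\kappa^{2c-1}-p(y)^{2c}$ in $\{\kappa>\max\{1,p(y)\}\}$ (with $\kappa(0)\equiv1$), and then to identify $T_{para}^{-1}$ with $S$. Everything will be derived from the defining identity $\kappa(y)^{2c}-\kappa(y)^{2c-1}=p(y)^{2c}$, from the hypothesis $\alpha_i\beta_i=c$ in \eqref{LCM}, and from the smoothness of $\kappa$ as a function of $p(y)^{2c}=\sum_j y_j^{2\beta_j}$, hence of $y$ on all of $\mathbb{R}^n$, which was already recorded after Lemma~\ref{lem-zero-compactification}. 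Condition (A0) is then immediate, being exactly the location of the zero asserted by Lemma~\ref{lem-zero-compactification} ($\kappa(y)>p(y)$ for $y\neq0$, and $\kappa(0)=1>0=p(0)$).

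For (A1) I would rewrite the defining identity as $\kappa^{2c}(1-\kappa^{-1})=p(y)^{2c}$; since $\kappa(y)>p(y)$ forces $\kappa(y)\to\infty$ as $p(y)\to\infty$, this gives $\kappa(y)/p(y)=(1-\kappa(y)^{-1})^{-1/2c}\to1$. For (A2), implicit differentiation of $F_y(\kappa)=0$, using $\partial F_y/\partial\kappa=\kappa^{2c-2}(2c\kappa-(2c-1))>0$ on $\{\kappa\ge1\}$ (the positivity already exploited in the proof of Lemma~\ref{lem-zero-compactification}) and $\partial F_y/\partial y_i=-2\beta_i y_i^{2\beta_i-1}$, yields
\begin{equation*}
(\nabla\kappa(y))_i=\frac{2\beta_i\,y_i^{2\beta_i-1}}{\kappa^{2c-2}(2c\kappa-(2c-1))}.
\end{equation*}
Dividing this by $\frac{1}{\alpha_i}\frac{y_i^{2\beta_i-1}}{p(y)^{2c-1}}=\frac{\beta_i}{c}\frac{y_i^{2\beta_i-1}}{p(y)^{2c-1}}$ (here $1/\alpha_i=\beta_i/c$) leaves the ratio $\frac{2c\,p(y)^{2c-1}}{\kappa^{2c-2}(2c\kappa-(2c-1))}$, which is a function of $p(y)$ alone; since $\kappa^{2c-2}(2c\kappa-(2c-1))\sim2c\kappa^{2c-1}$ and $\kappa\sim p(y)$ by (A1), this ratio tends to $1$ as $p(y)\to\infty$, giving (A2), and the fact that it depends only on $p(y)$ is what makes the limit automatically uniform on $S^{n-1}$.

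For (A3), the same formula for $\nabla\kappa$ gives
\begin{equation*}
\langle y_\alpha,\nabla\kappa\rangle=\frac{2\sum_i\alpha_i\beta_i\,y_i^{2\beta_i}}{\kappa^{2c-2}(2c\kappa-(2c-1))}=\frac{2c\,p(y)^{2c}}{\kappa^{2c-2}(2c\kappa-(2c-1))}=\frac{2c\kappa(\kappa-1)}{2c\kappa-(2c-1)},
\end{equation*}
where I used $\alpha_i\beta_i=c$, then $\sum_i y_i^{2\beta_i}=p(y)^{2c}$, and finally the defining identity as $p(y)^{2c}=\kappa^{2c-1}(\kappa-1)$. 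Since $2c\kappa-(2c-1)>0$ for $\kappa\ge1$, the inequality $\langle y_\alpha,\nabla\kappa\rangle<\kappa$ reduces to $2c(\kappa-1)<2c\kappa-(2c-1)$, i.e.\ to $0<1$; hence (A3) holds strictly for every $y$ (at $y=0$ the left side is $0<1=\kappa$). With (A0)--(A3) established, Proposition~\ref{prop-adm} shows $\kappa$ is a bijection onto $\mathcal{D}$.

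It remains to check $T_{para}^{-1}=S$ (the case $y=0\leftrightarrow x=0$ being trivial). For $x\in\mathcal{D}\setminus\{0\}$ and $y=S(x)$, the relations recorded around Lemma~\ref{lem-zero-compactification} give $F_y(\tilde\kappa_\alpha(x))=0$ with $\tilde\kappa_\alpha(x)>\max\{1,p(y)\}$, so uniqueness of the zero forces $\kappa(S(x))=\tilde\kappa_\alpha(x)$, whence $T_{para}(S(x))_i=S(x)_i\,\tilde\kappa_\alpha(x)^{-\alpha_i}=x_i$; conversely, for $y\in\mathbb{R}^n$ and $x=T_{para}(y)$ the definition of $\kappa$ gives $\tilde\kappa_\alpha(x)=\kappa(y)$, hence $S(T_{para}(y))_j=x_j\,\kappa(y)^{\alpha_j}=y_j$. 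Together with $T_{para}(\mathbb{R}^n)\subseteq\mathcal{D}$, which follows from (A0) and the identity $p(y)^{2c}=\kappa(y)^{2c}p(x)^{2c}$, this proves $S=T_{para}^{-1}$. I do not expect any genuine obstacle here; the one place to be slightly careful is the value $y=0$, where $\kappa=1$ and $\nabla\kappa=0$ so that all four conditions are trivial, together with making explicit (as above) that the error terms in (A1)--(A2) depend on $y$ only through $p(y)$, which is why their asymptotics are uniform.
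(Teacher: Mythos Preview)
Your proof is correct and follows essentially the same route as the paper: verify (A0)--(A3) directly from the defining identity $\kappa^{2c}-\kappa^{2c-1}=p(y)^{2c}$ via implicit differentiation, then check both compositions $S\circ T_{para}$ and $T_{para}\circ S$ using the uniqueness of the zero in Lemma~\ref{lem-zero-compactification}. Your handling of (A3) is in fact a bit cleaner---substituting $p(y)^{2c}=\kappa^{2c-1}(\kappa-1)$ to reduce the inequality to $0<1$---whereas the paper reaches the same conclusion through the auxiliary quantity $G(y)=2c\kappa^{2c}-(2c-1)\kappa^{2c-1}-2c\,p(y)^{2c}>2c\,F_y(\kappa)=0$; but the underlying computation is identical.
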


\begin{proof}
For $\kappa \geq \max \{1,p(y)\}$, we have
\begin{equation*}
\kappa(y)^{2c} - p(y)^{2c} = \kappa(y)^{2c-1} > 0,
\end{equation*}
which is (A0).
\par
From the identity $p(y)^{2c} = \kappa(y)^{2c}p(x)^{2c}$, and $p(y) \to \infty$ as $p(x)\to 1$ and vice versa by the definition of $S$, we have $p(y) / \kappa(y) \to 1$ as $p(y)\to \infty$, which shows (A1).
\par
Differentiating the identity $F_y(\kappa) = \kappa^{2c} - \kappa^{2c-1} - p^{2c}\equiv 0$ with respect to $p$, we obtain
\begin{equation*}
2c\kappa^{2c-1}\frac{d\kappa}{dp} - (2c-1)\kappa^{2c-2}\frac{d\kappa}{dp} =  2c p^{2c-1},
\end{equation*}
namely,
\begin{equation*}
\frac{d\kappa}{dp} = \frac{2c p^{2c-1}}{2c\kappa^{2c-1} - (2c-1)\kappa^{2c-2}}.
\end{equation*}
The denominator of the right-hand side is positive for $\kappa \geq 1$.
Since $p = p(y)$ is smooth, then $\kappa = \kappa(y)$ can be regarded as the composition of smooth functions $p=p(y)$ and $\kappa = \kappa(p)$.
In particular, $\kappa=\kappa(y) \equiv \kappa(p(y))$ is $C^1$ with respect to $y$ and thus
\begin{align*}
(\nabla_y \kappa(y))_j &= \frac{d\kappa}{dp}\frac{\partial p}{\partial y_j} = \frac{2c p^{2c-1}}{2c\kappa^{2c-1} - (2c-1)\kappa^{2c-2}}\cdot \frac{1}{2c} p(y)^{1-2c} \cdot 2\beta_j y_j^{2\beta_j-1}\\
	&= \frac{2\beta_j y_j^{2\beta_j-1}}{2c\kappa^{2c-1} - (2c-1)\kappa^{2c-2}} = \frac{2\beta_j y_j^{2\beta_j-1}}{2c\kappa^{2c-1} \left(1- \frac{2c-1}{2c}\kappa^{-1}\right)}.
\end{align*}
By (A1), we have
\begin{equation*}
(\nabla_y \kappa(y))_j \sim \frac{2\beta_j y_j^{2\beta_j-1}}{2cp(y)^{2c-1}} = \frac{y_j^{2\beta_j-1}}{\alpha_j p(y)^{2c-1}}\quad \text{ as }\quad p(y)\to \infty,
\end{equation*}
which shows (A2).
\par
Next, check (A3).
We have
\begin{equation*}
\langle y_\alpha, \nabla_y \kappa\rangle = \sum_{j=1}^n \alpha_j y_j \frac{2\beta_j y_j^{2\beta_j-1}}{2c\kappa^{2c-1} \left(1- \frac{2c-1}{2c}\kappa^{-1}\right)} = \frac{2c p(y)^{2c}}{2c\kappa^{2c-1} \left(1- \frac{2c-1}{2c}\kappa^{-1}\right)}
\end{equation*}
and it is sufficient to show $\kappa \left\{2c\kappa^{2c-1} \left(1- \frac{2c-1}{2c}\kappa^{-1}\right)\right\} > 2c p(y)^{2c}$ for our statement.
Let
\begin{equation*}
G(y) := \kappa \left\{2c\kappa^{2c-1} \left(1- \frac{2c-1}{2c}\kappa^{-1}\right)\right\} - 2c p(y)^{2c}.
\end{equation*}
Then
\begin{equation*}
G(y) = 2c\kappa^{2c} - (2c-1)\kappa^{2c-1} - 2cp(y)^{2c} > 2c(\kappa^{2c} - \kappa^{2c-1} - p(y)^{2c}) = 0
\end{equation*}
and we obtain (A3).
\par
As a consequence, $T_{para}$ is an admissible quasi-homogeneous compactification.
In particular, $T_{para}:\mathbb{R}^n\to \mathcal{D}$ is a surjective $C^1$-diffeomorphism by Proposition \ref{prop-adm}.
Observe that 
\begin{align*}
S\circ T_{para}(y) &= S\left(\frac{y_1}{\kappa(y)^{\alpha_1}}, \cdots, \frac{y_n}{\kappa(y)^{\alpha_n}}\right)\\
	&= \left(\frac{y_1}{\kappa(y)^{\alpha_1}(1-\bar p^{2c})^{\alpha_1}}, \cdots, \frac{y_n}{\kappa(y)^{\alpha_n}(1-\bar p^{2c})^{\alpha_n}}\right)\\
	&=(y_1,\cdots, y_n) \equiv y,
\end{align*}
where 
\begin{equation}
\label{identity-kappa-p}
\bar p^{2c} = \frac{p(y)^{2c}}{\kappa(y)^{2c}} = p(x)^{2c}\quad \text{ and }\quad (1-\bar p^{2c})^{-1} = \tilde \kappa_\alpha(x) \equiv \kappa(y).
\end{equation}
Similarly,
\begin{align*}
T_{para}\circ S(x) &= T_{para}\left(\frac{x_1}{(1-p(x)^{2c})^{\alpha_1}}, \cdots, \frac{x_n}{(1-p(x)^{2c})^{\alpha_n}}\right)\\
	&= \left(\frac{x_1}{(1-p(x)^{2c})^{\alpha_1}\kappa(y)^{\alpha_1}}, \cdots, \frac{x_n}{(1-p(x)^{2c})^{\alpha_n}\kappa(y)^{\alpha_n}}\right)\\
	&=(x_1,\cdots, x_n)\equiv x,
\end{align*}
which follows from the identity (\ref{identity-kappa-p}).
Consequently, $S=T_{para}^{-1}$ holds and the proof is completed.
\end{proof}

\begin{rem}
The name {\em quasi-\lq\lq parabolic\rq\rq} of $T_{para}$ comes from the homogeneous parabolic-type compactification; namely, $T_{para}$ with $(\alpha_1,\cdots, \alpha_n) = (1,\cdots, 1)$ and $c=1$.
In the homogeneous case, $T_{para}$ is the composite of the mapping from $\mathbb{R}^n$ to a parabolic hypersurface $\{x_1^2 + \cdots + x_n^2 = x_{n+1}\}\subset \mathbb{R}^{n+1}$ and the projection $(x_1,\cdots, x_n, x_{n+1})\mapsto (x_1,\cdots, x_n)$.
In the homogeneous case $\alpha = (1,\cdots, 1)$ and $c=1$, $\kappa = \kappa(y)$ is explicitly given as
$\kappa(y) = \frac{1}{2}\left(1 + \sqrt{1+4\sum_{i=1}^n y_i^2}\right)$, which is also calculated from $F_y(\kappa) = 0$.
See \cite{EG2006, TMSTMO} for details.
Illustrations of parabolic and quasi-parabolic compactifications in two-dimensional situations are shown in Figure \ref{fig-quasi-poincare}.
\end{rem}

The quasi-parabolic compactification is an nontrivial example of admissible quasi-homogeneous compactifications.
The biggest difference from quasi-Poincar\'{e} compactification is that the functional $\tilde \kappa_\alpha(x)$ does not contain any radicals.
This property unconditionally guarantees {\em the $C^1$ smoothness of the desingularized vector field  of good $f$ on $\overline{\mathcal{D}}$}.
In particular, {\em the stability analysis at infinity} is available.
Details are discussed in Section \ref{section-dyn-infty}.

\begin{figure}[htbp]\em
\begin{minipage}{0.5\hsize}
\centering
\includegraphics[width=7.0cm]{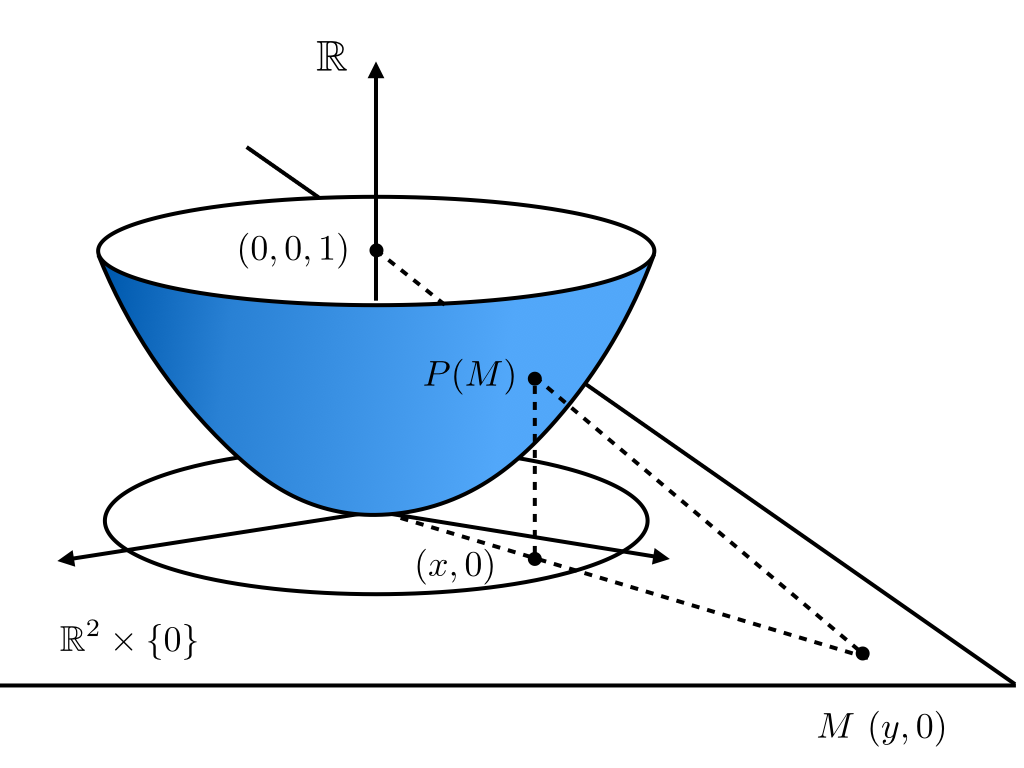}
(a)
\end{minipage}
\begin{minipage}{0.5\hsize}
\centering
\includegraphics[width=7.0cm]{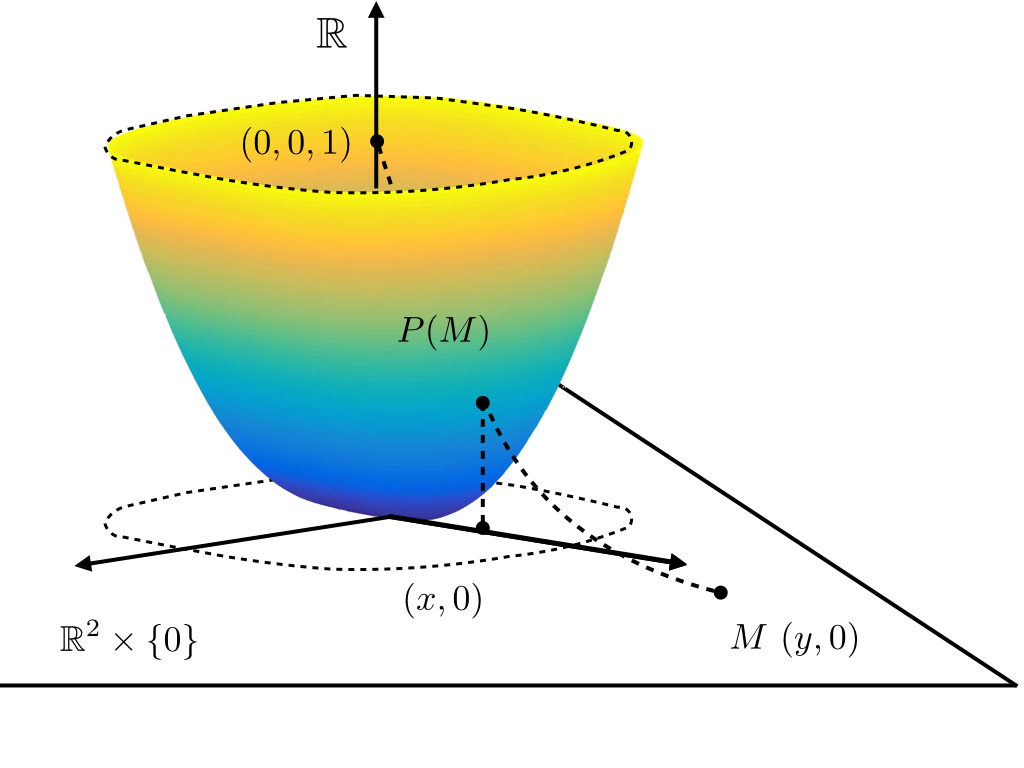}
(b)
\end{minipage}
\caption{Parabolic and quasi-parabolic compactifications with type $(2,1)$ for $\mathbb{R}^2$}
\label{fig-quasi-poincare}
Surfaces drawn here are (a) : $\mathcal{H} = \{(y_1,y_2,\zeta)\mid  y_1^2 + y_2^2 = \zeta\}$ ({\em parabolic compactification}), and (b) : $\mathcal{H}_\alpha = \{(y_1,y_2,\zeta)\mid y_1^2 + y_2^4 = \zeta\}$ ({\em quasi-parabolic compactification with type $(2,1)$}).
%\par
%(a) : .
%\par
%(b) : .
\par
In both figures, the original phase space corresponds to $\mathbb{R}^2\times \{0\}\subset \mathbb{R}^{2+1}$ in the extended space.
In the case of (a), the type $\alpha$ is chosen to be $(1,1)$.
The point $P(M)$ show the intersection point between $(0, 0,1)$ and the given point $M\in \mathbb{R}^2$ on $\mathcal{H}$ and $\mathcal{H}_\alpha$ respectively, through the curve $C_\alpha(y) = \{((1-\zeta)^{\alpha_1} y_1, (1-\zeta)^{\alpha_2} y_2, \zeta)\}$.
Note that the curve $C_\alpha$ is just a straight line in the case of homogeneous compactification $\alpha = (1,1)$.
The projections of $P(M)$ onto the original phase space; $(x,0)$, are the images of (quasi-)parabolic compactifications, respectively.
These observations can be easily generalized to $\mathbb{R}^n$.
\end{figure}

%
%	new subsection
%
\subsection{Directional compactifications}
\label{section-dir}

There are several other compactifications reflecting (asymptotic) quasi-homogeneity of vector fields at infinity.
For example, the transform $y = (y_1,\cdots, y_n) \mapsto (s,x) \equiv (s, x_1,\cdots, x_{i-1}, x_{i+1}, \cdots, x_n)$ given by
\begin{equation}
\label{dir-cpt}
y_j = \frac{x_j}{s^{\alpha_j}}\quad (j\not = i),\quad y_i = \pm \frac{1}{s^{\alpha_i}}
\end{equation}
is a kind of compactifications, which corresponds the infinity to the subspace $\{s=0\}\equiv \mathcal{E}$.
We shall call such a compactification a {\em directional compactification with the type $\alpha = (\alpha_1,\cdots, \alpha_n)$}, according to \cite{Mat}.
The set $\mathcal{E} = \{s=0\}$ is called {\em the horizon}.
This compactification is geometrically characterized as a local coordinate of quasi-Poincar\'{e} hemisphere of type $\alpha$:
\begin{equation*}
\mathcal{H}_\alpha := \left\{(y_1,\cdots, y_n, s)\in \mathbb{R}^{n+1}\mid \frac{1}{(1+p(y)^{2c})} \sum_{i=1}^n y_i^{2\beta_i} + s^{2c} = 1\right\},
\end{equation*}
at $(x_1, \cdots, x_n, s) = (0,\cdots, 0, x_i = \pm 1, 0, \cdots, 0, 0)$. See \cite{Mat} for details.
Note that, unlike admissible quasi-homogeneous compactifications in Definition \ref{dfn-qP}, the coordinate representation (\ref{dir-cpt}) only makes sense in $\{\pm y_i > 0\}$, in which sense directional compactifications are {\em local} ones.
In particular, whenever we consider trajectories whose $y_i$-component can change the sign, we have to take care of transformations among coordinate neighborhoods, which is quite tough for numerical integration of differential equations.
Nevertheless, this compactification is still a very powerful tool {\em if we consider solutions near infinity whose $y_i$-component is known a priori to have identical sign}.

%
%	new section
%
\section{Compactifications and dynamics at infinity}
\label{section-dyn-infty}

In this section, we calculate the vector field (\ref{ODE-original}) under the admissible quasi-homogeneous compactification $T$.
Regard $\kappa$ in the definition of $T$ as a function of $y$.
Integers $\{\beta_i\}_{i=1}^n$ and $c$ in the definition of $T$ are assumed to satisfy (\ref{LCM}).
Differentiating $x = T(y)$ with respect to $t$, we have 

\begin{align*}
x_i' &= \left(\frac{y_i}{\kappa^{\alpha_i}}\right)' = \frac{y_i'}{\kappa^{\alpha_i}} -  \frac{\alpha_i y_i \kappa^{\alpha_i-1}}{\kappa^{2\alpha_i}}\kappa' \\
	&= \frac{y_i'}{\kappa^{\alpha_i}} -  \frac{\alpha_i y_i }{\kappa^{\alpha_i+1}} \langle \nabla \kappa, y'\rangle\\
	&= \frac{f_i(y)}{\kappa^{\alpha_i}} -  \frac{\alpha_i y_i }{\kappa^{\alpha_i+1}} \langle \nabla \kappa, f(y)\rangle.
\end{align*}
Namely,
\begin{equation}
\label{ODE-poincare}
x' = A_\alpha \left(f_i(y) - \kappa^{-1}\langle f, \nabla \kappa \rangle y_\alpha \right)
\end{equation}

We have the one-to-one correspondence of {\em bounded} equilibria, which helps us with detecting dynamics at infinity.
\begin{prop}
The quasi-homogeneous compactification $T$ maps bounded equilibria of (\ref{ODE-original}) in $\mathbb{R}^n$ into equilibria of (\ref{ODE-poincare}) in $\mathcal{D}$, and vice versa. 
\end{prop}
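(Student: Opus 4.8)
The plan is to exploit the explicit form of the transformed vector field in \eqref{ODE-poincare}, namely
\[
x' = A_\alpha\left(f(y) - \kappa^{-1}\langle f(y),\nabla\kappa\rangle\, y_\alpha\right),
\]
together with the bijectivity of $T$ (Proposition \ref{prop-adm}) and the fact that $T$ and $T^{-1}$ are $C^1$ on all of $\mathbb{R}^n$, respectively on $\mathcal{D}$. Since $T$ is a $C^1$-diffeomorphism of $\mathbb{R}^n$ onto $\mathcal{D}$, a point $y_\ast\in\mathbb{R}^n$ with $y_\ast = T^{-1}(x_\ast)$ is mapped to $x_\ast = T(y_\ast)$, and conversely; bounded equilibria in $\mathbb{R}^n$ correspond bijectively to points of $\mathcal{D}$. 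So the content of the proposition is purely that the transformed vector field \eqref{ODE-poincare} vanishes at $x_\ast$ if and only if $f$ vanishes at $y_\ast$.

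First I would establish the forward direction: if $f(y_\ast)=0$, then every entry of the right-hand side of \eqref{ODE-poincare} contains $f(y_\ast)$ as a factor — both the term $f(y)$ and the term $\kappa^{-1}\langle f,\nabla\kappa\rangle y_\alpha$ — so $x'|_{x=x_\ast}=0$ and $x_\ast$ is an equilibrium of \eqref{ODE-poincare}. Here I would note that $A_\alpha = \diag(\kappa^{-\alpha_1},\dots,\kappa^{-\alpha_n})$ is finite and invertible at any $y_\ast\in\mathbb{R}^n$ (indeed $\kappa(y_\ast)\geq p(y_\ast)\geq 0$ and $\kappa$ is finite away from the horizon, and $y_\ast$ is bounded), so no degeneracy of $A_\alpha$ intervenes.

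For the converse, suppose $x_\ast\in\mathcal{D}$ is an equilibrium of \eqref{ODE-poincare}, and set $y_\ast := T^{-1}(x_\ast)\in\mathbb{R}^n$, which is well-defined and finite by Proposition \ref{prop-adm}. Since $A_\alpha$ is invertible at $y_\ast$, the vanishing of $x'$ forces
\[
f(y_\ast) = \kappa(y_\ast)^{-1}\langle f(y_\ast),\nabla\kappa(y_\ast)\rangle\, (y_\ast)_\alpha,
\]
i.e. $f(y_\ast)$ is a scalar multiple of $(y_\ast)_\alpha$, say $f(y_\ast)=\lambda (y_\ast)_\alpha$ with $\lambda = \kappa(y_\ast)^{-1}\langle f(y_\ast),\nabla\kappa(y_\ast)\rangle$. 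Taking the inner product of this identity with $\nabla\kappa(y_\ast)$ gives $\langle f(y_\ast),\nabla\kappa(y_\ast)\rangle = \lambda\langle (y_\ast)_\alpha,\nabla\kappa(y_\ast)\rangle = \lambda\,\kappa(y_\ast)\lambda$, so $\lambda\kappa(y_\ast) = \lambda\,\langle (y_\ast)_\alpha,\nabla\kappa(y_\ast)\rangle$, whence $\lambda\bigl(\kappa(y_\ast) - \langle (y_\ast)_\alpha,\nabla\kappa(y_\ast)\rangle\bigr) = 0$. By admissibility condition (A3), $\kappa(y_\ast) - \langle (y_\ast)_\alpha,\nabla\kappa(y_\ast)\rangle > 0$, so $\lambda = 0$ and therefore $f(y_\ast) = 0$: $y_\ast$ is a bounded equilibrium of \eqref{ODE-original}. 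The main obstacle — and the place where admissibility is genuinely used — is precisely this last step: ruling out the possibility that $f(y_\ast)$ is a nonzero vector proportional to $(y_\ast)_\alpha$, which is exactly what (A3) prevents. The rest is bookkeeping with the diffeomorphism property already proved.
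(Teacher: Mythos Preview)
Your argument is correct and is essentially the same as the paper's: both directions match, and for the converse you take the inner product with $\nabla\kappa$ and invoke (A3) to kill the scalar, exactly as the paper does. (There is a small typo where you write ``$=\lambda\,\kappa(y_\ast)\lambda$''; the trailing $\lambda$ should be deleted, but the next line recovers the intended identity $\lambda\kappa(y_\ast)=\lambda\langle (y_\ast)_\alpha,\nabla\kappa(y_\ast)\rangle$.)
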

\begin{proof}
Suppose that $y_\ast$ is an equilibrium of (\ref{ODE-original}), i.e., $f(y_\ast) = 0$.
Then the right-hand side of (\ref{ODE-poincare}) obviously vanishes at the corresponding $x_\ast$.
\par
Conversely, suppose that the right-hand side of (\ref{ODE-poincare}) vanishes at a point $x\in \mathcal{D}, p(x) < 1$: namely,
\begin{equation*}
f(\kappa x) - \kappa(y)^{-1}\langle \nabla \kappa, f(\kappa x) \rangle y_\alpha = 0.
\end{equation*}
Multiplying $\nabla \kappa$, we have
\begin{equation*}
\langle \nabla \kappa, f(\kappa x) \rangle \left(1- \kappa(y)^{-1}\langle \nabla \kappa, y_\alpha \rangle \right) = 0.
\end{equation*}
Due to (A3), we have $|\kappa(y)^{-1}\langle \nabla \kappa, y_\alpha \rangle| < 1$ and hence $\langle \nabla \kappa, f(\kappa x) \rangle = 0$.
Thus we have $f(y) = f(\kappa x) = 0$ by the assumption.
\end{proof}

Next we discuss the dynamics at infinity. Denoting
\begin{equation}
\label{f-tilde}
\tilde f_j(x_1,\cdots, x_n) := \kappa^{-(k+\alpha_j)} f_j(\kappa^{\alpha_1}x_1, \cdots, \kappa^{\alpha_n}x_n),\quad j=1,\cdots, n,
\end{equation}
we have
\begin{align}
\label{vectorfield-cw}
\notag
x_i' &= \frac{\kappa^{k+\alpha_i} \tilde f_i(x)}{\kappa^{\alpha_i}} -  \frac{\alpha_i \kappa^{\alpha_i}x_i }{\kappa^{\alpha_i+1}} \sum_{j=1}^n (\nabla \kappa)_j \kappa^{k+\alpha_j}\tilde f_j(x)\\
	&= \kappa^k \tilde f_i(x) - \alpha_i x_i \sum_{j=1}^n (\nabla \kappa)_j \kappa^{k+\alpha_j - 1}\tilde f_j(x).
\end{align}
Since $\kappa \to \infty$ as $p(x) \to 1$, then the vector field has singularities at infinity, while $\tilde f_j(x)$ themselves are continuous on $\overline{\mathcal{D}}$ because of the asymptotic quasi-homogeneity of $f$.
Nevertheless, admissibility of compactifications yields the following observation.

\begin{lem}
\label{lem-order}
The right-hand side of (\ref{vectorfield-cw}) is $O(\kappa^k)$ as $\kappa\to \infty$.
In other words, the order with respect to $\kappa$ is independent of $i$.
\end{lem}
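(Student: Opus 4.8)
The plan is to treat the two contributions to the right-hand side of (\ref{vectorfield-cw}) separately: the leading term $\kappa^k\tilde f_i(x)$ and the correction term $-\alpha_i x_i\sum_{j=1}^n(\nabla\kappa)_j\kappa^{k+\alpha_j-1}\tilde f_j(x)$. For the first term there is nothing to prove: by asymptotic quasi-homogeneity of $f$, the functions $\tilde f_j$ defined in (\ref{f-tilde}) extend continuously to the compact set $\overline{\mathcal{D}}$, hence are bounded there, so $\kappa^k\tilde f_i(x) = O(\kappa^k)$. Likewise $x_i$ is bounded on $\overline{\mathcal{D}}$ and each $\tilde f_j(x)$ is bounded, so it suffices to show that every factor $(\nabla\kappa)_j\kappa^{k+\alpha_j-1}$ occurring in the sum is $O(\kappa^k)$; equivalently, that $(\nabla\kappa)_j = O(\kappa^{1-\alpha_j})$ as $\kappa\to\infty$.

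This last estimate is where (A2) together with the compatibility relation (\ref{LCM}) enters, and it is the heart of the argument. I would rewrite (A2) in terms of $x$ and $\kappa$ using $y_j = \kappa^{\alpha_j}x_j$ and, from (A0)--(A1) and the identity $p(y)^{2c}=\kappa^{2c}p(x)^{2c}$, the relation $p(y)=\kappa\,p(x)$ with $p(x)\to 1$ as $\kappa\to\infty$. Substituting into (A2) gives
\begin{equation*}
(\nabla\kappa)_j \sim \frac{1}{\alpha_j}\,\frac{(\kappa^{\alpha_j}x_j)^{2\beta_j-1}}{(\kappa\,p(x))^{2c-1}} = \frac{1}{\alpha_j}\,\frac{x_j^{2\beta_j-1}}{p(x)^{2c-1}}\;\kappa^{\,\alpha_j(2\beta_j-1)-(2c-1)}.
\end{equation*}
Since $\alpha_j\beta_j = c$ by (\ref{LCM}), one has $\alpha_j(2\beta_j-1) = 2c-\alpha_j$, so the exponent of $\kappa$ simplifies to $\alpha_j(2\beta_j-1)-(2c-1) = 1-\alpha_j$. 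Because $x$ ranges over the bounded set $\overline{\mathcal{D}}$ and $p(x)$ stays bounded away from $0$ near the horizon $\mathcal{E}$, the prefactor $\tfrac{1}{\alpha_j}x_j^{2\beta_j-1}p(x)^{-(2c-1)}$ is bounded; hence $(\nabla\kappa)_j = O(\kappa^{1-\alpha_j})$. Multiplying by $\kappa^{k+\alpha_j-1}$ yields $O(\kappa^k)$ for every $j$, and summing the finitely many terms preserves this, completing the proof.

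The one point requiring a little care — and, I expect, the only real obstacle — is the passage from the pointwise asymptotics in (A2), stated for $p(y)\to\infty$ uniformly on $S^{n-1}$, to a genuine bound $|(\nabla\kappa)_j|\le C\,\kappa^{1-\alpha_j}$ valid throughout $\overline{\mathcal{D}}$ near $\mathcal{E}$: one should invoke the uniformity of (A2) in the angular variable, and observe that away from $\mathcal{E}$ (say on $\{p(x)\le 1-\delta\}$, where $\kappa$ is bounded) the assertion is trivial, so the estimate holds globally. This is routine bookkeeping rather than a substantive difficulty; the genuine content of the lemma is the exponent cancellation $\alpha_j(2\beta_j-1) = 2c-\alpha_j$ forced by (\ref{LCM}), which is precisely why the compatibility condition (\ref{LCM}) is imposed in Definition \ref{dfn-qP}.
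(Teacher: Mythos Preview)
Your proof is correct and follows essentially the same route as the paper: both arguments substitute $y_j=\kappa^{\alpha_j}x_j$ into the asymptotic (A2), invoke (A1) to replace $p(y)$ by $\kappa$ (you do this via $p(y)=\kappa\,p(x)$ with $p(x)\to1$), and then use $\alpha_j\beta_j=c$ from (\ref{LCM}) to obtain the exponent cancellation $\alpha_j(2\beta_j-1)-(2c-1)=1-\alpha_j$. Your discussion of the uniformity needed to pass from the asymptotic relation to a genuine $O$-bound is a welcome point of care that the paper leaves implicit.
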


\begin{proof}
By admissibility (A1)-(A2), we have
\begin{equation*}
(\nabla \kappa(y))_i \sim \frac{1}{\alpha_i}\frac{y_i^{2\beta_i-1}}{\kappa(y)^{2c-1}} = \frac{1}{\alpha_i}\frac{\kappa^{\alpha_i(2\beta_i-1)}x_i^{2\beta_i-1}}{\kappa^{2c-1}} = \frac{1}{\alpha_i}\frac{x_i^{2\beta_i-1}}{\kappa^{\alpha_i-1}} \quad \text{ as }\quad p(y)\to \infty,
\end{equation*}
where we used the condition $\alpha_j \beta_j \equiv c$ for all $j$ from (\ref{LCM}).
Therefore the vector field (\ref{vectorfield-cw}) near infinity becomes
\begin{align}
\notag
x_i' &\sim \kappa^k \tilde f_i(x) - \alpha_i x_i \sum_{j=1}^n \frac{1}{\alpha_j}\frac{x_j^{2\beta_j-1}}{\kappa^{\alpha_j-1}} \kappa^{k+\alpha_j - 1}\tilde f_j(x)\\
\label{vectorfield-cw-2}
	&= \kappa^k \left\{ \tilde f_i(x) - \alpha_i x_i \sum_{j=1}^n \frac{x_j^{2\beta_j-1}}{\alpha_j} \tilde f_j(x)\right\} \quad \text{ as }\quad \kappa \to \infty.
\end{align}
Since $\tilde f_i$ is $O(1)$ as $\kappa\to \infty$, then right-hand side of (\ref{vectorfield-cw-2}) is $O(\kappa^k)$ as $\kappa\to \infty$.
\end{proof}
Lemma \ref{lem-order} leads to introduce the following transformation of time variable.

\begin{dfn}[Time-variable desingularization]\rm
Define the new time variable $\tau$ depending on $y$ by
\begin{equation}
\label{time-desing}
d\tau = \kappa(y(t))^{k} dt
\end{equation}
namely,
\begin{equation*}
t - t_0 = \int_{\tau_0}^\tau \frac{d\tau}{\kappa(y(\tau))^k},
\end{equation*}
where $\tau_0$ and $t_0$ denote the correspondence of initial times, and $y(\tau)$ is the solution trajectory $y(t)$ under the new time variable $\tau$.
We shall call (\ref{time-desing}) {\em the time-variable desingularization of (\ref{vectorfield-cw}) of order $k+1$}.
\end{dfn}

\begin{equation}
\label{ODE-desing}
\dot x_i \equiv \frac{dx_i}{d\tau} = \tilde f_i(x) - \alpha_i x_i \sum_{j=1}^n (\nabla \kappa)_j \kappa^{\alpha_j - 1}\tilde f_j(x) \equiv g_i(x).
\end{equation}

Summarizing the above observation, we have the extension of dynamics at infinity.
\begin{prop}[Extension of dynamics at infinity]
\label{prop-ext}
Let $\tau$ be the new time variable given by (\ref{time-desing}).
Then the dynamics (\ref{ODE-original}) can be extended to the infinity in the sense that the vector field $g$ is continuous on $\overline{\mathcal{D}}$.
\end{prop}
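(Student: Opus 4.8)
The plan is to display each component of $g$ as a finite algebraic combination of functions that are already known, or easily shown, to extend continuously up to the horizon, and then invoke continuity of sums and products. First I would note that $\overline{\mathcal D}$ is compact: $p(x)\le 1$ forces $x_i^{2\beta_i}\le 1$, hence $|x_i|\le 1$, for every $i$. On the open set $\mathcal D$ the right-hand side of (\ref{ODE-desing}) is exactly $\kappa^{-k}$ times the right-hand side of (\ref{vectorfield-cw}) (since $\dot x=\kappa^{-k}x'$), so $g$ is there a genuine, manifestly smooth vector field; only the behaviour as $x\to x_\ast\in\mathcal E$ is at stake. The natural candidate for the boundary value is
\[
g_i(x_\ast)=(f_{\alpha,k})_i(x_\ast)-\alpha_i x_{\ast,i}\sum_{j=1}^n\frac{x_{\ast,j}^{\,2\beta_j-1}}{\alpha_j}\,(f_{\alpha,k})_j(x_\ast),
\]
compare (\ref{vectorfield-cw-2}).

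I would then isolate two auxiliary continuity statements on $\overline{\mathcal D}$. (i) The functions $\tilde f_j$ of (\ref{f-tilde}) extend continuously to $\overline{\mathcal D}$, with $\tilde f_j\big|_{\mathcal E}=(f_{\alpha,k})_j$. This is where asymptotic quasi-homogeneity enters: one first checks that $x\to x_\ast\in\mathcal E$ forces $p(y)\to\infty$ (because $T$ is a $C^1$ diffeomorphism onto $\mathcal D$ by Proposition \ref{prop-adm}, a convergent preimage would have to land outside $\mathcal D$), then writes the diverging point $y=(\kappa^{\alpha_1}x_1,\dots,\kappa^{\alpha_n}x_n)$ in the scaled polar form $(r^{\alpha_1}\omega_1,\dots,r^{\alpha_n}\omega_n)$ with $\omega\in S^{n-1}$ and $r\to\infty$ — legitimate since $T$ maps each curve $r\mapsto(r^{\alpha_1}v_1,\dots,r^{\alpha_n}v_n)$ into itself (Section \ref{section-definition}, cf. \cite{Mat}), so $x$ and $y$ share a quasi-direction — and uses the uniform-in-$\omega$ convergence in the definition of asymptotic quasi-homogeneity together with the quasi-homogeneity of $f_{\alpha,k}$ to convert the polar-radius limit into $\tilde f_j(x)\to(f_{\alpha,k})_j(x_\ast)$; uniformity, plus boundedness away from $0$ of the polar radius of points near $x_\ast\neq 0$, is what upgrades "convergence along rays" to "continuity of the extension". (ii) The product $(\nabla\kappa(y))_j\,\kappa(y)^{\alpha_j-1}$, read as a function of $x$, extends continuously to $\overline{\mathcal D}$ with boundary value $\alpha_j^{-1}x_j^{2\beta_j-1}$. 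For this I would repeat the computation in the proof of Lemma \ref{lem-order}: start from (A2), replace $p(y)$ by $\kappa(y)$ via (A1), substitute $y_j=\kappa^{\alpha_j}x_j$ and use $\alpha_j\beta_j=c$; reading (A2) in its multiplicative form $(\nabla\kappa)_j=\alpha_j^{-1}y_j^{2\beta_j-1}p(y)^{-(2c-1)}(1+o(1))$ and using that $x_j$ is bounded on $\overline{\mathcal D}$, the bounded error is killed and the "$\sim$" becomes an honest limit, valid also at horizon points with $x_{\ast,j}=0$.

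Finally I would assemble: on $\mathcal D$,
\[
g_i(x)=\tilde f_i(x)-\alpha_i x_i\sum_{j=1}^n\big[(\nabla\kappa)_j\,\kappa^{\alpha_j-1}\big]\,\tilde f_j(x),
\]
which exhibits $g_i$ as a finite sum of products of the polynomial $x\mapsto\alpha_i x_i$, the functions from (i), the functions from (ii), and again those from (i); each factor extends continuously to $\overline{\mathcal D}$ by the two statements above and continuity of polynomials, hence so does $g_i$, with boundary value the displayed $g_i(x_\ast)$. This proves $g\in C^0(\overline{\mathcal D})$.

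The main obstacle is step (i): the definition of asymptotic quasi-homogeneity controls $r^{-(k+\alpha_j)}f_j(r^{\alpha_1}x_1,\dots,r^{\alpha_n}x_n)$ uniformly only over the Euclidean sphere $S^{n-1}$, whereas the horizon $\mathcal E=\{p(x)=1\}$ is a different hypersurface whose points may have vanishing coordinates, and the approach $x\to x_\ast$ takes place inside $\mathcal D$ rather than on $\mathcal E$. Passing cleanly from one to the other — reparametrizing approach to $x_\ast$ as divergence of $y$ along a scaled ray whose $S^{n-1}$-angular part converges, and checking that the convergence of $\tilde f_j$ survives this reparametrization uniformly — is the delicate part; step (ii) and the final assembly are bookkeeping with the admissibility conditions (A0)--(A3) already in hand.
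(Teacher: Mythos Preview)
Your approach is essentially the same as the paper's: write $g_i$ as a finite sum of products of the $x_i$'s, the $\tilde f_j$'s, and the factors $(\nabla\kappa)_j\kappa^{\alpha_j-1}$, and conclude continuity on $\overline{\mathcal D}$ from continuity of each factor. The paper's proof is a single sentence that simply asserts this, relying on the earlier remark (just before Lemma~\ref{lem-order}) that the $\tilde f_j$ are continuous on $\overline{\mathcal D}$ by asymptotic quasi-homogeneity and on Lemma~\ref{lem-order} for the gradient factors; your version spells out steps (i) and (ii) in considerably more detail than the paper does, but the structure is identical.
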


\begin{proof}
The component-wise desingularized vector field (\ref{ODE-desing}) is obviously continuous on $\overline{\mathcal{D}}$ since this consists of product and sum of continuous functions $x_i$'s and $\tilde f_i$'s on $\overline{\mathcal{D}}$.
\end{proof}

\begin{ex}[Extension of vector fields via quasi-parabolic compactifications]
In the case of quasi-parabolic compactification, $\nabla \kappa$ is given by
\begin{equation*}
(\nabla_y \kappa(y))_j  = \frac{2\beta_j y_j^{2\beta_j-1}}{2c\kappa^{2c-1} \left(1- \frac{2c-1}{2c}\kappa^{-1}\right)} = \frac{\kappa^{2c - \alpha_j} x_j^{2\beta_j-1}}{\alpha_j \kappa^{2c-1} \left(1- \frac{2c-1}{2c}\kappa^{-1}\right)} = \frac{x_j^{2\beta_j-1}}{\alpha_j \kappa^{\alpha_j-1} \left(1- \frac{2c-1}{2c}\kappa^{-1}\right)}.
\end{equation*}
We can see that $g$ in (\ref{ODE-desing}) can be extended to be $C^0$ on $\overline{\mathcal{D}}$.
\end{ex}

Proposition \ref{prop-ext} shows that the \lq\lq dynamics and invariant sets at infinity" make sense.
For example, \lq\lq {\em equilibria at infinity}" defined below are well-defined.

\begin{dfn}[Equilibria at infinity]\rm
We say that the vector field (\ref{ODE-original}) has an {\em equilibrium at infinity} in the direction $x_\ast$ if $x_\ast$ is an equilibrium of (\ref{ODE-desing}) on $\partial {\mathcal{D}}$.
\end{dfn}

Now divergent solutions are described in terms of trajectories asymptotic to equilibria on the horizon for desingularized vector fields.
\begin{thm}[Divergent solutions and asymptotic behavior]
\label{thm-diverge}
Let $y(t)$ be a solution of (\ref{ODE-original}) with the interval of maximal existence time $(a,b)$, possibly $a=-\infty$ and $b=+\infty$.
Assume that $y$ tends to infinity in the direction $x_\ast$ as $t\to b-0$ or $t\to a+0$.
Then $x_\ast$ is an equilibrium of (\ref{ODE-desing}) on $\mathcal{E}$.
\end{thm}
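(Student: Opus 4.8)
\emph{Proof strategy.}
The plan is to transport the trajectory to the compactified coordinates, rescale time by (\ref{time-desing}), and show that in the rescaled time the orbit exists for all time and still converges to $x_\ast$; continuity of the desingularized vector field $g$ then forces $g(x_\ast)=0$. It suffices to treat $t\to b-0$, the case $t\to a+0$ being entirely analogous with $\tau\to-\infty$ in place of $\tau\to+\infty$.

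First I would set $x(t):=T(y(t))$. By the definition of ``tending to infinity in the direction $x_\ast$'' we have $p(y(t))\to\infty$ and $x(t)\to x_\ast$ as $t\to b-0$; since $p(y)^{2c}=\kappa(y)^{2c}p(x)^{2c}$ by (\ref{LCM}), condition (A1) gives $p(x(t))\to 1$, so $x_\ast\in\mathcal{E}$ (the easy half of the statement). Introduce $\tau$ by $d\tau=\kappa(y(t))^k\,dt$; since $\kappa(y)>p(y)\ge 0$ by (A0), the map $t\mapsto\tau(t)=\tau_0+\int_{t_0}^t\kappa(y(s))^k\,ds$ is a strictly increasing $C^1$ bijection of $[t_0,b)$ onto an interval $[\tau_0,\tau_b)$ with $\tau_b\in(\tau_0,+\infty]$, and in the $\tau$-variable $x(\cdot)$ is a $C^1$ solution of $\dot x=g(x)$ (Eq.~(\ref{ODE-desing})), $g$ being continuous on $\overline{\mathcal{D}}$ by Proposition~\ref{prop-ext}.

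The crucial point will be to show $\tau_b=+\infty$, i.e.\ that the orbit does not reach the horizon in finite rescaled time. Differentiating $\kappa(y(t))$ along (\ref{ODE-original}), using $f_j(y)=\kappa^{k+\alpha_j}\tilde f_j(x)$ from (\ref{f-tilde}), and dividing by $\kappa^k$ to pass to $\tau$-time yields
\begin{equation*}
\frac{d}{d\tau}\log\kappa(y(\tau)) = c(x(\tau)),\qquad c(x):=\sum_{j=1}^{n}(\nabla\kappa)_j\,\kappa^{\alpha_j-1}\,\tilde f_j(x).
\end{equation*}
By (A2) together with the asymptotic quasi-homogeneity of $f$ — the same estimates already used in the proofs of Lemma~\ref{lem-order} and Proposition~\ref{prop-ext} — the function $c$ extends continuously to the compact set $\overline{\mathcal{D}}$, so $|c|\le C_0$ there for some constant $C_0$, whence $|\log\kappa(y(\tau))|\le|\log\kappa(y(\tau_0))|+C_0(\tau-\tau_0)$ on $[\tau_0,\tau_b)$. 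On the other hand $p(y(t))\to\infty$ while $p(x(t))\to 1$ forces $\kappa(y(t))=p(y(t))/p(x(t))\to\infty$, i.e.\ $\log\kappa(y(\tau))\to+\infty$ as $\tau\to\tau_b$; an at most linear growth rate in $\tau$ is incompatible with this unless $\tau_b=+\infty$.

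Finally, with $\tau_b=+\infty$ we have a $C^1$ solution $x(\tau)$ of $\dot x=g(x)$ on $[\tau_0,\infty)$ with $x(\tau)\to x_\ast$ and $g$ continuous at $x_\ast$. If $g(x_\ast)\ne 0$, I would choose an index $i$ with $|g_i(x_\ast)|=:\delta>0$; by continuity there is $\sigma_0$ such that for $\sigma\ge\sigma_0$ the quantity $g_i(x(\sigma))$ has constant sign and $|g_i(x(\sigma))|>\delta/2$, so $|x_i(\tau)-x_i(\sigma_0)|=\int_{\sigma_0}^{\tau}|g_i(x(\sigma))|\,d\sigma>\frac{\delta}{2}(\tau-\sigma_0)\to\infty$ as $\tau\to\infty$, contradicting $x_i(\tau)\to(x_\ast)_i$. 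Hence $g(x_\ast)=0$, so $x_\ast$ is an equilibrium of (\ref{ODE-desing}) lying on $\mathcal{E}$. I expect the main obstacle to be the third step — excluding a finite-$\tau$ escape to the horizon — because that is exactly where admissibility of the compactification, through the boundedness of $c$ on $\overline{\mathcal{D}}$, is indispensable; the remaining steps are standard.
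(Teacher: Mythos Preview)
Your proof is correct, and the overall architecture---transport to compactified coordinates, show $\tau_b=+\infty$, then conclude $g(x_\ast)=0$ from convergence---matches the paper's. The genuine difference lies in how you establish $\tau_b=+\infty$.

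The paper argues by continuation: if $\tau_b<\infty$, then $x(\tau)\to x_\ast\in\overline{\mathcal{D}}$ as $\tau\to\tau_b$, and since $g$ is continuous on $\overline{\mathcal{D}}$ the initial value problem at $(\tau_b,x_\ast)$ has a (locally unique) solution, so the orbit extends past $\tau_b$, contradicting the maximality of $b$. This is short but leans on a continuation/uniqueness principle at the boundary $\mathcal{E}$, which is somewhat delicate when only $C^0$ regularity of $g$ is asserted (Proposition~\ref{prop-ext}).

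Your route is more quantitative: you compute $\frac{d}{d\tau}\log\kappa = c(x)$ with $c(x)=\sum_j(\nabla\kappa)_j\,\kappa^{\alpha_j-1}\tilde f_j(x)$, note that $c$ extends continuously to the compact set $\overline{\mathcal{D}}$ (this is exactly the term appearing in (\ref{ODE-desing}), so its continuity is implicit in Proposition~\ref{prop-ext}), and conclude that $\log\kappa$ grows at most linearly in $\tau$; since $\kappa\to\infty$ this forces $\tau_b=+\infty$. This avoids any appeal to uniqueness or continuation across the horizon and makes the role of admissibility (A2) completely explicit. You also spell out the standard argument that a convergent orbit forces $g(x_\ast)=0$, which the paper leaves tacit. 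Both approaches are valid; yours is more self-contained at the cost of a little extra computation, while the paper's is terser but presupposes the reader is comfortable with extension of ODE solutions through a boundary point where the vector field is merely continuous.
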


\begin{proof}
The property $b = \sup \{t\mid y(t) \text{ is a solution of (\ref{ODE-original})}\}$ corresponds to the property that
\begin{equation*}
\sup \{\tau\mid x(\tau) = T(y(t)) \text{ is a solution of (\ref{ODE-desing}) in the time variable $\tau$}\} = \infty.
\end{equation*}
Indeed, if not, then $\tau \to \tau_0 < \infty$ and $\lim_{\tau \to \tau_0-0}x(\tau) = x_\ast$ as $t\to b-0$.
The condition $x(\tau) = x_\ast$ is the regular initial condition of (\ref{ODE-desing}).
The vector field (\ref{ODE-desing}) with the new initial point $x(\tau) = x_\ast$ thus has a locally unique solution $x(\tau)$ in a neighborhood of $\tau_0$, which contradicts the maximality of $b$.
Therefore we know that $\tau \to +\infty$ as $t\to b-0$.
Since $\lim_{\tau \to \infty}x(\tau) = x_\ast$, then $x_\ast$ is an equilibrium of (\ref{ODE-desing}) on $\partial {\mathcal{D}}$.
The similar arguments show that $t\to a+0$ corresponds to $\tau \to -\infty$ and that the same consequence holds true.
\end{proof}

This theorem shows that divergent solutions in the direction $x_\ast$ correspond to trajectories of (\ref{ODE-desing}) on the stable manifold $W^s(x_\ast)$\footnote{
%	footnote
The stable set $W^s(p)$ of a point $p$ is characterized as $\{x=x(0)\mid d(x(\tau), p)\to 0 \text{ as }\tau \to \infty\}$ with a metric $d$ on the phase space.
If $p$ is an equilibrium, the {\em (center-)stable manifold theorem} indicates that the set $W^s(p)$ is, at least locally, has a smooth manifold structure, which is called a {\em (local) stable manifold} of $p$.
%	footnote : end
}
of the equilibrium $x_\ast$.
This correspondence opens the door to applications of various results in dynamical systems to divergent solutions.
Before moving to the blow-up argument, we gather several properties of dynamics at infinity, which will be useful to concrete studies.

\begin{thm}[Dynamics at infinity, cf. \cite{Mat}]
\label{thm-dyn-infty}
\begin{enumerate}
\item The horizon $\mathcal{E} = \partial {\mathcal{D}}$ is an invariant manifold of (\ref{ODE-desing}).
%
%\item For a given asymptotically quasi-homogeneous vector field $f$ with type $\alpha$ and order $k+1$, the definition of equilibria at infinity in the direction $x_\ast$ for (\ref{ODE-original}) is independent of the choice of admissible quasi-homogeneous compactifications with type $\alpha$ and order $k+1$.
%More precisely, equilibria at infinity under admissible compactifications with type $\alpha$ and order $k+1$ are given by zeros of the following equation:
%\begin{equation*}
%(\tilde f_{\alpha,k})_i(x) - \alpha_i x_i \sum_{j=1}^n \frac{x_j^{2\beta_j-1}}{\alpha_j} (\tilde f_{\alpha,k})_j(x),\quad i=1,\cdots, n,
%\end{equation*}
%
\item Dynamics of (\ref{ODE-desing}) on $\mathcal{E}$ is dominated by 
\begin{equation*}
\dot x_i = (\tilde f_{\alpha,k})_i - \left(\sum_{j=1}^n \beta_j x^{2\beta_j - 1}(\tilde f_{\alpha,k})_j \right) \frac{x_i}{\beta_i}.
\end{equation*}
\item Time evolution of $1-p(x)^{2c}$ in $\tau$-time scale is dominated by
\begin{equation*}
\frac{d}{d\tau} (1-p(x)^{2c}) =  -\left(\sum_{j=1}^n \beta_j x_j^{2\beta_j-1}\tilde f_j\right) (1-p(x)^{2c}).
\end{equation*}
\item Assume that the vector field $f$ in (\ref{ODE-original}) is quasi-homogeneous of type $(\alpha_1,\cdots, \alpha_n)$ and order $k+1$. Then the desingularized vector field $g$ given in (\ref{ODE-desing}) satisfies
\begin{equation}
\label{symmetry}
g_i((-1)^{\alpha_1} x_1,\cdots, (-1)^{\alpha_n} x_n) = (-1)^{k+\alpha_n} g_i(x_1,\cdots, x_n).
\end{equation}
In particular, for any asymptotically quasi-homogeneous vector field $f$ in (\ref{ODE-original}) of type $(\alpha_1,\cdots, \alpha_n)$ and order $k+1$, the desingularized vector field $g$ satisfies (\ref{symmetry}) on $\mathcal{E}$.
\end{enumerate}
\end{thm}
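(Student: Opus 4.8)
The plan is to prove the four assertions in sequence, since each one essentially unpacks the structure of the desingularized vector field $g$ in (\ref{ODE-desing}), restricted to or evaluated near the horizon $\mathcal{E} = \{p(x)^{2c}=1\}$. The main computational engine is the scaling relation $f_j(\kappa^{\alpha_1}x_1,\dots,\kappa^{\alpha_n}x_n) = \kappa^{k+\alpha_j}\tilde f_j(x)$ from (\ref{f-tilde}) together with the asymptotic identities (A1)--(A2), which on $\mathcal{E}$ (where $\kappa\to\infty$, equivalently $p(y)\to\infty$) become exact in the limit. I would open by recording the clean form of $g$ on $\mathcal{E}$ obtained in (\ref{vectorfield-cw-2}): using $(\nabla\kappa)_j \sim \frac{1}{\alpha_j}\frac{x_j^{2\beta_j-1}}{\kappa^{\alpha_j-1}}$ and $\alpha_j\beta_j = c$, the quantity $\sum_j \alpha_j x_j (\nabla\kappa)_j \kappa^{\alpha_j-1}\tilde f_j$ collapses to $\sum_j \frac{x_j^{2\beta_j}}{\beta_j}\cdot\frac{c}{\alpha_j}\tilde f_j$... more carefully, to $\sum_j x_j\,x_j^{2\beta_j-1}\,\tilde f_j = \sum_j x_j^{2\beta_j}\tilde f_j$ after absorbing constants; I would write out this reduction once and for all so that the limiting field $g|_{\mathcal{E}}$ is displayed explicitly, matching item 2 after replacing $\tilde f_j$ by $(\tilde f_{\alpha,k})_j$ via asymptotic quasi-homogeneity.

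For item 3, the strategy is to compute $\frac{d}{d\tau}(1-p(x)^{2c})$ directly. Since $\frac{d}{d\tau}p(x)^{2c} = \sum_i 2\beta_i x_i^{2\beta_i-1}\dot x_i$, I would substitute $\dot x_i = g_i(x)$ from (\ref{ODE-desing}) and use the constraint-flavored identity $\sum_i \alpha_i\beta_i x_i^{2\beta_i-1}x_i = c\,p(x)^{2c}$ to see the two terms in $g_i$ combine: the $\tilde f_i$ term contributes $2\sum_i\beta_i x_i^{2\beta_i-1}\tilde f_i$, and the second term contributes $-2\,p(x)^{2c}\cdot(\text{same bracket})$ after the $\alpha_i x_i (\nabla\kappa)_j$ sum telescopes using (A2); so $\frac{d}{d\tau}p(x)^{2c} = 2\big(\sum_j\beta_j x_j^{2\beta_j-1}\tilde f_j\big)(1-p(x)^{2c})\cdot(\text{bookkeeping})$, giving the stated ODE up to the factor conventions in the statement. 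Item 1 is then an immediate corollary: $1-p(x)^{2c}=0$ is an invariant set because the right-hand side of the scalar ODE vanishes there, so $\mathcal{E}$ is flow-invariant; that it is a manifold follows since $\nabla(p^{2c})\neq 0$ on $\mathcal{E}$ (not all $x_i$ vanish there).

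Item 4 is the symmetry statement. When $f$ is genuinely quasi-homogeneous of type $\alpha$ and order $k+1$, we have $\tilde f_j(x) = \kappa^{-(k+\alpha_j)}f_j(\kappa^{\alpha_1}x_1,\dots) = (f_{\alpha,k})_j(x)$ exactly, and $(f_{\alpha,k})_j$ satisfies $(f_{\alpha,k})_j(r^{\alpha_1}x_1,\dots,r^{\alpha_n}x_n) = r^{k+\alpha_j}(f_{\alpha,k})_j(x)$ for all real $r$, in particular $r=-1$. I would plug $((-1)^{\alpha_1}x_1,\dots,(-1)^{\alpha_n}x_n)$ into each of the two summands of $g_i$ in (\ref{ODE-desing}): the first term picks up $(-1)^{k+\alpha_i}$; in the second term, $\alpha_i x_i \mapsto (-1)^{\alpha_i}\alpha_i x_i$, $x_j^{2\beta_j-1}\mapsto (-1)^{\alpha_j(2\beta_j-1)}x_j^{2\beta_j-1} = (-1)^{2c-\alpha_j}x_j^{2\beta_j-1} = (-1)^{\alpha_j}x_j^{2\beta_j-1}$ (using $\alpha_j\beta_j=c$), and $\tilde f_j\mapsto (-1)^{k+\alpha_j}\tilde f_j$, so the $j$-th contribution acquires $(-1)^{\alpha_i}(-1)^{\alpha_j}(-1)^{k+\alpha_j} = (-1)^{\alpha_i}(-1)^{k}$, i.e. $(-1)^{k+\alpha_i}$ overall, matching the first term — hence the common factor $(-1)^{k+\alpha_i}$ can be pulled out. (The index $\alpha_n$ in (\ref{symmetry}) should read $\alpha_i$; this is the homogeneity weight of the $i$-th component.) For the "in particular" clause, on $\mathcal{E}$ only the leading quasi-homogeneous part $f_{\alpha,k}$ survives in $g$ by item 2, so the identity for general asymptotically quasi-homogeneous $f$ on $\mathcal{E}$ reduces to the quasi-homogeneous case just proved.

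The main obstacle I anticipate is purely bookkeeping: in items 2 and 3 one must pass from the \emph{asymptotic} relations (A1)--(A2) (which hold as $p(y)\to\infty$) to \emph{exact} identities valid on $\mathcal{E}$, and be scrupulous that the stated formulas use a particular normalization of the $(\nabla\kappa)_j$ limit and of the bracketed sum $\sum_j\beta_j x_j^{2\beta_j-1}\tilde f_j$ versus $\sum_j\frac{x_j^{2\beta_j-1}}{\alpha_j}\tilde f_j$ — these differ by the factor $\alpha_j\beta_j=c$, which is constant, but one must track where this constant is absorbed (e.g. into the time rescaling or into the definition of the limiting field). I would handle this by first rescaling to make $c$-factors explicit, then verifying the reduced field on $\mathcal{E}$ agrees termwise with the formula in item 2, after which items 1 and 3 follow mechanically and item 4 is the $r=-1$ specialization of quasi-homogeneity as sketched above.
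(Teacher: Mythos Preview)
The paper does not actually prove this theorem: its entire proof reads ``See \cite{Mat}.'' So there is no argument here to compare yours against. That said, your plan is the natural one and matches what one would expect the referenced proof to do: compute $\frac{d}{d\tau}p(x)^{2c}$ directly using $\alpha_i\beta_i=c$ to collapse the sum $\sum_i 2\alpha_i\beta_i x_i^{2\beta_i}$ to $2c\,p(x)^{2c}$, read off items 1 and 3 from the resulting scalar ODE, obtain item 2 by passing to the limit $\kappa\to\infty$ using (A1)--(A2), and get item 4 by specializing the quasi-homogeneity relation to $r=-1$ together with the parity identity $(-1)^{\alpha_j(2\beta_j-1)}=(-1)^{2c-\alpha_j}=(-1)^{\alpha_j}$.

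Two remarks. First, your observation that the exponent $\alpha_n$ in (\ref{symmetry}) should be $\alpha_i$ is correct; your sign computation confirms the factor is $(-1)^{k+\alpha_i}$ componentwise. Second, the bookkeeping concern you flag is genuine and not merely cosmetic: as written, item 3 involves $\tilde f_j$ (not $(\tilde f_{\alpha,k})_j$) and so purports to be an \emph{exact} identity, yet your derivation needs $(\nabla\kappa)_j\kappa^{\alpha_j-1}$ to equal $\frac{1}{\alpha_j}x_j^{2\beta_j-1}$ exactly, which (A2) only gives asymptotically. For the concrete compactifications in the paper (quasi-Poincar\'{e}, quasi-parabolic) one has $\kappa=\kappa(p(y))$ and an exact formula for $\nabla\kappa$, so the computation can be pushed through with an explicit (compactification-dependent) prefactor; for an arbitrary admissible $\kappa$ satisfying only (A0)--(A3) the cleanest reading of items 2--3 is as leading-order statements near $\mathcal{E}$, which still suffices for item 1. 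Your proposal already anticipates this, so there is no gap in your strategy --- only in the precision of the theorem statement itself, which the paper outsources to \cite{Mat}.
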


\begin{proof}
See \cite{Mat}.
\end{proof}

\begin{rem}\rm
Theorem \ref{thm-dyn-infty}-4 shows that the vector field at infinity is equivariant with respect to the symmetry $\iota_\alpha (x)$ defined as
\begin{equation*}
(x_1,\cdots, x_n) \mapsto \iota_\alpha(x) \equiv ((-1)^{\alpha_1} x_1, \cdots, (-1)^{\alpha_n} x_n).
\end{equation*}
on $\mathcal{E}$. 
In particular, if $x\in \mathcal{E}$ is an equilibrium of (\ref{ODE-desing}), then so is $\iota_\alpha(x)$.
In the homogeneous case, the symmetry is just $\iota_\alpha(x) = -x$, as stated in Proposition 2.6 of \cite{EG2006}. 
\end{rem}

%
%	new subsection
%
\subsection{Desingularized vector field with quasi-parabolic compactifications}

In the case of quasi-parabolic compactifications, there is an alternative time-variable desingularization given as follows.

\begin{dfn}[Time-variable desingularization for quasi-parabolic compactifications]\rm
Let $y(t)$ be a solution of (\ref{ODE-original}) with an asymptotically quasi-homogeneous vector field $f$ of type $\alpha$ and order $k+1$.
Let also $x = T_{para}(y)$ be the image of $y$ via the quasi-parabolic compactification of type $\alpha$.
Define the new time variable $\tau$ depending on $y(t)$ by
\begin{equation}
\label{time-desing-para}
d\tau = \kappa(y(t))^{k}\left(1-\frac{2c-1}{2c}\kappa^{-1}\right)^{-1} dt = (1-p(x)^{2c})^{-k}\left(1-\frac{2c-1}{2c}(1-p(x)^{2c}) \right)^{-1}dt.
\end{equation}
We shall call (\ref{time-desing-para}) {\em the quasi-parabolic time-variable desingularization of (\ref{vectorfield-cw})}.
\end{dfn}

In the case of quasi-Poincar\'{e} compactifications, the desingularized vector field $g$ associated with the vector field $f$ is {\em not always $C^1$} even if $f$ is sufficiently smooth because of the presence of radicals in $\kappa$ (see \cite{Mat}).
On the other hand, in the case of quasi-parabolic compactifications, if $f$ is smooth, the corresponding desingularized vector field can be {\em always} smooth on $\overline{\mathcal{D}}$ with the alternative time-variable desingularization.
This big difference is one of the reasons why we introduce an alternative quasi-homogeneous compactifications, which is mentioned again in Section \ref{section-blow-up}.

\begin{prop}
\label{prop-para-extension}
Let $f$ be an asymptotically quasi-homogeneous, $C^1$ vector field $f$ of type $\alpha$ and order $k+1$.
Let $x = T_{para}(y)$ be a new variable through quasi-parabolic compactification.
Then the vector field associated with (\ref{vectorfield-cw}) and $\tau$-timescale given in (\ref{time-desing-para}) is $C^1$ on $\overline{\mathcal{D}}$. 
\end{prop}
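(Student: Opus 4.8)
The plan is to compute the desingularized vector field explicitly in the quasi-parabolic case and verify directly that the rescaled right-hand side is a composition of $C^1$ functions on $\overline{\mathcal{D}}$. First I would recall that, from (\ref{vectorfield-cw}) together with the explicit formula for $\nabla_y \kappa$ in the quasi-parabolic case (established in the proof that $T_{para}$ is admissible, and restated in the Example following Proposition \ref{prop-ext}), we have
\begin{equation*}
(\nabla_y \kappa(y))_j = \frac{x_j^{2\beta_j-1}}{\alpha_j \kappa^{\alpha_j-1}\left(1-\frac{2c-1}{2c}\kappa^{-1}\right)}.
\end{equation*}
Substituting this into (\ref{vectorfield-cw}) and then passing to the $\tau$-timescale of (\ref{time-desing-para}), which contributes precisely the factor $\kappa^{-k}\left(1-\frac{2c-1}{2c}\kappa^{-1}\right)$, one obtains
\begin{equation*}
\dot x_i = \left(1-\frac{2c-1}{2c}\kappa^{-1}\right)\tilde f_i(x) - \alpha_i x_i \sum_{j=1}^n \frac{x_j^{2\beta_j-1}}{\alpha_j}\tilde f_j(x),
\end{equation*}
where $\tilde f_j$ is as in (\ref{f-tilde}). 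This is the formula I would take as the working description of the desingularized vector field with the quasi-parabolic time rescaling.

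Next I would argue $C^1$ regularity of each ingredient on $\overline{\mathcal{D}}$. The polynomial factors $x_i$ and $x_j^{2\beta_j-1}$ are obviously smooth. The factor $1-\frac{2c-1}{2c}\kappa^{-1} = 1 - \frac{2c-1}{2c}(1-p(x)^{2c})$ is a polynomial in $p(x)^{2c} = \sum_i x_i^{2\beta_i}$, hence smooth on all of $\mathbb{R}^n$; crucially, unlike the quasi-Poincaré case, $\kappa^{-1} = 1-p(x)^{2c}$ contains no radical. The only nontrivial point is the regularity of $\tilde f_j(x) = \kappa^{-(k+\alpha_j)} f_j(\kappa^{\alpha_1}x_1,\dots,\kappa^{\alpha_n}x_n)$ up to and including the horizon $\mathcal{E} = \{p(x)=1\}$, where $\kappa \to \infty$. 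Here I would invoke the hypothesis that $f$ is $C^1$ and asymptotically quasi-homogeneous of type $\alpha$ and order $k+1$: this is exactly the condition guaranteeing that $\tilde f_j$ extends continuously to $\overline{\mathcal{D}}$ with limit $(\tilde f_{\alpha,k})_j$ on $\mathcal{E}$. To upgrade continuity to $C^1$, I would differentiate $\tilde f_j$ in $x$ and check that each resulting term remains bounded with a continuous extension as $p(x)\to 1$; the derivatives of $\kappa$ with respect to $x$ are again rational in $p(x)^{2c}$ with no radical (from $\kappa^{-1}=1-p(x)^{2c}$), and the derivatives of $f_j$ evaluated at the rescaled argument are controlled using that $Df_j$ is itself asymptotically quasi-homogeneous of the appropriate order — a fact one reads off by differentiating the defining limit, as in the corresponding statement in \cite{Mat}. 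Assembling these, $\dot x_i$ is a finite sum of products of $C^1$ functions on $\overline{\mathcal{D}}$, hence $C^1$.

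The main obstacle is precisely the last regularity claim for $\tilde f_j$ near the horizon: one must be careful that differentiating the asymptotic quasi-homogeneity relation genuinely yields a uniform $C^1$ estimate rather than merely a pointwise one, and that the blow-up factors $\kappa^{\alpha_i}$ appearing inside the argument of $f_j$ are exactly cancelled by the prefactor $\kappa^{-(k+\alpha_j)}$ after one differentiation — this is where the constraint $\alpha_i\beta_i \equiv c$ from (\ref{LCM}) and the admissibility condition (A2) do the essential bookkeeping. I expect the cleanest route is to reduce to the quasi-homogeneous model case (where $\tilde f_j = (\tilde f_{\alpha,k})_j$ is a polynomial and the claim is immediate) and treat the remainder $f_j - (f_{\alpha,k})_j$ as a lower-order perturbation whose rescaling $\kappa^{-(k+\alpha_j)}\big(f_j - (f_{\alpha,k})_j\big)(\kappa^{\alpha_1}x_1,\dots)$ tends to $0$ in $C^1$ on $\overline{\mathcal D}$, again citing \cite{Mat} for the perturbation estimate. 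Once that is in place, the $C^1$ conclusion on $\overline{\mathcal{D}}$ follows by the product and chain rules, completing the proof.
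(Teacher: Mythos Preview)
Your proposal is correct and follows essentially the same route as the paper: you derive the explicit desingularized vector field
\[
\dot x_i = \left(1-\tfrac{2c-1}{2c}(1-p(x)^{2c})\right)\tilde f_i(x) - \alpha_i x_i \sum_{j=1}^n \tfrac{x_j^{2\beta_j-1}}{\alpha_j}\tilde f_j(x),
\]
and then argue that each factor is $C^1$ on $\overline{\mathcal D}$, with the crucial observation that $\kappa^{-1}=1-p(x)^{2c}$ contains no radical. The paper's own proof is in fact briefer than yours on the key point --- it simply asserts that each $\tilde f_j$ is $C^1$ because it is built from powers of $(1-p(x)^{2c})$ times smooth terms of $f_j$ (implicitly leaning on a polynomial/term-by-term structure) --- whereas you attempt a more careful perturbation argument around the quasi-homogeneous model; either justification suffices, and your extra care about differentiating the asymptotic relation is, if anything, more scrupulous than what the paper records.
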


\begin{proof}
The desingularized vector field in the time variable $\tau$ given in (\ref{time-desing-para}) is
\begin{equation}
\label{ODE-desing-para}
\frac{dx_i}{d\tau} = \left(1-\frac{2c-1}{2c}(1-p(x)^{2c}) \right)\tilde f_i(x) - \alpha_i x_i \sum_{j=1}^n \frac{x_j^{2\beta_j-1}}{\alpha_j}\tilde f_j(x).
\end{equation}
Note that each $\tilde f_j(x)$ is $C^1$ on $\overline{\mathcal{D}}$, since all terms of $\tilde f_j$ are multiples of powers of $(1-p(x)^{2c})$ and smooth asymptotically quasi-homogeneous terms in $f_j(y)$.
Consequently, we know that the right-hand side of (\ref{ODE-desing-para}) is $C^1$ on $\overline{\mathcal{D}}$.
\end{proof}

%
%	new subsection
%
\subsection{Desingularized vector field with directional compactifications}

The desingularized vector field associated with $f$ is also considered with directional compactifications like (\ref{dir-cpt}).
For simplicity, set $i=n$ in (\ref{dir-cpt}).
The corresponding dynamics in $t$-timescale is then calculated as follows:
\begin{equation}
\label{vec-directional}
\begin{pmatrix}
y_1' \\ y_2' \\ \vdots \\ y_n'
\end{pmatrix}
=
\begin{pmatrix}
-\alpha_1 s^{-(\alpha_1+1)} x_1 & s^{-\alpha_1}  & 0 & \cdots & 0\\
-\alpha_2 s^{-(\alpha_2+1)} x_2 & 0 & s^{-\alpha_2} & \cdots & 0\\
\vdots & \vdots & \vdots & \ddots & \vdots \\
-\alpha_{n-1} s^{-(\alpha_{n-1}+1)} & 0 & 0 & \cdots & s^{-\alpha_{n-1}}\\
-\alpha_n s^{-(\alpha_n+1)} & 0 &0 &  \cdots & 0
\end{pmatrix}
\begin{pmatrix}
s' \\ x_1' \\ x_2' \\ \vdots \\ x_{n-1}'
\end{pmatrix}
\equiv D_s
\begin{pmatrix}
s' \\ x_1' \\ x_2' \\ \vdots \\ x_{n-1}'
\end{pmatrix}.
\end{equation}
It follows that the matrix $D_s$ is written by the following product of matrices (see also \cite{Mat}):
\begin{equation*}
D_s = \begin{pmatrix}
s^{-\alpha_1} & 0 & \cdots & 0 \\
0 & s^{-\alpha_2} & \cdots & 0\\
\vdots & \vdots & \ddots & \vdots \\
0 & 0 & \cdots & s^{-\alpha_n}
\end{pmatrix}
\begin{pmatrix}
\alpha_1 x_1 & 1 & 0 & \cdots & 0\\
\alpha_2 x_2 & 0 & 1 & \cdots & 0 \\
\vdots & \vdots  & \vdots & \ddots & \vdots \\
\alpha_{n-1} x_{n-1} & 0 & 0 & \cdots & 1\\
\alpha_n & 0 & 0 & \cdots & 0
\end{pmatrix}
\begin{pmatrix}
-s^{-1} & 0 & \cdots & 0 \\
0 & 1 & \cdots & 0\\
\vdots & \vdots & \ddots & \vdots \\
0 & 0 & \cdots & 1
\end{pmatrix}.
\end{equation*}

Since $\alpha_i > 0$ for all $i$, then the matrix $D_s$ is invertible on $\{s>0\}\times \mathbb{R}^n$ to obtain
\begin{equation*}
D_s^{-1} = \begin{pmatrix}
-s & 0 & \cdots & 0 \\
0 & 1 & \cdots & 0\\
\vdots & \vdots & \ddots & \vdots \\
0 & 0 & \cdots & 1
\end{pmatrix}
B
\begin{pmatrix}
s^{\alpha_1} & 0 & \cdots & 0 \\
0 & s^{\alpha_2} & \cdots & 0\\
\vdots & \vdots & \ddots & \vdots \\
0 & 0 & \cdots & s^{\alpha_n}
\end{pmatrix},
\end{equation*}
where $B$ is the inverse\footnote{
The existence of $B$ immediately follows by cyclic permutations and the fact that $\alpha_n > 0$.
}
of the matrix
\begin{equation*}
\begin{pmatrix}
\alpha_1 x_1 & 1 & 0 & \cdots & 0\\
\alpha_2 x_2 & 0 & 1 & \cdots & 0 \\
\vdots & \vdots  & \vdots & \ddots & \vdots \\
\alpha_{n-1} x_{n-1} & 0 & 0 & \cdots & 1\\
\alpha_n & 0 & 0 & \cdots & 0
\end{pmatrix}.
\end{equation*}

Therefore (\ref{vec-directional}) in $\{s>0\}\times \mathbb{R}^{n-1}$ is equivalent to
\begin{equation}
\label{vec-directional-inv}
\begin{pmatrix}
s' \\ x_1' \\ \vdots \\ x_{n-1}'
\end{pmatrix}
=
\begin{pmatrix}
-s & 0 & \cdots & 0 \\
0 & 1 & \cdots & 0\\
\vdots & \vdots & \ddots & \vdots \\
0 & 0 & \cdots & 1
\end{pmatrix}
B
\begin{pmatrix}
s^{\alpha_1} & 0 & \cdots & 0 \\
0 & s^{\alpha_2} & \cdots & 0\\
\vdots & \vdots & \ddots & \vdots \\
0 & 0 & \cdots & s^{\alpha_n}
\end{pmatrix}
\begin{pmatrix}
y_1' \\ y_2' \\ \vdots \\ y_n'
\end{pmatrix}.
\end{equation}
Similarly to (\ref{f-tilde}), let
\begin{equation}
\label{f-tilde-directional}
\hat f_j(s, x_1, \cdots, x_{n-1}) := s^{k+\alpha_j} f_j(s^{-\alpha_1}x_1, \cdots, s^{-\alpha_{n-1}}x_{n-1}, s^{-\alpha_n}),\quad j=1,\cdots, n.
\end{equation}
Then (\ref{vec-directional-inv}) is rewritten as
\begin{equation}
\label{vec-directional-inv-2}
\begin{pmatrix}
s' \\ x_1' \\ \vdots \\ x_{n-1}'
\end{pmatrix}
=
s^{-k}\begin{pmatrix}
-s & 0 & \cdots & 0 \\
0 & 1 & \cdots & 0\\
\vdots & \vdots & \ddots & \vdots \\
0 & 0 & \cdots & 1
\end{pmatrix}
B
\begin{pmatrix}
\hat f_1 \\ \hat f_2 \\ \vdots \\ \hat f_n
\end{pmatrix}.
\end{equation}
The form of $\hat f_i$ in (\ref{f-tilde-directional}) and asymptotic quasi-homogeneity of $f_i$ and $s$-independence of the matrix $B$ immediately yield the following consequence, which is the directional compactifications' analogue of Lemma \ref{lem-order}.

\begin{lem}
\label{lem-order-directional}
The right-hand side of (\ref{vec-directional-inv-2}) is $O(s^{-k})$ as $s\to 0$.
More precisely, the $s$-component of (\ref{vec-directional-inv-2}) is $O(s^{-k+1})$ as $s\to 0$.
\end{lem}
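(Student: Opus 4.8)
The plan is to reduce the whole statement to one boundedness fact, namely that each rescaled component $\hat f_j(s,x_1,\dots,x_{n-1})$ from (\ref{f-tilde-directional}) stays bounded as $s\to 0$, locally uniformly in $(x_1,\dots,x_{n-1})$. Granting this, the conclusion follows from the block form (\ref{vec-directional-inv-2}): its right-hand side equals $s^{-k}$ times the product $\diag(-s,1,\dots,1)\,B\,(\hat f_1,\dots,\hat f_n)^T$. The diagonal factor is manifestly $O(1)$ as $s\to 0$, since its entries are $-s$ and $1$. The matrix $B$ does not depend on $s$ at all; it is the inverse of the displayed matrix whose determinant, expanded along the last row, equals $\pm\alpha_n\neq 0$, so its entries are smooth functions of $(x_1,\dots,x_{n-1})$ alone and hence $O(1)$ on compact sets. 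Finally $(\hat f_1,\dots,\hat f_n)^T$ is $O(1)$ by the boundedness fact. Multiplying, $\diag(-s,1,\dots,1)\,B\,(\hat f_1,\dots,\hat f_n)^T=O(1)$, so the right-hand side of (\ref{vec-directional-inv-2}) is $O(s^{-k})$. For the sharper claim about the $s$-component, note that the first row of $\diag(-s,1,\dots,1)\,B$ is exactly $-s$ times the first row of $B$, so the $s$-component of (\ref{vec-directional-inv-2}) is $s^{-k}\cdot(-s)\cdot(\text{first row of }B)\cdot(\hat f_1,\dots,\hat f_n)^T=O(s^{-k+1})$.

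It remains to establish the boundedness of $\hat f_j$. Setting $r:=1/s$, the definition (\ref{f-tilde-directional}) reads
\[
\hat f_j(s,x_1,\dots,x_{n-1})=r^{-(k+\alpha_j)}f_j\!\left(r^{\alpha_1}x_1,\dots,r^{\alpha_{n-1}}x_{n-1},r^{\alpha_n}\right),
\]
i.e. $f_j$ evaluated along the quasi-homogeneous ray through the direction $\xi:=(x_1,\dots,x_{n-1},1)$. The definition of asymptotic quasi-homogeneity supplies the radial limit uniformly only for directions on $S^{n-1}$, and $\xi\notin S^{n-1}$ in general, so the key step is to transfer the limit to $\xi$. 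I will use that for fixed $\xi\neq 0$ the map $\rho\mapsto\sum_i\rho^{2\alpha_i}\xi_i^2$ is a strictly increasing continuous bijection of $(0,\infty)$ onto itself, so there is a unique $\rho=\rho(\xi)>0$ with $\eta:=(\rho^{\alpha_1}\xi_1,\dots,\rho^{\alpha_n}\xi_n)\in S^{n-1}$; writing $r^{\alpha_i}\xi_i=(r/\rho)^{\alpha_i}\eta_i$ and invoking the homogeneity of $f_{\alpha,k}$ one obtains
\[
r^{-(k+\alpha_j)}f_j\!\left(r^{\alpha_1}\xi_1,\dots,r^{\alpha_n}\xi_n\right)=\rho^{-(k+\alpha_j)}\,(r/\rho)^{-(k+\alpha_j)}f_j\!\left((r/\rho)^{\alpha_1}\eta_1,\dots,(r/\rho)^{\alpha_n}\eta_n\right)\longrightarrow (f_{\alpha,k})_j(\xi)
\]
as $r\to\infty$. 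Because $\xi$ has last coordinate $1$ it stays in a compact subset of $\mathbb{R}^n\setminus\{0\}$ when $(x_1,\dots,x_{n-1})$ ranges over a compact set, so $\rho(\xi)$ stays in a compact subset of $(0,\infty)$ and the convergence is uniform there; in particular $\hat f_j=O(1)$ as $s\to 0$, locally uniformly. (If $f$ is polynomial this step is trivial: each monomial of $f_j$ has quasi-degree $\le k+\alpha_j$, so after the substitution $y_i=s^{-\alpha_i}x_i$, $y_n=s^{-\alpha_n}$ and multiplication by $s^{k+\alpha_j}$ one is left with a nonnegative power of $s$ times a monomial in $x_1,\dots,x_{n-1}$, which is continuous up to $s=0$.)

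I expect the rescaling-to-$S^{n-1}$ argument of the second paragraph to be the only real obstacle; everything else is bookkeeping with the block structure of (\ref{vec-directional-inv-2}), which is already set up in the excerpt. A secondary point that should be stated carefully is the local uniformity in the $x$-variables of all the $O$-bounds: this is what lets $B$ and $\hat f$ contribute $O(1)$ rather than merely being finite pointwise, and it is harmless since $D_s$, hence (\ref{vec-directional-inv-2}), is only used on $\{s>0\}\times K$ for compact $K$ near the horizon $\{s=0\}$. The case of a general base index $i$ and of the minus sign in (\ref{dir-cpt}) is identical after relabeling the coordinates, or equivalently by the symmetry $\iota_\alpha$ on $\mathcal{E}$.
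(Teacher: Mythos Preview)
Your proof is correct and follows exactly the approach the paper indicates: the paper states (without a formal proof environment) that the lemma follows ``immediately'' from the form of $\hat f_i$ in (\ref{f-tilde-directional}), the asymptotic quasi-homogeneity of $f$, and the $s$-independence of $B$. You have simply fleshed out these three ingredients, and in particular your rescaling-to-$S^{n-1}$ argument is a careful justification of the boundedness of $\hat f_j$ that the paper leaves implicit.
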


Lemma \ref{lem-order-directional} leads to introduce the following transformation of time variable.

\begin{dfn}[Time-variable desingularization: directional compactification version]\rm 
Define the new time variable $\tau_d$ by
\begin{equation}
\label{time-desing-directional}
d\tau_d = s(t)^{-k} dt
\end{equation}
equivalently,
\begin{equation*}
t - t_0 = \int_{\tau_0}^\tau s(\tau_d)^k d\tau_d,
\end{equation*}
where $\tau_0$ and $t_0$ denote the correspondence of initial times, and $s(\tau_d)$ is the solution trajectory $s(t)$ under the parameter $\tau$.
We shall call (\ref{time-desing-directional}) {\em the time-variable desingularization of (\ref{vec-directional-inv-2}) of order $k+1$}.
\end{dfn}

The vector field (\ref{vec-directional-inv-2}) is then desingularized in $\tau_d$-time scale:
\begin{equation}
\label{ODE-desing-directional}
\begin{pmatrix}
\frac{ds}{d\tau_d} \\ \frac{dx_1}{d\tau_d} \\ \vdots \\ \frac{dx_{n-1}}{d\tau_d}
\end{pmatrix}
=
\begin{pmatrix}
-s & 0 & \cdots & 0 \\
0 & 1 & \cdots & 0\\
\vdots & \vdots & \ddots & \vdots \\
0 & 0 & \cdots & 1
\end{pmatrix}
B
\begin{pmatrix}
\hat f_1 \\ \hat f_2 \\ \vdots \\ \hat f_n
\end{pmatrix}\equiv g_d(s,x_1,\cdots, x_{n-1}).
\end{equation}

In particular, we have the following proposition.
\begin{prop}
\label{prop-ext-directional}
Let $\tau_d$ be the new time variable given by (\ref{time-desing-directional}).
Then the vector field $g_d$ in (\ref{ODE-desing-directional}) is continuous on $\{s\geq 0\}\times \mathbb{R}^{n-1}$.
\end{prop}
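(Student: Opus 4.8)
The plan is to read off continuity of $g_d$ directly from its explicit form \eqref{ODE-desing-directional}, exactly as Proposition \ref{prop-ext} was deduced from \eqref{ODE-desing}. The vector field $g_d$ is the product of three factors: the diagonal matrix $\diag(-s,1,\dots,1)$, the matrix $B$, and the column vector $(\hat f_1,\dots,\hat f_n)^T$. The first factor is visibly polynomial in $s$, hence continuous on all of $\{s\ge 0\}\times\mathbb{R}^{n-1}$. The second factor $B$ is the inverse of the matrix with first column $(\alpha_1 x_1,\dots,\alpha_{n-1}x_{n-1},\alpha_n)^T$ and the remaining columns forming (a permutation of) the identity; its determinant is $\pm\alpha_n\neq 0$ independently of $(s,x)$, so by Cramer's rule every entry of $B$ is a polynomial in $x_1,\dots,x_{n-1}$ and in particular continuous and $s$-independent. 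So the only point requiring an argument is continuity of each $\hat f_j(s,x_1,\dots,x_{n-1})$ up to and including $\{s=0\}$.

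First I would recall the defining relation \eqref{f-tilde-directional}, $\hat f_j(s,x) = s^{k+\alpha_j} f_j(s^{-\alpha_1}x_1,\dots,s^{-\alpha_{n-1}}x_{n-1},s^{-\alpha_n})$, which a priori only makes sense for $s>0$. For $s>0$ this is manifestly continuous since $f$ is $C^1$. The content is the limit $s\to +0$: here I would invoke the asymptotic quasi-homogeneity of $f$ of type $(\alpha_1,\dots,\alpha_n)$ and order $k+1$. Writing $r = s^{-1}$ and unit-normalizing the argument, the defining limit of asymptotic quasi-homogeneity says precisely that $s^{k+\alpha_j}f_j(s^{-\alpha_1}x_1,\dots,s^{-\alpha_n}) \to (f_{\alpha,k})_j(x_1,\dots,x_{n-1},\pm 1)$ as $s\to +0$, uniformly on compact sets of $x$ (the sign in the last slot being the fixed sign chosen for $y_n$ in \eqref{dir-cpt}). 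Since $f_{\alpha,k}$ is a genuine quasi-homogeneous — hence polynomial, hence continuous — vector field, this shows $\hat f_j$ extends continuously to $\{s=0\}$ with boundary value $(f_{\alpha,k})_j(x,\pm1)$. This is the step I expect to carry the real weight, and it is essentially the directional analogue of the observation, used just below Lemma \ref{lem-order}, that the functions $\tilde f_j$ are continuous on $\overline{\mathcal{D}}$.

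Finally I would assemble the pieces: $g_d$ is a finite matrix product of maps each continuous on $\{s\ge 0\}\times\mathbb{R}^{n-1}$, hence $g_d$ itself is continuous there, which is the claim. The main obstacle, such as it is, is purely bookkeeping: matching the exponents in \eqref{f-tilde-directional} against the scaling exponent $k+\alpha_j$ in the definition of asymptotic quasi-homogeneity and confirming the uniform convergence upgrades pointwise limits to genuine continuity up to the boundary; there is no analytic difficulty beyond what was already handled in Lemma \ref{lem-order-directional} and Proposition \ref{prop-ext}.
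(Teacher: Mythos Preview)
Your argument is correct and matches the paper's approach: the paper does not write out a separate proof of this proposition, treating it as immediate from the preceding discussion (the form of $\hat f_j$ in \eqref{f-tilde-directional}, the asymptotic quasi-homogeneity of $f$, and the $s$-independence of $B$), and your three-factor decomposition is exactly this reasoning made explicit. One small slip: quasi-homogeneous does not imply polynomial in general, but since $f_{\alpha,k}$ is smooth by assumption the continuity conclusion you need is unaffected.
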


{\em Equilibria at infinity} under directional compactifications are then characterized as equilibria for (\ref{ODE-desing-directional}) on the horizon $\mathcal{E} = \{s=0\}$.
Note that $g_d$ is smooth on $\{s\geq 0\}\times \mathbb{R}^{n-1}$ if $f$ is smooth.
In \cite{Mat}, the topological equivalence among desingularized vector fields with quasi-Poincar\'{e} compactifications and with directional compactifications {\em including the horizon} is discussed.
In other words, dynamics of desingularized vector fields around the horizon is topologically identical among these compactifications.
An essence of such a result is the admissibility in the sense of Definition \ref{dfn-qP} for the equivalence, which indicates that the equivalence result is also valid for quasi-parabolic compactifications.

%
%	new section
%
\section{Blow-up criteria and numerical validation procedure}
\label{section-blow-up}

Theorem \ref{thm-diverge} indicates that divergent solutions are described as trajectories on stable manifolds of equilibria on the horizon $\mathcal{E}$ for (\ref{ODE-desing}).
On the other hand, Theorem \ref{thm-diverge} itself does not distinguish blow-up solutions from divergent solutions.
Under additional assumptions to equilibria on $\mathcal{E}$, we can characterize blow-up solutions from the viewpoint of dynamical systems.
In this section, we firstly review a criterion of blow-ups discussed in \cite{Mat}.
Then we provide a methodology for explicit estimates of maximal existence time $t_{\max}$.
Finally, we give an algorithm for validating blow-up solutions with computer assistance.

%
%	new subsection
%
\subsection{Blow-up criterion}
Firstly we review an abstract result of blow-up criterion via quasi-homogeneous compactifications.
For a squared matrix $A$, ${\rm Spec}(A)$ denotes the set of eigenvalues of $A$.

\begin{prop}[Stationary blow-up, \cite{Mat}]
\label{prop-blowup}
Assume that (\ref{ODE-original}) has an equilibrium at infinity in the direction $x_\ast$.
Suppose that the desingularized vector field $g$ in (\ref{ODE-desing}) is $C^1$ on $\overline{\mathcal{D}}$, and that $x_\ast$ is hyperbolic for (\ref{ODE-desing}); namely all elements in ${\rm Spec}(Dg(x_\ast))$ are away from the imaginary axis.
Then the solution $y(t)$ of (\ref{ODE-original}) whose image $x = T(y)$ is on $W^s(x_\ast)$ in the desingularized vector field (\ref{ODE-desing}) satisfies $t_{\max} < \infty$; namely, $y(t)$ is a blow-up solution.
Moreover,
\begin{equation*}
p(y(t)) \sim c(t_{\max} - t)^{-1/k}\quad \text{ as }\quad t\to t_{\max},
\end{equation*}
where $k+1$ is the order of asymptotically quasi-homogeneous vector field $f$.
Finally, if the $i$-th component $(x_\ast)_i$ of $x_\ast$ is not zero, then we also have
\begin{equation*}
y_i(t) \sim c(t_{\max} - t)^{-\alpha_i /k}\quad \text{ as }\quad t\to t_{\max}.
\end{equation*}
\end{prop}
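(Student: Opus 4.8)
The plan is to transfer everything to the desingularized flow \eqref{ODE-desing} in the rescaled time $\tau$ of \eqref{time-desing}, monitor the single scalar $\kappa(y(\tau))$ along the trajectory lying on $W^s(x_\ast)$, show it diverges exponentially fast in $\tau$, and then read off the three assertions by converting back to the original time $t$. Concretely, setting $x(\tau)=T(y(t(\tau)))$ and differentiating $\kappa=\kappa(y)$ along the flow, using \eqref{f-tilde} together with $y_j=\kappa^{\alpha_j}x_j$ and $\alpha_j\beta_j=c$, one obtains
\begin{equation*}
\frac{d\kappa}{d\tau}=\Psi(x)\,\kappa,\qquad \Psi(x):=\sum_{j=1}^n(\nabla\kappa)_j\,\kappa^{\alpha_j-1}\,\tilde f_j(x),
\end{equation*}
where $\Psi$ is exactly the coefficient already appearing in \eqref{ODE-desing}; by (A2) and the asymptotic quasi-homogeneity of $f$ it extends continuously to $\overline{\mathcal D}$, with $\Psi|_{\mathcal E}=\sum_j\alpha_j^{-1}x_j^{2\beta_j-1}(\tilde f_{\alpha,k})_j(x)$. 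Put $\lambda_\ast:=\Psi(x_\ast)$. Since $g$ is $C^1$ on the compact set $\overline{\mathcal D}$ and $\mathcal E$ is invariant (Theorem \ref{thm-dyn-infty}), the trajectory $x(\tau)$ stays in $\overline{\mathcal D}$ and is defined for all $\tau\ge\tau_0$; as it lies on $W^s(x_\ast)$ it converges to $x_\ast\in\mathcal E$, so by (A0)--(A1) we have $\kappa(y(\tau))=p(y(\tau))(1+o(1))\to+\infty$, and $t(\tau)$ increases to $t_{\max}:=\lim_{\tau\to\infty}t(\tau)\le\infty$.

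The first --- and, I expect, the genuinely delicate --- point is the sign of $\lambda_\ast$. From $\frac{d}{d\tau}\log\kappa(y(\tau))=\Psi(x(\tau))\to\lambda_\ast$ and $\kappa(y(\tau))\to+\infty$, the value $\lambda_\ast$ cannot be negative. To rule out $\lambda_\ast=0$ I would combine hyperbolicity with Theorem \ref{thm-dyn-infty}: since $\mathcal E$ is invariant, $Dg(x_\ast)$ preserves $T_{x_\ast}\mathcal E$, and the remaining (normal) eigenvalue of $Dg(x_\ast)$ is a positive multiple of $-\lambda_\ast$ --- it governs the linearized dynamics of $u:=1-p(x)^{2c}$ near $x_\ast$ through part 3 of Theorem \ref{thm-dyn-infty}; hyperbolicity forces that eigenvalue, hence $\lambda_\ast$, to be nonzero, and the fact that the trajectory approaches $x_\ast$ from inside $\mathcal D$ (that is, with $u>0$ and $u\to0$) forces the normal direction to be contracting, i.e.\ $\lambda_\ast>0$. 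Granting $\lambda_\ast>0$, for $\tau$ large we have $\kappa(y(\tau))\ge c_1 e^{(\lambda_\ast/2)\tau}$, so by \eqref{time-desing}
\begin{equation*}
t_{\max}-t_0=\int_{\tau_0}^{\infty}\kappa(y(\tau))^{-k}\,d\tau<\infty,
\end{equation*}
which proves $t_{\max}<\infty$; together with $p(y(t))\to\infty$ as $t\to t_{\max}$ this shows $y$ is a blow-up solution.

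It remains to sharpen the exponential bound into an asymptotic equivalence. The stable manifold theorem gives $|x(\tau)-x_\ast|\le Ce^{-\mu\tau}$ for some $\mu>0$; since $\Psi$ is $C^1$ near $x_\ast$ --- automatic for the quasi-parabolic compactification by Proposition \ref{prop-para-extension}, and guaranteed in general by the $C^1$-hypothesis on $g$, which is exactly why insisting on a $C^1$ desingularized field up to the horizon matters --- we obtain $\int_{\tau_0}^{\infty}|\Psi(x(\sigma))-\lambda_\ast|\,d\sigma<\infty$, whence $\int_{\tau_0}^{\tau}\Psi(x(\sigma))\,d\sigma=\lambda_\ast\tau+L+o(1)$ and therefore $\kappa(y(\tau))=C_\ast e^{\lambda_\ast\tau}(1+o(1))$ with $C_\ast>0$. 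Substituting this into \eqref{time-desing},
\begin{equation*}
t_{\max}-t=\int_{\tau}^{\infty}\kappa(y(\sigma))^{-k}\,d\sigma=\frac{1}{k\lambda_\ast}\,\kappa(y(t))^{-k}\,(1+o(1)),
\end{equation*}
so $\kappa(y(t))\sim(k\lambda_\ast)^{-1/k}(t_{\max}-t)^{-1/k}$; by (A1), $p(y(t))\sim\kappa(y(t))$, which is the stated rate for $p(y(t))$, and since $y_i(t)=\kappa(y(t))^{\alpha_i}x_i(\tau)$ with $x_i(\tau)\to(x_\ast)_i$, the case $(x_\ast)_i\ne0$ yields $y_i(t)\sim(x_\ast)_i(k\lambda_\ast)^{-\alpha_i/k}(t_{\max}-t)^{-\alpha_i/k}$. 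The only non-routine ingredient in this program is the positivity of $\lambda_\ast$; everything after it is elementary ODE asymptotics together with the admissibility relation (A1).
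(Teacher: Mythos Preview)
The paper does not prove Proposition~\ref{prop-blowup}; it is quoted from \cite{Mat} and immediately followed by commentary, with no argument supplied. So there is nothing in the present paper to compare against, and your write-up is effectively a reconstruction of the argument from \cite{Mat}. As such it is essentially correct and follows the natural route: reduce to the scalar equation $\dot\kappa=\Psi(x)\kappa$ with $\Psi$ continuous up to $\mathcal E$, identify $\lambda_\ast=\Psi(x_\ast)$ with a nonzero multiple of the normal eigenvalue of $Dg(x_\ast)$ via Theorem~\ref{thm-dyn-infty}(3), use hyperbolicity together with $\kappa\to\infty$ to force $\lambda_\ast>0$, and then convert back through \eqref{time-desing}.

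One small point deserves tightening. You assert that $\Psi$ is $C^1$ near $x_\ast$ ``by the $C^1$-hypothesis on $g$'', but $g_i=\tilde f_i-\alpha_i x_i\Psi$ only yields $\Psi$ as a $C^1$ function once you also know $\tilde f_i$ is $C^1$ up to $\mathcal E$; that is not part of the stated hypotheses for a general admissible compactification (it is, as you note, automatic in the quasi-parabolic case via Proposition~\ref{prop-para-extension}). Fortunately you do not actually need $C^1$ regularity of $\Psi$ for the asymptotics: from $\frac{d}{d\tau}(t_{\max}-t)=-\kappa^{-k}$ and $\frac{d}{d\tau}\kappa^{-k}=-k\Psi(x(\tau))\kappa^{-k}$, L'H\^opital's rule with $\Psi(x(\tau))\to\lambda_\ast$ gives $(t_{\max}-t)/\kappa^{-k}\to 1/(k\lambda_\ast)$ directly, using only continuity of $\Psi$. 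With this replacement the argument goes through under exactly the hypotheses stated.
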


Proposition \ref{prop-blowup} gives us the slogan that {\em hyperbolic equilibria at infinity induce blow-up solutions}.
In the above original version of blow-up criterion stated as above, the $C^1$-smoothness of the desingularized vector field $g$ in (\ref{ODE-desing}) is {\em assumed}, because such a smoothness is nontrivial for quasi-Poincar\'{e} compactifications even if $f$ is sufficiently smooth. 
On the other hand, Proposition \ref{prop-para-extension} shows that stability analysis of equilibria at infinity always makes sense with quasi-parabolic compactifications, because in which case the desingularized vector field $g$ is always $C^1$ on $\overline{\mathcal{D}}$ if $f$ is $C^1$.
\par
Needless to say, the above proposition does not provide information of concrete blow-up time $t_{\max}$ depending on initial data.
In the successive subsections, we provide a validation procedure of blow-up solutions with explicit estimates of blow-up times.

%
%	new subsection
%
\subsection{Lyapunov functions around asymptotically stable equilibria}
Our main tool for validating blow-up time is {\em Lyapunov function}, which describes the monotonous behavior of trajectories in terms of its value.
As the general setting, consider the vector field
\begin{equation}
\label{ODE}
\frac{dx}{dt} = f(x),\quad f :\mathbb{R}^n\to \mathbb{R}^n\text{: smooth}.
\end{equation}
For $x\in \mathbb{R}^n$, $Df(x)$ denotes the Jacobian matrix of $f$ at $x$.

\begin{prop}[Lyapunov function for stable equilibria, \cite{MHY2016}]
\label{prop-Lyap}
Let $x_\ast$ be an equilibrium for (\ref{ODE}) in a compact star-shaped set $N \subset \mathbb{R}^n$.
Assume that there is a real symmetric matrix $Y$ such that the matrix
\begin{equation}
\label{neg-def-1}
A(x) := Df(x)^TY + YDf(x)
\end{equation}
is strictly negative definite for all $x\in N$.
Then the functional $L:\mathbb{R}^n\to \mathbb{R}$ given by
\begin{equation}
\label{Lyap-present}
L(x) := (x-x_\ast)^TY(x-x_\ast)
\end{equation}
is a Lyapunov function on $N$ such that $dL/dt$ vanishes at $x_\ast$.
In particular, $x_\ast$ is the unique equilibrium in $N$.
If further the matrix $Y$ is chosen to be positive definite, then the equilibrium $x_\ast$ is asymptotically stable.
\end{prop}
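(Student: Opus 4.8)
The plan is to verify that $L$ as defined in \eqref{Lyap-present} is a strict Lyapunov function by differentiating it along trajectories of \eqref{ODE}, and then to reduce the asymptotic-stability claim to a standard Lyapunov stability theorem once $Y$ is positive definite. First I would compute $dL/dt$ along a solution $x(t)$ of \eqref{ODE}: since $\dot x = f(x)$ and $Y = Y^T$, the chain rule gives
\begin{equation*}
\frac{dL}{dt} = \dot x^T Y (x - x_\ast) + (x - x_\ast)^T Y \dot x = f(x)^T Y (x - x_\ast) + (x - x_\ast)^T Y f(x).
\end{equation*}
To exploit the hypothesis \eqref{neg-def-1} I would write $f(x) = f(x) - f(x_\ast)$ (using $f(x_\ast)=0$) and apply the mean value / fundamental-theorem-of-calculus identity $f(x) - f(x_\ast) = \left(\int_0^1 Df(x_\ast + s(x - x_\ast))\,ds\right)(x - x_\ast)$, which is where the star-shapedness of $N$ is needed: the whole segment from $x_\ast$ to $x$ lies in $N$, so $A$ is strictly negative definite at every point of that segment. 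Setting $M(x) := \int_0^1 Df(x_\ast + s(x-x_\ast))\,ds$, one gets
\begin{equation*}
\frac{dL}{dt} = (x-x_\ast)^T\left(M(x)^T Y + Y M(x)\right)(x-x_\ast) = (x-x_\ast)^T \left(\int_0^1 A(x_\ast + s(x-x_\ast))\,ds\right)(x-x_\ast),
\end{equation*}
and since each $A(x_\ast + s(x-x_\ast))$ is strictly negative definite, so is the average; hence $dL/dt < 0$ for all $x \in N \setminus \{x_\ast\}$ and $dL/dt = 0$ at $x_\ast$ (where $x - x_\ast = 0$). This also immediately gives uniqueness of the equilibrium in $N$: at any other zero $\tilde x$ of $f$ in $N$ we would again have $0 = f(\tilde x) - f(x_\ast) = M(\tilde x)(\tilde x - x_\ast)$, and pairing with $Y(\tilde x - x_\ast)$ forces $(\tilde x - x_\ast)^T(\text{avg of }A)(\tilde x - x_\ast) = 0$, contradicting negative definiteness unless $\tilde x = x_\ast$.

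For the final assertion, suppose in addition $Y$ is positive definite. Then $L(x) = (x-x_\ast)^T Y (x-x_\ast) \ge \lambda_{\min}(Y)\|x-x_\ast\|^2 > 0$ for $x \ne x_\ast$ and $L(x_\ast) = 0$, so $L$ is a positive-definite function in the classical Lyapunov sense on a neighborhood of $x_\ast$ contained in $N$; combined with $dL/dt < 0$ on $N\setminus\{x_\ast\}$, Lyapunov's asymptotic stability theorem yields that $x_\ast$ is asymptotically stable. I would quote this last step rather than reprove it, citing the standard reference (or \cite{MHY2016}).

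The main obstacle — really the only subtle point — is making the passage from the bilinear form $f(x)^T Y(x-x_\ast) + (x-x_\ast)^T Y f(x)$ to the negative-definite quantity $(x-x_\ast)^T A(\cdot)(x-x_\ast)$ rigorous. This requires (i) $C^1$ regularity of $f$ to justify the integral representation of $f(x)-f(x_\ast)$, and (ii) the star-shaped hypothesis on $N$ so that the entire integration segment stays inside $N$, which is exactly what guarantees $A$ is negative definite along it; without star-shapedness the sign of $dL/dt$ away from $x_\ast$ is not controlled. Everything else is a routine computation, and I would present it compactly.
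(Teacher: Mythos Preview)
Your argument is correct and is the standard proof of this result: differentiate $L$ along trajectories, use the integral mean-value representation $f(x)-f(x_\ast)=\left(\int_0^1 Df(x_\ast+s(x-x_\ast))\,ds\right)(x-x_\ast)$ (which is where star-shapedness of $N$ with respect to $x_\ast$ is used), and conclude strict decrease of $L$ from the uniform negative definiteness of $A(\cdot)$ along the segment. The uniqueness and asymptotic-stability parts are handled correctly as well.

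Note, however, that the paper does not supply its own proof of this proposition: it is stated as a citation from \cite{MHY2016} and used as a black box. So there is no in-paper argument to compare against; your write-up simply fills in what the authors chose to quote. One small wording point: the star-shapedness hypothesis should be read as ``star-shaped with respect to $x_\ast$'' for your segment argument to go through, which is implicit in the statement and in how the result is used later.
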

We shall call the compact set $N$ satisfying the assumption in Proposition \ref{prop-Lyap} a {\em Lyapunov domain} of $x_\ast$.

\begin{rem}[The present choice of $L(x)$]\rm
\label{rem-Lyap}
Roughly speaking, the matrix $Y$ contains information of sign of the real part of each ${\rm Spec}(Df(x))$ and a matrix representing change of coordinates.
In the present case, we only treat {\em asymptotically stable} equilibria, which indicates that signs of ${\rm Re}\lambda$ for any $\lambda\in {\rm Spec}(Df(x))$ should be identically negative.
Before validating an equilibrium $x_\ast$, it should be usually computed {\em in a numerical (i.e., non-rigorous computation) sense} with associated eigenvalues for finding candidates of validating equilibrium.
\par
When we numerically compute eigenvalues of a Jacobian matrix, say $Df(x)$, we also compute eigenvectors to construct the eigenmatrix $X$, which represents change of coordinates to an {\em orthogonal} one. 
In \cite{MHY2016}, the matrix $Y$ in (\ref{Lyap-present}) is typically defined as $Y = {\rm Re}(X^{-H}X^{-1})$, where $X^{-H} := (X^{-1})^H$ and $\ast^H$ denotes the Hermitian transpose of the object (vectors or matrices).
Note that, in which case, the equilibrium $x_\ast$ is shown to be asymptotically stable in $N$.
However, there are cases that an eigenvalue has multiplicity larger than $1$, in which cases the validation is failed because the computed eigenmatrix $X$ typically becomes singular.
Indeed, our example below contains such a case.
\par
One way to avoid such difficulty is to use the {\em real Schur decomposition} of the matrix $Df(x)$ instead of eigenpair computations.
See Appendix \ref{appendix-Schur} about a quick review of Schur decompositions of matrices.
Let $Q$ be a matrix such that $Q^TDf(x)Q$ is a real upper triangle matrix for some point $x$.
Then we can check the sign of ${\rm Re}\lambda$ for all $\lambda \in {\rm Spec}(Df(x))$.
We then choose the matrix $Q$ as a change of coordinates instead of the eigenmatrix $X$.
In such a case, the corresponding matrix $Y$ is $Y={\rm Re}(Q^{-H}Q^{-1})$.
When we use the real Schur decomposition, $Q$ is an orthogonal real matrix. 
Then we take $Y = I$: the identity matrix, which shows that our Lyapunov function $L$ becomes $L(x) = \|x-x_\ast\|^2$. 
This fact also shows that $x_\ast$ is asymptotically stable in $N$.
\end{rem}

Once we have validated a Lyapunov function $L$ as well as the Lyapunov domain $\tilde N$ of an {\em asymptotically stable} equilibrium $x_\ast$, we can easily characterize global trajectory asymptotic to $x_\ast$.
For a positive number $\epsilon > 0$, assume that $N := \{x\in \mathbb{R}^n\mid L(x) \leq \epsilon \}\subset \tilde N$.
Let $\{x(t)\}_{t\in [0,t_N]}$ be a trajectory of vector field (\ref{ODE}) for some $t_N > 0$ and 
assume that $x(t_N) \in {\rm int}\,N$.
Then the trajectory $x(t)$ behaves {\em so that it strictly decreases $L$}.
Since $N =  \{x\in \mathbb{R}^n \mid L(x) \leq \epsilon \}$, then the trajectory can be continued until it tend to a point on $\{L=0\}$, in which case $x=x_\ast$.
Therefore the trajectory $\{x(t)\}_{t\in [0,t_N]}$ is extended to the global trajectory $\{x(t)\}_{t\in [0,\infty)}$ satisfying $x(t)\to x_\ast$ as $t\to \infty$, as desired.

%
%	new subsection
%
\subsection{Explicit estimate of blow-up time with computer assistance}
\label{section-explicit}
Here we provide an explicit estimate methodology of blow-up times.
The basic idea is {\em Lyapunov tracing} discussed in \cite{MHY2016, TMSTMO}; namely, computation of the maximal existence time
\begin{equation*}
t_{\max} = \int_0^\infty \frac{d\tau}{\kappa(T^{-1}(x(\tau)))^k},\quad \text{ or }\quad
t_{\max} = \int_0^\infty s(\tau)^k d\tau,
\end{equation*}
of trajectory $\{y(t) = T^{-1}(x(t))\}$ in terms of Lyapunov functions around an equilibrium $x_\ast$ on the horizon $\mathcal{E}$.
Theorem \ref{prop-blowup} shows that blow-up solutions correspond to trajectories on {\em stable manifolds of hyperbolic equilibria on $\mathcal{E}$}.
According to this fact and preceding methodology in \cite{TMSTMO}, we validate asymptotic behavior of blow-up solutions by the following steps.
A quasi-homogeneous compactification $T:\mathbb{R}^n\to \mathcal{D}$ and time-variable desingularization are assumed to be given in advance.
\begin{enumerate}
\item Validate an equilibrium $x_\ast \in \mathcal{E}$.
\item Validate a Lyapunov function of the form (\ref{Lyap-present}) around $x_\ast$ as well as its Lyapunov domain $\tilde N$.
\end{enumerate}

Now we are ready to validate blow-up times with computer assistance.
Let $T$ be an admissible quasi-homogeneous compactification with type $\alpha$.
Assume that the desingularized vector field (\ref{ODE-desing}) is $C^1$ on $\overline{\mathcal{D}}$, which is always the case when $T = T_{para}$ and $f$ is $C^1$.
Let $x_\ast \in \mathcal{E}$ be an equilibrium on the horizon for (\ref{ODE-desing}).
Explicit estimates of maximal existence time $t_{\max}$ actually depend on the choice of compactifications $T$ and time-variable desingularizations.
In what follows we fix $T$ as the quasi-parabolic compactification $T_{para}$ (associated with type $\alpha$) and quasi-parabolic time-variable desingularization (\ref{time-desing-para}).
\par
Assume that we have computed the global trajectory $\{x(\tau)\}_{\tau \in [0,\infty)}$ for (\ref{ODE-desing-para}) such that $x(\tau)\in N = \{x\in \mathcal{D}\mid L(x)\leq \epsilon\}\subset \tilde N$ for all $\tau\in [\tau_N,\infty)$ and some $\epsilon > 0$, where $\tau_N > 0$ and $\tilde N$ is a Lyapunov domain of an asymptotically stable equilibrium $x_\ast\in \mathcal{E}$\footnote{
%	footnote
In this case, the set $N$ is contained in the stable manifold $W^s(x_\ast)$ of $x_\ast$.
%	footnote : end
}.
The maximal existence time of $x(\tau)$ in $t$-timescale is then
\begin{align*}
\notag
t_{\max} &= t_N + \int_{\tau_N}^\infty \left(1-\frac{2c-1}{2c}\kappa^{-1}\right)\frac{d\tau}{\kappa(T^{-1}(x(\tau)))^k}\\
\label{tmax-para}
	 &=  t_N + \int_{\tau_N}^\infty \left(1-\frac{2c-1}{2c}(1-p(x(\tau))^{2c})\right)(1-p(x(\tau))^{2c})^k d\tau,
\end{align*}
where
\begin{equation}
\label{t_N}
t_N = \int_0^{\tau_N} \left(1-\frac{2c-1}{2c}(1-p(x(\tau))^{2c})\right)(1-p(x(\tau))^{2c})^k d\tau.
\end{equation}
Then compute an upper bound of $t_{\max}$ by
\begin{equation}
\label{tmax-estimate}
0 < t_{\max} - t_N \leq  \frac{1}{c_{\tilde N}c_1} \int_0^{L(x(\tau_N))} \frac{C_{n,\alpha,N}(L)^{k}}{L}dL
\leq \frac{1}{c_{\tilde N}c_1} \int_0^{\epsilon} \frac{C_{n,\alpha,N}(L)^{k}}{L}dL \equiv \overline{C_{n,\alpha,k, N}}(\epsilon),
\end{equation}
where $L = L(x)$ is the value of validated Lyapunov function at $x\in N$,  
$c_1$ and $c_{\tilde N}$ are constants involving eigenvalues of $Y$ and $A(x)$ whose details are shown in \cite{TMSTMO}.
This inequality comes from the property of Lyapunov function following the definition:
\begin{equation*}
\frac{dL}{d\tau}(x(\tau))_{\tau = 0} \leq -c_1 c_{\tilde N}L(x(0)),
\end{equation*}
which is strictly negative as long as $x(0)\not = x_\ast$.
See \cite{TMSTMO} for the detail.
A function $C_{n,\alpha,N}(L)$ depends on the value $L$ of Lyapunov function satisfying
\[
	\left|1-p(x)^{2c}\right|\le C_{n,\alpha,N}(L)\quad \text{ for }\quad x\in \tilde N.
\]
Concrete estimates of the function $C_{n,\alpha, N}(L)$ we have used in practical validations are derived in Appendix \ref{appendix-estimates}.
Since $L(x(\tau_N)) \leq \epsilon$, the rightmost side of (\ref{tmax-estimate}) is an integral {\em on a compact interval}.
If we can estimate the right-hand side of (\ref{tmax-estimate}) being finite, we obtain a finite upper bound of $t_{\max}$, which shows that the trajectory $\{y(t)\}_{t\in [0,t_{\max})} = \{T^{-1}(x(\tau))\}_{\tau\in [0,\infty)}$ is a blow-up solution of the original initial value problem (\ref{ODE-original}) with blow-up time $t_{\max}\in [t_N, t_N + \overline{C_{n,\alpha,k,N}}(\epsilon)]$.

\par
\bigskip
The similar estimate is derived in the case of directional compactifications.
In such a case with the same setting as above, the maximal existence time $t_{\max}$ is computed as
\begin{equation*}
t_{\max} = \int_0^\infty s(\tau_d)^k d\tau_d = t_N + \int_{\tau_N}^\infty s(\tau_d)^k d\tau_d,
\end{equation*}
where $t_N = \int_0^{\tau_N} s(\tau_d)^k d\tau_d$.
Assume that the trajectory $\{(s(\tau), x(\tau))\}_{\tau\in [0,\tau_N]}$ enters inside ${\rm int}N:= \{L(s,x) < \epsilon\} \subset \tilde N$, where $\tilde N$ is a Lyapunov domain of $(0,x_\ast) \in \mathcal{E}$.
Then we have
\begin{align}
\int_{\tau_N}^\infty s(\tau_d)^k d\tau_d & \leq \int_{\tau_N}^\infty (|s|^2 + \|x-x_\ast\|^2 )^k d\tau_d\nonumber\\
	&\leq \int_{\tau_N}^\infty \left\{ c_1 L(s(\tau),x(\tau)) \right\}^{k/2} d\tau_d\nonumber\\
	&\leq - \int_{L(s(\tau_N),x(\tau_N))}^0 \left\{ c_1 L \right\}^{k/2} \frac{dL}{\tilde c_N c_1 L}\nonumber\\
	&=  \frac{1}{\tilde c_N \sqrt{c_1}} \int_0^{L(s(\tau_N),x(\tau_N))} L^{\frac{k}{2}-1} dL\nonumber\\
	&\leq  \frac{1}{\tilde c_N \sqrt{c_1}} \left[ 2L^{k/2} \right]_0^{\epsilon} = \frac{2}{\tilde c_N} \sqrt{\frac{\epsilon^k}{c_1}} \equiv \overline{C_{n,k,N}}(\epsilon).\label{tmax-estimate-dir}
\end{align}
The rightmost quantity gives an upper bound of $t_{\max}$.
More precisely, the blow-up time $t_{\max}$ is a value in $[t_N, t_N + \overline{C_{n,k,N}}(\epsilon)]$.

%
%	new subsection
%
\subsection{Validation procedure of blow-up solutions}

Now we have obtained an explicit estimate of blow-up times.
Theorem \ref{thm-diverge} indicates that blow-up solutions correspond to trajectories on stable manifolds of (hyperbolic) equilibria at infinity, which can be validated by standard numerical validation techniques of dynamical systems (e.g., \cite{kv}).

Our algorithm for validating blow-up solutions is the following, which is essentially same as that in the preceding work \cite{TMSTMO}.

\begin{alg}[Validation of blow-up solutions with quasi-parabolic compactifications]
\label{alg-validation1}
Let $f:\mathbb{R}^n\to \mathbb{R}^n$ be an asymptotically quasi-homogeneous, smooth vector field of type $\alpha = (\alpha_1,\cdots, \alpha_n)$ and order $k+1$.
Choose natural numbers $\beta_1,\cdots, \beta_n, c\in \mathbb{N}$ so that (\ref{LCM}) holds.
Let $T_{para}:\mathbb{R}^n\to \mathcal{D}$ be a quasi-parabolic compactification and $\frac{dx}{d\tau} = g(x)$ be the associated desingularized vector field with time-variable desingularization (\ref{time-desing-para}).
\begin{enumerate}
\item Validate an equilibrium at infinity $x_\ast$; namely, a zero of $g$ on $\mathcal{E} = \partial \mathcal{D}$.
\item Construct a compact, star-shaped set $\tilde N\subset \overline{\mathcal{D}}$ containing $x_\ast$ so that the negative definiteness of (\ref{neg-def-1}) on $\tilde N$ with a positive definite, real symmetric matrix $Y$ is validated as large as possible.
If we cannot find such a set $\tilde N$, return {\tt failed}.
\item 
Let $L(x)=(x-x_\ast)^T Y (x-x_\ast)$ be the validated Lyapunov function on $\tilde N$. 
Set $\epsilon > 0$ as the maximal value so that $N := \{x\in \mathbb{R}^n \mid L(x)\leq \epsilon\} \subset \tilde N$.
Integrate the ODE $(dx/d\tau) = g(x)$ with initial data $x_0\in \mathcal{D}$ until $\tau = \tau_N$ so that $x(\tau_N)\in {\rm int}N$.
If we cannot find such $x(\tau_N)$, return {\tt failed}.
\item Compute $\overline{C_{n,\alpha,k, N}}(\epsilon)$. 
Simultaneously, compute $t_N$ following (\ref{t_N}).
If $\overline{C_{n,\alpha,k, N}}(\epsilon)$ can be validated to be finite, return {\tt succeeded}.
\end{enumerate}
\end{alg}

The similar algorithm with directional compactifications is derived as follows.

\begin{alg}[Validation of blow-up solutions with directional compactifications]
\label{alg-validation2}
Let $f:\mathbb{R}^n\to \mathbb{R}^n$ be an asymptotically quasi-homogeneous, smooth vector field of type $\alpha = (\alpha_1,\cdots, \alpha_n)$ and order $k+1$.
Let $T:\mathbb{R}^n\to \{s>0\}\times \mathbb{R}^{n-1}$ be a directional compactification determined by (\ref{dir-cpt}) and $\frac{d(s,x)}{d\tau_d} = g_d(s,x)$ be the associated desingularized vector field with time-variable desingularization (\ref{time-desing}).
\begin{enumerate}
\item Validate an equilibrium at infinity $(0,x_\ast)$; namely, a zero of $g_d$ on $\mathcal{E} = \{s=0\}$.
\item Construct a compact, star-shaped set $\tilde N\subset \{s\geq 0\}\times \mathbb{R}^{n-1}$  containing $(0, x_\ast)$ so that the negative definiteness of (\ref{neg-def-1}) on $\tilde N$ with a positive definite, real symmetric matrix $Y$ is validated as large as possible.
If we cannot find such a set $\tilde N$, return {\tt failed}.
\item 
Let $L(s,x)= ((s,x)- (0,x_\ast))^T Y((s,x)- (0,x_\ast))$ be the validated Lyapunov function on $\tilde N$. 
Set $\epsilon > 0$ as the maximal value so that $N := \{(s,x)\in \{s\geq 0\}\times \mathbb{R}^{n-1} \mid L(s,x)\leq \epsilon\} \subset \tilde N$.
Integrate the ODE $(d(s,x)/d\tau_d) = g_d(s,x)$ with initial data $(s_0,x_0)\in \{s> 0\}\times \mathbb{R}^{n-1}$ until $\tau = \tau_N$ so that $(s(\tau_N), x(\tau_N))\in {\rm int}N$.
If we cannot find such $(s(\tau_N), x(\tau_N))$, return {\tt failed}.
\item Compute $\overline{C_{n,k, N}}(\epsilon)$. 
Simultaneously, compute $t_N = \int_0^{\tau_N} s(\tau_d)^k d\tau_d$.
If $\overline{C_{n,k, N}}(\epsilon)$ can be validated to be finite, return {\tt succeeded}.
\end{enumerate}
\end{alg}

Under the successful operations of Algorithm \ref{alg-validation1} or \ref{alg-validation2}, we have the following results, which show the validation of blow-up solutions.
The proofs immediately follow from properties of compactifications and Lyapunov functions.

\begin{thm}[Validation of blow-up solutions with quasi-parabolic compactifications]
\label{thm-blowup-validate}
Let $y_0\in \mathbb{R}^n$.
Assume that Algorithm \ref{alg-validation1} returns {\tt succeded} with $x_0 = T(y_0)$.
Then the solution $\{y(t) = T^{-1}(x(t))\}$ of (\ref{ODE-original}) with $y(0) = y_0$ such that 
\begin{equation*}
\{x(\tau)\mid \tau \in [0, \infty), x(\tau)\to x_\ast \text{ as }\tau\to \infty\}
\end{equation*}
via a time-variable desingularization (\ref{time-desing}) and an asymptotically stable equilibrium $x_\ast \in \mathcal{E}$ is a blow-up solution with the blow-up time $t_{\max} \in [\tau_N, \tau_N + \overline{C_{n,\alpha,k, N}}(\epsilon)]$.
\end{thm}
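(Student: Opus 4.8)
The plan is to assemble the conclusion from the structural results proved above, with the quantitative part supplied by the Lyapunov tracing estimate of Section~\ref{section-explicit}.

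\textbf{Setting up from a successful run.} First I would unpack what it means for Algorithm~\ref{alg-validation1} to return {\tt succeeded}. Step~1 yields a validated equilibrium $x_\ast\in\mathcal{E}$ of the desingularized vector field (\ref{ODE-desing-para}); step~2 yields a compact star-shaped Lyapunov domain $\tilde N\ni x_\ast$ together with a positive definite real symmetric $Y$ making $A(x)=Dg(x)^TY+YDg(x)$ strictly negative definite on $\tilde N$; step~3 yields $\tau_N>0$, $\epsilon>0$ with $N=\{L\le\epsilon\}\subset\tilde N$, and a trajectory $\{x(\tau)\}_{\tau\in[0,\tau_N]}$ of (\ref{ODE-desing-para}) with $x(\tau_N)\in\Int N$. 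By Proposition~\ref{prop-Lyap}, $L(x)=(x-x_\ast)^TY(x-x_\ast)$ is a Lyapunov function on $\tilde N$, $x_\ast$ is its unique equilibrium there, and since $Y$ is positive definite $x_\ast$ is asymptotically stable; in particular $\Spec(Dg(x_\ast))$ lies strictly in the open left half-plane, so $x_\ast$ is hyperbolic. By Proposition~\ref{prop-para-extension} the vector field $g$ attached to $T_{para}$ and the time-variable desingularization (\ref{time-desing-para}) is $C^1$ on $\overline{\mathcal{D}}$, so the hypotheses of Proposition~\ref{prop-blowup} are in place.

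\textbf{Global continuation and blow-up.} Next I would extend $x(\tau)$ forward in $\tau$. Since $L$ strictly decreases along nonconstant orbits in $\tilde N$ and $N$ is a sublevel set of $L$, an orbit that enters $\Int N$ at $\tau=\tau_N$ cannot leave $N$ afterwards; hence $x(\tau)$ is defined for all $\tau\in[\tau_N,\infty)$, stays in the compact set $N$, and $L(x(\tau))\searrow 0$, i.e. $x(\tau)\to x_\ast$ as $\tau\to\infty$ (this is exactly the continuation argument following Proposition~\ref{prop-Lyap}). Thus $x(\tau)$ is a global trajectory of (\ref{ODE-desing-para}) on $W^s(x_\ast)$, and since $p(x(\tau))\to 1$, its preimage $y(t)=T_{para}^{-1}(x(\tau(t)))$ satisfies $p(y(t))\to\infty$ in the direction $x_\ast$. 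Applying Proposition~\ref{prop-blowup} (and Theorem~\ref{thm-diverge} for the direction bookkeeping), the solution of (\ref{ODE-original}) with $y(0)=y_0=T_{para}^{-1}(x(0))$ has $t_{\max}<\infty$ and is therefore a blow-up solution, with the quasi-homogeneous blow-up rate recorded there.

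\textbf{The explicit interval.} Finally, for the enclosure of $t_{\max}$ I would invoke (\ref{tmax-estimate}). Writing $t_{\max}=t_N+\int_{\tau_N}^\infty\bigl(1-\tfrac{2c-1}{2c}(1-p(x)^{2c})\bigr)(1-p(x)^{2c})^k\,d\tau$ with $t_N$ as in (\ref{t_N}), the integrand is nonnegative, so $t_{\max}\ge t_N$. For the upper bound, the Lyapunov inequality $\frac{dL}{d\tau}\bigl|_{\tau}\le -c_1 c_{\tilde N}L(x(\tau))$ legitimizes the substitution $\tau\mapsto L=L(x(\tau))$ on $[\tau_N,\infty)$; bounding $|1-p(x)^{2c}|\le C_{n,\alpha,N}(L)$ on $\tilde N$ (Appendix~\ref{appendix-estimates}) and using $1-\tfrac{2c-1}{2c}(1-p(x)^{2c})\le 1$ gives $t_{\max}-t_N\le\frac{1}{c_{\tilde N}c_1}\int_0^{\epsilon}\frac{C_{n,\alpha,N}(L)^k}{L}\,dL=\overline{C_{n,\alpha,k,N}}(\epsilon)$, which step~4 of the algorithm has certified to be finite. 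Hence $t_{\max}\in[t_N,\,t_N+\overline{C_{n,\alpha,k,N}}(\epsilon)]$, as claimed.

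\textbf{Main obstacle.} Almost everything above is a direct citation of Propositions~\ref{prop-Lyap}, \ref{prop-para-extension}, \ref{prop-blowup} and Theorem~\ref{thm-diverge}; the only genuinely delicate point, for which I would rely on \cite{TMSTMO} and the appendices rather than re-derive it, is the quantitative step: justifying the change of variables $\tau\mapsto L$ with the precise constants $c_1,c_{\tilde N}$ coming from the spectral data of $Y$ and $A$, and confirming that $\int_0^{\epsilon}C_{n,\alpha,N}(L)^k/L\,dL$ converges despite the $1/L$ singularity at $L=0$, which forces $C_{n,\alpha,N}(L)$ to vanish sufficiently fast there.
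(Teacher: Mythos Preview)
Your proposal is correct and follows precisely the route the paper intends: the paper itself offers no detailed proof, stating only that ``the proofs immediately follow from properties of compactifications and Lyapunov functions,'' and you have spelled out exactly those properties (Propositions~\ref{prop-Lyap}, \ref{prop-para-extension}, the continuation argument after Proposition~\ref{prop-Lyap}, and the estimate~(\ref{tmax-estimate})). Two minor remarks: the detour through Proposition~\ref{prop-blowup} and hyperbolicity is not strictly needed, since the finiteness of $t_{\max}$ already comes directly from the integral bound~(\ref{tmax-estimate}) (cf.\ the paper's own remark that hyperbolicity is not validated as such); and your derived interval $[t_N,\,t_N+\overline{C_{n,\alpha,k,N}}(\epsilon)]$ matches the paper's Section~\ref{section-explicit} --- the ``$\tau_N$'' in the theorem statement is a typo for ``$t_N$.''
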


\begin{thm}[Validation of blow-up solutions with directional compactifications]
\label{thm-blowup-validate-directional}
Let $y_0\in \mathbb{R}^n$.
Assume that Algorithm \ref{alg-validation2} returns {\tt succeded} with $(s_0, x_0) = T(y_0)$.
Then the solution $\{y(t) = T^{-1}(s(t), x(t))\}$ of (\ref{ODE-original}) with $y(0) = y_0$ such that 
\begin{equation*}
\{(s(\tau_d), x(\tau_d))\mid \tau_d \in [0, \infty), s(\tau_d)\to 0,\ x(\tau_d)\to x_\ast \text{ as }\tau_d\to \infty\}
\end{equation*}
via a time-variable desingularization (\ref{time-desing-directional}) and an asymptotically stable equilibrium $(0,x_\ast) \in \mathcal{E}$ is a blow-up solution with the blow-up time $t_{\max} \in [\tau_N, \tau_N + \overline{C_{n,k, N}}(\epsilon)]$.
\end{thm}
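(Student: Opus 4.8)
The plan is to follow the proof scheme of Theorem~\ref{thm-blowup-validate}, substituting the directional-compactification objects of Section~\ref{section-dir} and of the directional part of Section~\ref{section-explicit} for the quasi-parabolic ones. A successful run of Algorithm~\ref{alg-validation2} supplies: a validated zero $(0,x_\ast)\in\mathcal{E}$ of the desingularized field $g_d$ in \eqref{ODE-desing-directional}, which by Proposition~\ref{prop-ext-directional} is continuous on $\{s\ge 0\}\times\mathbb{R}^{n-1}$ and, since $f$ is smooth, is $C^1$ there; a compact, star-shaped Lyapunov domain $\tilde N\ni(0,x_\ast)$ on which \eqref{neg-def-1} is strictly negative definite for a positive definite symmetric $Y$, so that by Proposition~\ref{prop-Lyap} the functional $L(s,x)=((s,x)-(0,x_\ast))^TY((s,x)-(0,x_\ast))$ is a Lyapunov function with $(0,x_\ast)$ its unique, asymptotically stable equilibrium in $\tilde N$; a finite $\tau_N$ and a validated orbit segment $\{(s(\tau_d),x(\tau_d))\}_{\tau_d\in[0,\tau_N]}$ of $g_d$ with $(s_0,x_0)=T(y_0)$, $s_0>0$, and $(s(\tau_N),x(\tau_N))\in\Int N$, $N=\{L\le\epsilon\}\subset\tilde N$; and a finite value $\overline{C_{n,k,N}}(\epsilon)$.

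First I would globalize the orbit in $\tau_d$-time. Since $N$ is a compact sublevel set of $L$ on which $L$ strictly decreases along $g_d$ away from $(0,x_\ast)$, the orbit cannot leave $N$ for $\tau_d\ge\tau_N$ and stays in a compact set, hence extends to $[0,\infty)$; the Lyapunov/LaSalle argument recorded after Proposition~\ref{prop-Lyap} then forces $(s(\tau_d),x(\tau_d))\to(0,x_\ast)$ as $\tau_d\to\infty$. Because $\{s=0\}$ is invariant for $g_d$ and the $s$-equation in \eqref{ODE-desing-directional} reads $ds/d\tau_d=-s\,(B\hat f)_1$, one gets $s(\tau_d)=s_0\exp\!\big(-\!\int_0^{\tau_d}(B\hat f)_1(x(\sigma))\,d\sigma\big)>0$ for all finite $\tau_d$, with $s(\tau_d)\to 0$; in particular the orbit lies in $\{s>0\}\times\mathbb{R}^{n-1}$, so the directional chart $T^{-1}$ (whose only defect is being a local chart) is defined along it --- the sign of $y_i=\pm s^{-\alpha_i}$ never changes --- and $y(t):=T^{-1}(s(\tau_d),x(\tau_d))$ is a solution of \eqref{ODE-original} with $y(0)=y_0$. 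From $s(\tau_d)\to 0$ one reads off $p(y(t))\to\infty$, i.e.\ $y$ tends to infinity in the direction $x_\ast$, consistently with Theorem~\ref{thm-diverge}.

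Next I would pass back to $t$-time through \eqref{time-desing-directional}: $t=\int_0^{\tau_d}s(\sigma)^k\,d\sigma$, so the maximal existence time is $t_{\max}=\int_0^\infty s(\tau_d)^k\,d\tau_d=t_N+\int_{\tau_N}^\infty s(\tau_d)^k\,d\tau_d$ with $t_N=\int_0^{\tau_N}s(\tau_d)^k\,d\tau_d<\infty$ (a continuous integrand over a compact interval). The estimate chain \eqref{tmax-estimate-dir}, which uses only positive definiteness of $Y$, the negative-definiteness bound on $A(x)$ encoded in the constants $c_1,\tilde c_N$, and $N\subset\tilde N$, bounds the tail by $\overline{C_{n,k,N}}(\epsilon)<\infty$; hence $t_{\max}<\infty$, with $t_{\max}\in[\,t_N,\ t_N+\overline{C_{n,k,N}}(\epsilon)\,]$. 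Finally, since $p(y(t))\to\infty$ as $t\to t_{\max}-0$, the solution is unbounded near $t_{\max}$ and cannot be continued past it, so $t_{\max}$ is precisely the blow-up time and $y$ is a blow-up solution (its blow-up rate being that of Proposition~\ref{prop-blowup}).

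I expect the only genuinely delicate point to be the combination of the strict Lyapunov decrease with forward-completeness inside the compact sublevel set $N$, needed to conclude that entering $\Int N$ at the finite $\tau_d$-time $\tau_N$ yields a global $\tau_d$-orbit converging to $(0,x_\ast)$, together with the bookkeeping that $\tau_d\to\infty$ corresponds to $t\to t_{\max}$ rather than to an earlier finite $t$; once these are settled, the remaining ingredients --- finiteness of $t_N$, the estimate \eqref{tmax-estimate-dir}, invariance of $\{s=0\}$ and positivity of $s$ --- are routine.
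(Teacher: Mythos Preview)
Your proposal is correct and follows exactly the route the paper intends: the paper itself gives no detailed argument here, stating only that ``the proofs immediately follow from properties of compactifications and Lyapunov functions,'' and your write-up simply unpacks those properties (Proposition~\ref{prop-Lyap} for asymptotic convergence inside $N$, invariance of $\{s=0\}$ and positivity of $s$ from \eqref{ODE-desing-directional}, and the tail estimate \eqref{tmax-estimate-dir}) in the expected way. Note also that your conclusion $t_{\max}\in[t_N,\,t_N+\overline{C_{n,k,N}}(\epsilon)]$ is what the derivation in Section~\ref{section-explicit} actually yields; the ``$\tau_N$'' appearing in the theorem statement is a typo for $t_N$.
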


Finally we remark that our validations do {\em not} contain those of hyperbolicity for equilibria on $\mathcal{E}$.
Indeed, we only verify negative definiteness of {\em the symmetrization} of $Df(x)$ or an associated matrix in (\ref{neg-def-1}).
In particular, our validation does not directly provide rigorous blow-up rates of blow-up solutions mentioned in Proposition \ref{prop-blowup}.
Nevertheless, Proposition \ref{prop-blowup} provides a guideline for focusing on our targeting objects for validations, and Lyapunov function validations yield the asymptotic stability of equilibria and rigorous estimates of blow-up times, as mentioned.

%
%	new section
%
\section{Validation examples}
\label{section-numerical}
In this section, we demonstrate our procedure with several test problems.
All computations were carried out on 
macOS Sierra (ver.~10.12.5),
Intel(R) Xeon(R) CPU E5-1680 v2 @ 3.00 GHz using the kv library \cite{kv} ver. 0.4.41 to rigorously compute the trajectories of ODEs.

%
%	new subsection
%
\subsection{Example 1}
\label{section-ex1}
The first example is the following two-dimensional ODE:
\begin{equation}
\label{KK-simple}
\begin{cases}
u' = u^2 - v, & \\
v' = \frac{1}{3}u^3. &
\end{cases}
\end{equation}
This vector field is the special case of (\ref{KK}) discussed in the next example.
It immediately holds that the vector field (\ref{KK-simple}) is quasi-homogeneous of type $(1,2)$ and order $2$.
The numerical study of complete dynamics including infinity is shown in \cite{Mat}.
Our purpose here is to validate a blow-up solution observed there.
We introduce the quasi-parabolic compactification of type $(1,2)$ given by
\begin{equation*}
u = \frac{x_1}{1-p(x)^4},\quad v = \frac{x_2}{(1-p(x)^4)^2},\quad p(x)^4 = x_1^4 + x_2^2.
\end{equation*}
Then the corresponding desingularized vector field (\ref{ODE-desing-para}) is given by the following: 
\begin{equation}
\label{KK-simple-desing}
\begin{cases}
\dot x_1 = (x_1^2 - x_2) F(x) - x_1G(x) & \\
\dot x_2 = \frac{1}{3}x_1^3 F(x) - 2x_2 G(x) &
\end{cases},\quad \dot {} = \frac{d}{d\tau},
\end{equation}
where
\begin{equation*}
F(x) = \frac{1}{4}\left\{ 1 + 3(1-p(x)^4)\right\},\quad G(x) = x_1^3 (x_1^2 - x_2) + \frac{1}{6}x_1^3 x_2.
\end{equation*}
We are then ready to validate blow-up solutions, following Algorithm \ref{alg-validation1}.
In the similar way to \cite{Mat}, it turns out that the system (\ref{KK-simple-desing}) admits exactly four equilibria at infinity, one of which is a sink\footnote{
%	footnote
An equilibrium $p$ with ${\rm Spec}(Dg(p)) \subset \{\lambda\in \mathbb{C}\mid {\rm Re}\lambda < 0\}$.
%	footnote : end
}, the other one of which is a source\footnote{
%	footnote
An equilibrium $p$ with ${\rm Spec}(Dg(p)) \subset \{\lambda\in \mathbb{C}\mid {\rm Re}\lambda > 0\}$
%	footnote : end
}
and the rest of two are saddles\footnote{
%	footnote
Hyperbolic equilibria which are not neither sinks nor sources.
%	footnote : end
}.
Here we compute the sink on the horizon satisfying
\[
	x_\ast\in\left(
	\begin{array}{c}
	\left[0.98913699589497727,0.98913699589497773\right]\\
	\left[0.20675855700518036,0.2067585570051809\right]
	\end{array}\right),
\]
where $[\cdot,\cdot]$ denotes a real interval.
After that we validate a Lyapunov function as well as its Lyapunov domain including $N= \{x\in \overline{\mathcal{D}}\mid L(x) \leq \epsilon \}$ around the sink and  a solution trajectory $x(\tau)$ which enters $N$ in a finite time $\tau_N$.
The initial data are given by $(x_1(0),x_2(0))=(-0.1,0.0001)$ and $(-0.1,-0.1)$.
Table \ref{Tab:Ex1} shows validated results of blow-up solutions for (\ref{KK-simple}).
See also Figure \ref{fig-Ex1}.

\begin{table}[ht]
\caption{Validated results for (\ref{KK-simple}): numerical validations prove $x(\tau_N)\in {\rm int}\,N$ and (\ref{tmax-estimate}) yields the inclusion of the blow-up time $t_{\max}$. Subscript and superscript numbers in the table denote lower and upper bounds of the interval, respectively.}
\centering
\resizebox{\textwidth}{!}{
\begin{tabular}{ccccc}
\hline
Initial data & $\epsilon$ & $\tau_N$ & Inclusion of $t_{\max}$ & Exec. time\\
\hline
%\\[-2mm]
$(-0.1,0.0001)$ & $5.6700023252180213\times 10^{-5}$ & $343.57935744230372$ & $84.083_{706663650346}^{853417007874}$ & 1.42 s\\
$(-0.1,-0.1)$ & $5.6700023252180213\times 10^{-5}$ & $32.05598188250481$ & $6.201_{0761835235443}^{2442938861261}$ & 1.11 s\\
\hline 
\end{tabular}%
}
\label{Tab:Ex1}
\end{table}%

\begin{figure}[htbp]\em
\begin{minipage}{0.5\hsize}
\centering
\includegraphics[width=7.0cm]{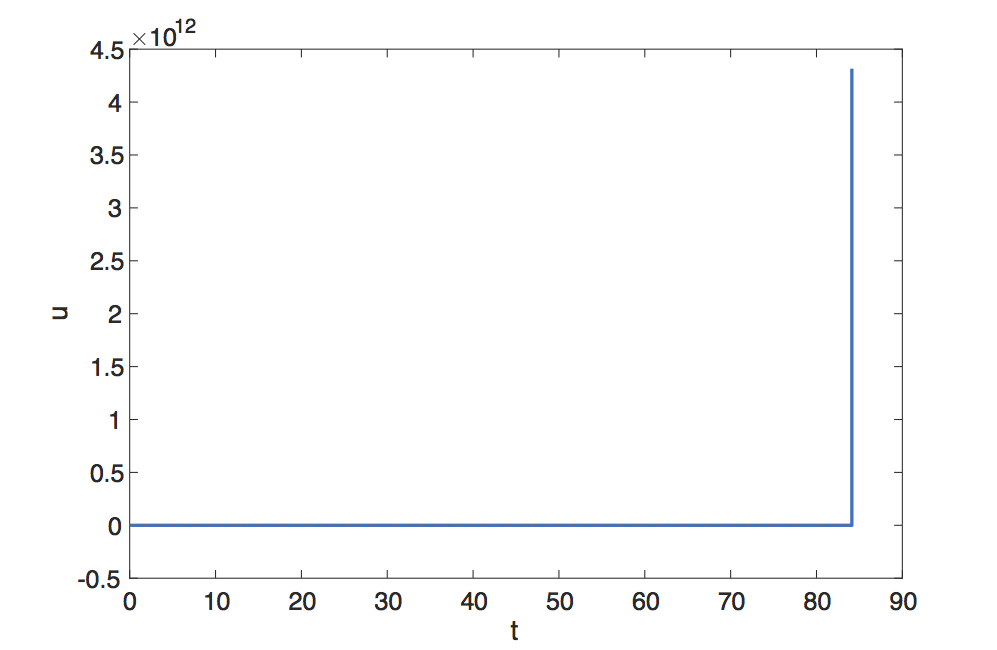}
(a)
\end{minipage}
\begin{minipage}{0.5\hsize}
\centering
\includegraphics[width=7.0cm]{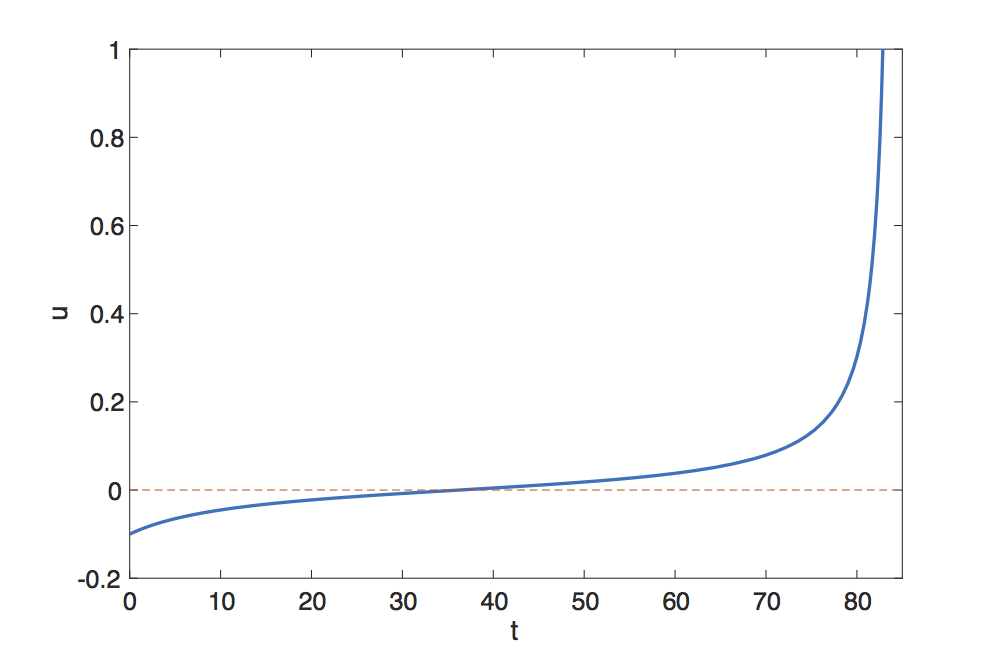}
(b)
\end{minipage}\\
\begin{minipage}{0.5\hsize}
\centering
\includegraphics[width=7.0cm]{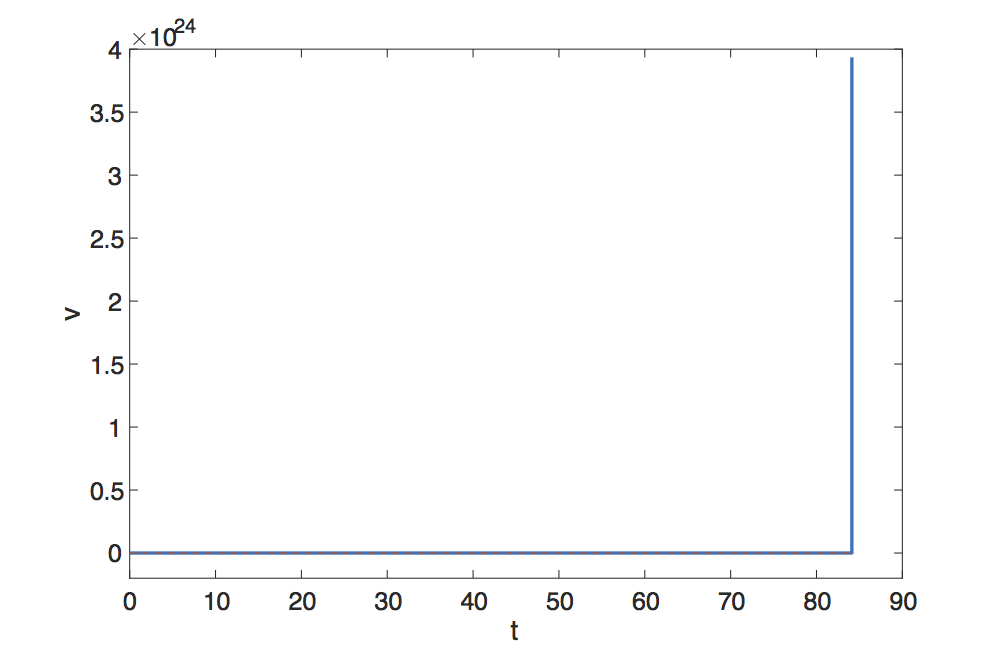}
(c)
\end{minipage}
\begin{minipage}{0.5\hsize}
\centering
\includegraphics[width=7.0cm]{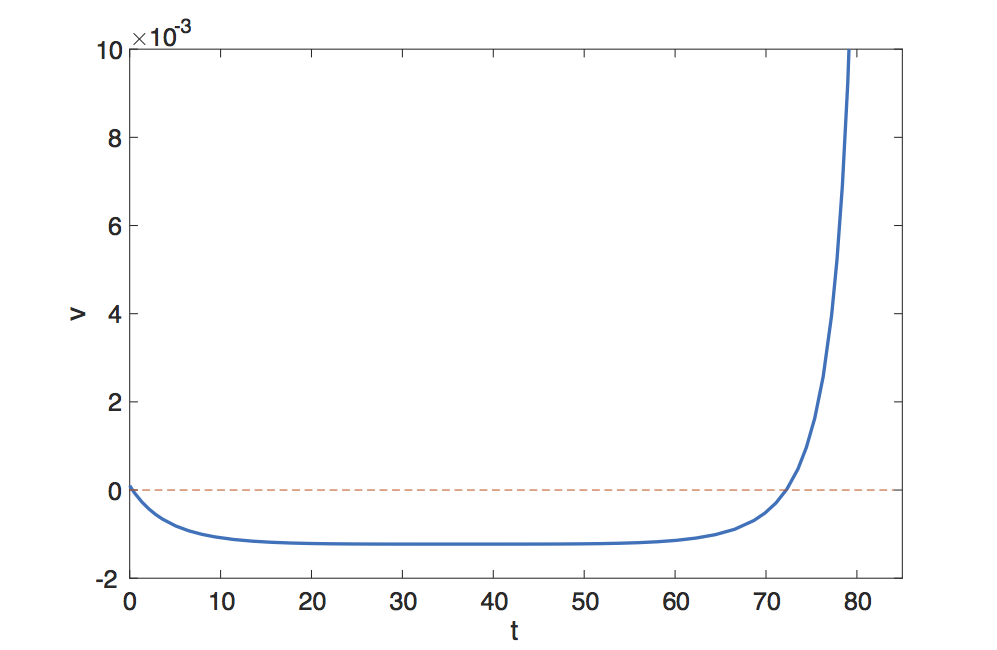}
(d)
\end{minipage}\\
\caption{A blow-up trajectory for (\ref{KK-simple})}
\label{fig-Ex1}
A blow-up trajectory with the initial data $(u(0),v(0)) = T_{para}^{-1}(x)$, $(x_1(0), x_2(0)) = (-0.1, 0.0001)$ are drawn.
Horizontal axis is the original time variable $t$, and vertical axis is the value of variables $u$ and $v$.
(a) : the $u$-component of the blow-up trajectory. (b) : the $u$-component of the blow-up trajectory in a vicinity of $u=0$. 
(c) : the $v$-component of the blow-up trajectory. (d) : the $v$-component of the blow-up trajectory in a vicinity of $v=0$.  
\end{figure}

Validated results in this example show the efficiency of quasi-parabolic compactifications compared with directional compactifications.
As indicated in \cite{Mat} and Figure \ref{fig-Ex1}, validated trajectories can change the sign.
Numerical computations as well as rigorous validations of trajectories with directional compactifications require the assumption that (at least) one of components {\em never change the sign} and that, even if it is the case, one knows such a component in advance.
If we deal with sign-changing trajectories, coordinate-change transformations have to be incorporated into the whole computations, which are not easy tasks for numerical integration of differential equations.
On the other hand, there is no such worry with quasi-parabolic compactifications because they provide {\em globally defined charts} on embedding manifolds.
Trajectories can be therefore validated without any assumptions about their signs.

%
%	new subsection
%
\subsection{Example 2}
\label{section-ex2}
The second example is the following two-dimensional ODE:
\begin{equation}
\label{KK}
\begin{cases}
u' = u^2 - v -su - c_1, & \\
v' = \frac{1}{3}u^3 - u - sv - c_2, &
\end{cases}
\end{equation}
where $(c_1,c_2) = (c_{1L}, c_{2L})$ or $(c_{1R}, c_{2R})$ are constants with
\begin{equation*}
\begin{cases}
c_{1L} = u_L^2 - v_L - su_L, & \\
c_{2L} = \frac{1}{3}u_L^3 - u_L - sv_L, & \\
\end{cases}
\quad
\begin{cases}
c_{1R} = u_R^2 - su_R - v_R, & \\
c_{2R} = \frac{1}{3}u_R^3 - u_R - sv_R. & \\
\end{cases}
\end{equation*}
The system (\ref{KK}) is well-known as the traveling wave equation derived from the {\em Keyfitz-Kranser model} \cite{KK1990}, which is the following initial value problem of the system of conversation laws:
\begin{equation}
\label{KK-PDE}
\begin{cases}
\displaystyle
{
\frac{\partial u}{\partial t} + \frac{\partial }{\partial x}(u^2 - v) = 0,
} & \\
\displaystyle
{
\frac{\partial v}{\partial t} + \frac{\partial }{\partial x}\left(\frac{1}{3}u^3 - u\right)=0,
} &
\end{cases}
\quad 
(u(x,0), v(x,0)) = 
\begin{cases}
(u_L, v_L) & x<0, \\
(u_R, v_R) & x>0.
\end{cases}
\end{equation}
In particular, our attentions are restricted to solutions of the form
\begin{equation}
\label{KK-ansatz}
u(x,t) = \bar u(\xi),\quad v(x,t) = \bar v(\xi),\quad \xi = x-st
\end{equation}
satisfying the following boundary condition:
\begin{equation}
\label{RH}
\lim_{\xi \to -\infty} \begin{pmatrix} \bar u(\xi) \\ \bar v(\xi) \end{pmatrix} = \begin{pmatrix} u_L \\ v_L \end{pmatrix},\quad \lim_{\xi \to +\infty} \begin{pmatrix} \bar u(\xi) \\ \bar v(\xi) \end{pmatrix} = \begin{pmatrix} u_R \\ v_R \end{pmatrix}.
\end{equation}

The governing system with the ansatz (\ref{KK-ansatz})-(\ref{RH}) derives the system (\ref{KK}).

\begin{rem}\rm
The system (\ref{KK-simple}) in the previous example actually extracts the quasi-homogeneous part of (\ref{KK}).
\par
Solutions (\ref{KK-ansatz}) of (\ref{KK}) satisfying (\ref{RH}) correspond to {\em shock waves with speed $s$} for the Riemann (initial value) problem (\ref{KK-PDE}) satisfying {\em viscosity profile criterion}.
The boundary condition (\ref{RH}) is known as the {\em Rankine-Hugoniot condition} which weak solutions of (\ref{KK-PDE}) admitting discontinuity must be satisfied.
\par
On the other hand, it is well-known that the Riemann problem (\ref{KK-PDE}) admits shock wave solutions with Dirac-delta singularities called {\em singular shock waves}.
Such solutions satisfy only a part of (\ref{RH}); called {\em Rankine-Hugoniot deficit}, and such structure corresponds to the presence of blow-up solutions for (\ref{KK}) with $(c_1,c_2) = (c_{1L}, c_{2L})$ or $(c_{1R}, c_{2R})$, which inspires our considerations herein.
See e.g., \cite{KK1990, SSS1993} for details about (\ref{KK-PDE}).
\end{rem}

It immediately holds that, as in the previous example, the vector field (\ref{KK}) turns out to be {\em asymptotically} quasi-homogeneous at infinity with type $\alpha = (1,2)$ and order $2$.
The desingularized vector field with quasi-parabolic compactifications is calculated as follows.
Introduce the quasi-parabolic compactification of type $(1,2)$ given by
\begin{equation*}
u = \frac{x_1}{1-p(x)^4},\quad v = \frac{x_2}{(1-p(x)^4)^2},\quad p(x)^4 = x_1^4 + x_2^2
\end{equation*}
and nonlinear functions $\tilde f_1(x), \tilde f_2(x)$ by
\begin{equation*}
\tilde f_1(x) := x_1^2 - x_2 - s\kappa^{-1}x_1 - \kappa^{-2}c_1,\quad \tilde f_2(x) := \frac{1}{3}x_1^3 - \kappa^{-2}x_1 - s\kappa^{-1}x_2 - c_2\kappa^{-3},
\end{equation*}
where $\kappa^{-1} = \kappa(x)^{-1} = (1-p(x)^4)^{1/4}$, the desingularized vector field associated with (\ref{KK}) becomes
\begin{equation*}
\label{KK-desing}
\begin{cases}
\dot x_1 = (x_1^2 - x_2 - s\kappa^{-1}x_1 - \kappa^{-2}c_1) F(x) - x_1 \tilde G(x) & \\
\dot x_2 = \left(\frac{1}{3}x_1^3 - \kappa^{-2}x_1 - s\kappa^{-1}x_2 - c_2\kappa^{-3}\right) F(x) - 2x_2 \tilde G(x) &
\end{cases},\quad \dot {} = \frac{d}{d\tau},
\end{equation*}
where
\begin{align*}
F(x) &= \frac{1}{4}\left\{ 1 + 3(1-p(x)^4)\right\},\\
\tilde G(x) &= x_1^3 (x_1^2 - x_2 - s\kappa^{-1}x_1 - \kappa^{-2}c_1) + \frac{1}{2} x_2\left(\frac{1}{3}x_1^3 - \kappa^{-2}x_1 - s\kappa^{-1}x_2 - c_2\kappa^{-3}\right).
\end{align*}
In the present validation, we applied $(c_1, c_2) = (c_{1L}, c_{2L})$ as well as the speed parameter $s$ are set as
\begin{align*}
&u_L = [1.46777062491],\quad v_L = [0.238709208571],\quad
s\in 0.44819467507505_{461}^{512},\\
&c_{1L}\in 1.25779442046144_{35}^{51},\quad c_{2L}\in -0.5207279753417_{6075}^{5985}\end{align*}
following the Rankine-Hugoniot relation (e.g., \cite{KK1990}),
where $[a]$ denotes the point interval consisting of a value $a$, and subscript and superscript numbers denote lower and upper bounds
of the interval, respectively.
We then compute an equilibrium on the horizon which satisfy
\[
	x_\ast\in\left(
	\begin{array}{c}
	\left[0.98913699589497727,0.98913699589497773\right]\\
	\left[0.20675855700518036,0.2067585570051809\right]
	\end{array}\right).
\]
Finally, validate blow-up solutions in the same way as the previous example.
Our validation result is listed in Table \ref{Tab:Ex2}.

\begin{table}[ht]
\caption{Validated results for (\ref{KK}): numerical validations prove $x(\tau_N)\in {\rm int}\,N$ and (\ref{tmax-estimate}) yields the inclusion of the blow-up time $t_{\max}$.}
\centering
\resizebox{\textwidth}{!}{
\begin{tabular}{ccccc}
\hline 
$(x_1(0),x_2(0))$ & $\epsilon$ & $\tau_N$ & $t_{\max}$ & Exec. time\\
\hline\\[-2mm]
$(-0.1,-0.8)$ & $0.00011049230488192128$ & $11.55312519434721$ & $0.944_{239514010626}^{69739415956034}$ & 0.86 s\\[1mm]
\hline 
\end{tabular}%
}
\label{Tab:Ex2}
\end{table}%

Validated results in this example show the efficiency of quasi-parabolic compactifications for asymptotically quasi-homogeneous vector fields at infinity.
As indicated in \cite{Mat}, quasi-Poincar\'{e} compactifications; namely, the case $\kappa(y) = (1+p(y)^{2c})^{1/2c}$, require calculations of {\em radicals} because of the presence of $\kappa^{-1}$ in desingularized vector fields.
Such terms cause the lack of smoothness of desingularized vector fields on the horizon, which indicates that the stability analysis of equilibria there in terms of Jacobian matrices makes no sense.
In particular, blow-up arguments cannot be developed within the present theory.
On the other hand, quasi-parabolic compactifications guarantees the smoothness of desingularized vector fields derived from original ones under their smoothness, {\em including the horizon}, by Proposition \ref{prop-para-extension}.
Blow-up arguments including numerical validations with quasi-parabolic compactifications can be therefore applied to vector fields which are not necessarily quasi-homogeneous.

%
%	new subsection
%
\subsection{Example 3}
\label{section-ex3}
The final example is a finite dimensional approximation of the following system of partial differential equations:
\begin{equation}
\label{KS-radial}
\left\{\begin{array}{ll}
u_t=r^{1-d}\left(r^{d-1}\left(u_r-uv_r\right)\right)_r,&r\in(0,L),~t>0,\\
v_t=r^{1-d}\left(r^{d-1}v_r\right)_r-v+u&r\in(0,L),~t>0,\\
u_r=v_r=0,&r=0,L,~t>0,\\
u(r,0)=u_0(r),~v(r,0)=v_0(r),&r\in(0,L)
\end{array}
\right.
\end{equation}
for some $L > 0$, which is the well-known {\em Keller-Segel model} on the $d$-dimensional ball with homogeneous Neumann boundary condition and radially symmetric anzats:
\begin{equation}
\label{KS-PDE}
\left\{\begin{array}{ll}
u_t=\Delta u-\nabla\cdot(u\nabla v),&x\in \Omega,~t>0,\\
v_t=\Delta v-v+u,&x\in\Omega,~t>0,\\
\frac{\partial u}{\partial\nu}=\frac{\partial v}{\partial\nu}=0,&x\in\partial\Omega,~t>0,\\
u(x,0)=u_0(x),~v(x,0)=v_0(x),&x\in\Omega,
\end{array}
\right.
\end{equation}
where $\Omega = \{x\in \mathbb{R}^d \mid |x|<L\}$.
%In particular, {\em radial symmetric solution} of (\ref{KS-PDE}) is focused in the present study, which is governed by the following system of partial differential equations:

Zhou and Saito \cite{ZG2017} has proposed a {\em finite volume discretization scheme} so that blow-up solutions for (\ref{KS-radial}) of the parabolic-elliptic (namely, $v_t=0$) type can be computed\footnote{
It is known that solutions of the Keller-Segel system (\ref{KS-PDE}) with positive initial data $u_0(x) > 0$, $v_0(x) > 0$ must be positive.
Moreover, the system (\ref{KS-PDE}) possesses an $L^1$-conservation law for $u$; namely $\int_\Omega u(x,t)dx = \int_\Omega u_0(x)dx$ holds for all $t\geq 0$. 
However, $L^1$-conservative discretization schemes for (\ref{KS-PDE}) are known to possess {\em no} numerical blow-up solutions typically.
See \cite{ZG2017} for details.
}.
We consider a parabolic-parabolic alternative of the discretization defined below:
\begin{align*}
\frac{du_1}{dt}&=\frac{r_1^{1-d}}{h}\left(r_{1+\frac{1}{2}}^{d-1}\frac{u_2-u_1}{h}\right)-\frac{r_1^{1-d}}{h}\left(r_{1+\frac{1}{2}}^{d-1}\frac{v_2-v_1}{h}u_1\right)\\
\frac{du_2}{dt}&= \frac{r_2^{1-d}}{h}\left(r_{2+\frac{1}{2}}^{d-1}\frac{u_3-u_2}{h}-r_{2-\frac{1}{2}}^{d-1}\frac{u_2-u_1}{h}\right)-\frac{r_2^{1-d}}{h}\left(r_{2+\frac{1}{2}}^{d-1}\frac{v_3-v_2}{h}u_2-r_{2-\frac{1}{2}}^{d-1}\frac{v_2-v_1}{h}u_1\right)\\
&\vdots\\
\frac{du_i}{dt}&= \frac{r_i^{1-d}}{h}\left(r_{i+\frac{1}{2}}^{d-1}\frac{u_{i+1}-u_{i}}{h}-r_{i-\frac{1}{2}}^{d-1}\frac{u_{i}-u_{i-1}}{h}\right)-\frac{r_i^{1-d}}{h}\left(r_{i+\frac{1}{2}}^{d-1}\frac{v_{i+1}-v_{i}}{h}u_i-r_{i-\frac{1}{2}}^{d-1}\frac{v_{i}-v_{i-1}}{h}u_{i-1}\right)\\
&\vdots\\
\frac{du_{N}}{dt}&=\frac{r_{N}^{1-d}}{h}\left(-r_{N-\frac{1}{2}}^{d-1}\frac{u_{N}-u_{N-1}}{h}\right)-\frac{r_{N}^{1-d}}{h}\left(-r_{N-\frac{1}{2}}^{d-1}\frac{v_{N}-v_{N-1}}{h}u_{N}\right),
\end{align*}
and
\begin{align*}
\frac{dv_1}{dt}&=\frac{r_1^{1-d}}{h}\left(r_{1+\frac{1}{2}}^{d-1}\frac{v_2-v_1}{h}\right)-v_1+u_1\\
\frac{dv_2}{dt}&=\frac{r_2^{1-d}}{h}\left(r_{2+\frac{1}{2}}^{d-1}\frac{v_3-v_2}{h}-r_{2-\frac{1}{2}}^{d-1}\frac{v_2-v_1}{h}\right)-v_2+u_2\\
&\vdots\\
\frac{dv_i}{dt}&=\frac{r_i^{1-d}}{h}\left(r_{i+\frac{1}{2}}^{d-1}\frac{v_{i+1}-v_i}{h}-r_{i-\frac{1}{2}}^{d-1}\frac{v_i-v_{i-1}}{h}\right)-v_i+u_i\\
&\vdots\\
\frac{dv_{N}}{dt}&=\frac{r_{N}^{1-d}}{h}\left(-r_{N-\frac{1}{2}}^{d-1}\frac{v_{N}-v_{N-1}}{h}\right)-v_{N}+u_{N}.
\end{align*}
We name the system {\bf (FvKS)}.
The precise setting (FvKS) is as follows:
letting $N\in\mathbb{N}$ and $h=L/N$, the mesh of the interval $(0,L)\subset \mathbb{R}$ is defined by
\[
	0=r_{\frac{1}{2}}<r_{1+\frac{1}{2}}<\dots<r_{N-1+\frac{1}{2}}<r_{N+\frac{1}{2}}=L,
\]
where $r_{i+\frac{1}{2}}=ih$ ($i=0,1,\dots,N$).
In this example, we set $L=1$.
Here, $(r_{i+\frac{1}{2}},r_{i+1+\frac{1}{2}})$ ($i=0,1,\dots,N-1$) is called the control volume with its control point $r_{i+1}=(i+\frac{1}{2})h$.
The semi-discretization of the space variable yields the approximation satisfying $u_i(t)\simeq u(r_i,t)$ and $v_i(t)\simeq v(r_i,t)$ ($i=1,2,\dots,N$, $t>0$).

\begin{rem}\rm
We briefly gather several facts about blow-up behavior in the Keller-Segel systems of the parabolic-parabolic type (\ref{KS-PDE}).
In \cite{HV1997}, radially symmetric blow-up solutions for (\ref{KS-radial}) with $d\geq 2$ are constructed constitutively.
In \cite{OY2001}, the Keller-Segel system with $d=1$ is proved to admit {\em no} blow-up solutions.
In \cite{W2013}, criteria for blow-ups of radial-symmetric solutions for (\ref{KS-radial}) with $d\geq 3$ are provided.
In \cite{M2016}, radially symmetric blow-up solutions for (\ref{KS-radial}) with $d=2$ is proved to be of so-called {\em type II}; namely, asymptotics near blow-up is not determined only by nonlinearity of vector fields.
See references therein and others for more details.
\end{rem}

\par
\bigskip
First we observe that (FvKS) is asymptotically quasi-homogeneous in the following sense.
\begin{lem}
\label{lem-scaling-KS}
The system (FvKS) is an asymptotically quasi-homogeneous vector field at infinity of the following type and order $2$:
\begin{equation*}
\alpha = (\underbrace{2\dots,2}_{N}, \underbrace{1\dots,1}_{N}).
\end{equation*}
In other words, (FvKS) is asymptotically quasi-homogeneous under the scaling $u_i\mapsto s^2u_i$ and $v_i\mapsto sv_i$ for $i=1,\cdots, N$.
\end{lem}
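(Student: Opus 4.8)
The plan is to produce the quasi-homogeneous vector field $f_{\alpha,k}$ of type $\alpha$ and order $2$ demanded by the definition of asymptotic quasi-homogeneity at infinity, and then to check that the remaining terms of (FvKS) are of strictly lower order under the anisotropic scaling $u_i\mapsto s^2u_i$, $v_i\mapsto sv_i$, $i=1,\dots,N$.

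First I would note that all the geometric weights $r_i$, $r_{i\pm\frac12}$ and $h$ occurring in (FvKS) are fixed positive constants, hence invariant under the scaling, so only the powers of the dynamical variables $u_i,v_i$ are relevant to the homogeneity count. I then split each right-hand side into monomials. The $u_i$-equation consists of (a) discrete-diffusion terms, each linear in the $u_j$'s, and (b) chemotaxis terms, each a product of a difference quotient of the $v_j$'s with a single $u_j$ (with the obvious modifications in the boundary rows $i=1$ and $i=N$). The $v_i$-equation consists of (c) discrete-diffusion terms linear in the $v_j$'s, (d) the reaction term $-v_i$, and (e) the coupling term $+u_i$.

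Next I would set $f_{\alpha,k}$ componentwise: in each $u_i$-component take $(f_{\alpha,k})_{u_i}$ to be precisely the chemotaxis part (b), and in each $v_i$-component take $(f_{\alpha,k})_{v_i}:=u_i$. Under $u_j\mapsto s^2u_j$, $v_j\mapsto sv_j$ a difference quotient in the $v$'s acquires a factor $s$ and a factor $u_j$ acquires $s^2$, so every monomial in (b) is multiplied by $s^3$ and the term $u_i$ in (e) is multiplied by $s^2$. Since the $u_i$-component has weight $\alpha=2$, the $v_i$-component has weight $\alpha=1$, and $k=1$, the exponents $3=k+2$ and $2=k+1$ are exactly the orders required; hence each component of $f_{\alpha,k}$ is a homogeneous function of type $\alpha$ of the correct order, and $f_{\alpha,k}$ is a quasi-homogeneous vector field of type $\alpha$ and order $2$. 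For the leftover terms, (a) scales by $s^2$, while (c) and (d) scale by $s$; these are one power of $s$ below $s^3$ (resp.\ $s^2$). Therefore, for every component index $j$,
\[
s^{-(k+\alpha_j)}\Bigl\{ f_j(s^{\alpha_1}x_1,\dots,s^{\alpha_n}x_n)-s^{k+\alpha_j}(f_{\alpha,k})_j(x)\Bigr\}
\]
equals $s^{-1}$ times a polynomial in $x$, which tends to $0$ as $s\to+\infty$ uniformly on the compact sphere $S^{n-1}$. This establishes the defining limit with the asserted type and order.

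The entire argument is bookkeeping; the one delicate point — and hence the main obstacle — is to keep the two scaling weights ($2$ on the $u$-block, $1$ on the $v$-block) mutually consistent across the coupled chemotaxis terms, i.e.\ to confirm that every such monomial, whether formed from $u_i$ or from $u_{i-1}$ together with a forward or backward difference quotient, and including the boundary rows $i=1,N$, carries exactly the weight $s^3$, and that no surviving nonlinear term in either block reaches the leading order. Once the term-by-term degree table is written down this is immediate, so I would present the verification compactly rather than displaying all $2N$ equations.
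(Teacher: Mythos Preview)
Your argument is correct. The paper itself states Lemma~\ref{lem-scaling-KS} without proof, treating the scaling verification as routine; your term-by-term bookkeeping (chemotaxis terms $\sim s^3$ and the coupling $+u_i\sim s^2$ as the leading parts, with the diffusion and reaction terms one order below) is exactly the check that justifies the assertion.
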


Following Lemma \ref{lem-scaling-KS}, we consider two types of quasi-homogeneous compactifications.
One is the directional compactification of type $\alpha$:
\begin{equation}
\label{dir-KS}
u_1 = \frac{1}{s^2},\quad u_i = \frac{x_i}{s^2}\ (i=2,\cdots, N),\quad v_j = \frac{y_j}{s}\ (j=1,\cdots, N),
\end{equation}
and the other is the quasi-parabolic compactification of type $\alpha$:
\begin{equation}
\label{para-KS}
y_j = \frac{x_j}{(1-p(x)^{4})^{\alpha_j}}\ (j=1,\cdots, 2N),\quad p(x)^4 = \sum_{j=1}^{N}\bar u_j^2 + \bar v_j^4,\quad \kappa^{-1} = 1-p(x)^4,
\end{equation}
where $x = (x_1,\cdots, x_{2N}) \equiv (\bar u_1,\cdots, \bar u_N, \bar v_1, \cdots, \bar v_N)$.

%
%	new subsubsection
%
\subsubsection{Directional compactification}
\label{section-Ex3-dir}

Direct computations yield the following transformation of vector fields:
\begin{align*}
u_1' &= -2s^{-3} s'\\
	&= \frac{r_1^{1-d}}{h}\left(r_{1+\frac{1}{2}}^{d-1}s^{-2}\frac{x_2-1}{h}\right)-\frac{r_1^{1-d}}{h}\left(r_{1+\frac{1}{2}}^{d-1}s^{-3}\frac{y_2-y_1}{h}\right),
\end{align*}
namely,
\begin{equation*}
s' = -\frac{r_1^{1-d}}{2h^2}r_{1+\frac{1}{2}}^{d-1} \left\{s(x_2-1) - (y_2-y_1)\right\}.
\end{equation*}
Similarly,
\begin{align*}
u_2' &= -2s^{-3}x_2 s' + s^{-2} x_2'\\
	&= \frac{r_2^{1-d}}{h} s^{-2} \left(r_{2+\frac{1}{2}}^{d-1}\frac{x_3-x_2}{h}-r_{2-\frac{1}{2}}^{d-1}\frac{x_2-1}{h}\right) - \frac{r_2^{1-d}}{h}s^{-3} \left(r_{2+\frac{1}{2}}^{d-1} \frac{y_3-y_2}{h}x_2 - r_{2-\frac{1}{2}}^{d-1}\frac{y_2-y_1}{h}\right),
\end{align*}
to obtain
\begin{align*}
x_2' &= 2s^{-1}x_2 s' + s^2 u_2'\\
	&= 2s^{-1}x_2 \left[ -\frac{r_1^{1-d}}{2h^2}r_{1+\frac{1}{2}}^{d-1} \left\{s(x_2-1) - (y_2-y_1)\right\}\right]\\
	&\quad + \frac{r_2^{1-d}}{h} \left(r_{2+\frac{1}{2}}^{d-1}\frac{x_3-x_2}{h}-r_{2-\frac{1}{2}}^{d-1}\frac{x_2-1}{h}\right) - \frac{r_2^{1-d}}{h}s^{-1} \left(r_{2+\frac{1}{2}}^{d-1} \frac{y_3-y_2}{h}x_2 - r_{2-\frac{1}{2}}^{d-1}\frac{y_2-y_1}{h}\right)\\
	&= -s^{-1}x_2 \left[ \frac{r_1^{1-d}}{h^2}r_{1+\frac{1}{2}}^{d-1} \left\{s(x_2-1) - (y_2-y_1)\right\}\right]\\
	&\quad + \frac{r_2^{1-d}}{h^2} \left[ r_{2+\frac{1}{2}}^{d-1} \left\{ (x_3-x_2) -s^{-1}(y_3-y_2) x_2 \right\} - r_{2-\frac{1}{2}}^{d-1} \left\{ (x_2-1) -s^{-1}(y_2-y_1)\right\} \right].
\end{align*}
For $u_i$ with $i= 3,\cdots, N=1$, 
\begin{align*}
u_i' &= -2s^{-3}x_i s' + s^{-2} x_i'\\
	&= \frac{r_i^{1-d}}{h}  s^{-2}  \left(r_{i+\frac{1}{2}}^{d-1}\frac{x_{i+1}-x_{i}}{h}-r_{i-\frac{1}{2}}^{d-1} \frac{x_{i} - x_{i-1}}{h}\right)-\frac{r_i^{1-d}}{h}  s^{-3} \left(r_{i+\frac{1}{2}}^{d-1}\frac{y_{i+1}-y_{i}}{h}x_i - r_{i-\frac{1}{2}}^{d-1}\frac{y_{i}-y_{i-1}}{h}x_{i-1}\right),
\end{align*}
to obtain
\begin{align*}
x_i' &= -2s^{-1}x_i s' + s^2 u_i'\\
	&= -s^{-1}x_i \left[ \frac{r_1^{1-d}}{h^2}r_{1+\frac{1}{2}}^{d-1} \left\{s(x_2-1) - (y_2-y_1)\right\}\right]\\
	&\quad + \frac{r_i^{1-d}}{h^2} \left[ r_{i+\frac{1}{2}}^{d-1} \left\{ (x_{i+1}-x_{i}) - s^{-1}  (y_{i+1}-y_{i}) x_i \right\} - r_{i-\frac{1}{2}}^{d-1} \left\{ (x_{i} - x_{i-1}) - s^{-1}(y_{i}-y_{i-1})x_{i-1}\right\} \right].
\end{align*}
Finally, 
\begin{align*}
u_N' &= -2s^{-3}x_N s' + s^{-2} x_N'\\
	&=\frac{r_{N}^{1-d}}{h} s^{-2}\left(-r_{N-\frac{1}{2}}^{d-1}\frac{x_{N}-x_{N-1}}{h}\right)-\frac{r_{N}^{1-d}}{h} s^{-2} \left(-r_{N-\frac{1}{2}}^{d-1}\frac{y_{N}-y_{N-1}}{h}x_{N}\right)\end{align*}
to obtain
\begin{align*}
x_N' &= -2s^{-1}x_N s' + s^2 u_N'\\
	&= -s^{-1}x_N \left[ \frac{r_1^{1-d}}{h^2}r_{1+\frac{1}{2}}^{d-1} \left\{s(x_2-1) - (y_2-y_1)\right\}\right]- \frac{r_{N}^{1-d}}{h^2} r_{N-\frac{1}{2}}^{d-1} \left\{ (x_{N}-x_{N-1}) - s^{-1}(y_{N}-y_{N-1})x_{N}\right\}.
\end{align*}
Next compute $y_i'$.
\begin{align*}
v_1'&= -s^{-2} y_1 s' + s^{-1} y_1'
	= \frac{r_1^{1-d}}{h}s^{-1}\left(r_{1+\frac{1}{2}}^{d-1}\frac{y_2-y_1}{h}\right)- s^{-1}y_1+ s^{-2}
\end{align*}
to obtain
\begin{align*}
y_1'&= s^{-1} y_1 s' + s v_1'\\
	&= -s^{-1}y_1 \left[ \frac{r_1^{1-d}}{2h^2}r_{1+\frac{1}{2}}^{d-1} \left\{s(x_2-1) - (y_2-y_1)\right\}\right]
	+ \frac{r_1^{1-d}}{h^2}r_{1+\frac{1}{2}}^{d-1} (y_2-y_1)- y_1+ s^{-1}.
\end{align*}
Similarly,
\begin{align*}
v_i'&= -s^{-2} y_i s' + s^{-1} y_i'\\
	&= \frac{r_i^{1-d}}{h^2}s^{-1}\left(r_{i+\frac{1}{2}}^{d-1}(y_{i+1}-y_i) -r_{i-\frac{1}{2}}^{d-1} (y_i-y_{i-1}) \right) - s^{-1}y_i + s^{-2}x_i
\end{align*}
to obtain
\begin{align*}
y_i'&= s^{-1} y_i s' + s v_i'\\
	&= -s^{-1}y_i \left[ \frac{r_1^{1-d}}{2h^2}r_{1+\frac{1}{2}}^{d-1} \left\{s(x_2-1) - (y_2-y_1)\right\}\right]\\
	&+ \frac{r_i^{1-d}}{h^2} \left(r_{i+\frac{1}{2}}^{d-1}(y_{i+1}-y_i) -r_{i-\frac{1}{2}}^{d-1} (y_i-y_{i-1}) \right) - y_i + s^{-1}x_i,\quad i=2,\cdots, N-1,
\end{align*}
and
\begin{align*}
v_N'&= -s^{-2} y_N s' + s^{-1} y_N'\\
	&= \frac{r_{N}^{1-d}}{h^2}s^{-1}\left(-r_{N-\frac{1}{2}}^{d-1}(y_{N}-y_{N-1}) \right)- s^{-1}y_{N}+ s^{-2}x_{N}
\end{align*}
to obtain
\begin{align*}
y_N'&= s^{-1} y_N s' + s v_N'\\
	&= -s^{-1}y_N \left[ \frac{r_1^{1-d}}{2h^2}r_{1+\frac{1}{2}}^{d-1} \left\{s(x_2-1) - (y_2-y_1)\right\}\right]
	+ \frac{r_{N}^{1-d}}{h^2} r_{N-\frac{1}{2}}^{d-1}\left(-(y_{N}-y_{N-1}) \right)- y_{N}+ s^{-1}x_{N}.
\end{align*}
Introducing the time-variable desingularization
\begin{equation*}
\frac{d\tau}{dt} = s^{-1},
\end{equation*}
we have the following result.

\begin{lem}
\label{lem-dir-KS}
The desingularized vector field of (FvKS) with respect to the directional compactification (\ref{dir-KS}) is the following system:
%	for lambda, x_i
\begin{align*}
\dot s &= - s\frac{r_1^{1-d}}{2h^2}r_{1+\frac{1}{2}}^{d-1} \left\{s(x_2-1) - (y_2-y_1)\right\},\\
\dot x_2 &= -x_2 \left[ \frac{r_1^{1-d}}{h^2}r_{1+\frac{1}{2}}^{d-1} \left\{s(x_2-1) - (y_2-y_1)\right\}\right]\\
	&\quad + \frac{r_2^{1-d}}{h^2} \left[ r_{2+\frac{1}{2}}^{d-1} \left\{ s (x_3-x_2) - (y_3-y_2) x_2 \right\} - r_{2-\frac{1}{2}}^{d-1} \left\{ s(x_2-1) -(y_2-y_1)\right\} \right],\\
%
%	&\vdots\\
%
\dot x_i &= - x_i \left[ \frac{r_1^{1-d}}{h^2}r_{1+\frac{1}{2}}^{d-1} \left\{s(x_2-1) - (y_2-y_1)\right\}\right]\\
	&\quad + \frac{r_i^{1-d}}{h^2} \left[ r_{i+\frac{1}{2}}^{d-1} \left\{ s(x_{i+1}-x_{i}) - (y_{i+1}-y_{i}) x_i \right\} - r_{i-\frac{1}{2}}^{d-1} \left\{ s(x_{i} - x_{i-1}) - (y_{i}-y_{i-1})x_{i-1}\right\} \right],\\
	&\quad\quad  (i=3,\cdots, N-1)\\
%
%	&\vdots\\
%
\dot x_N &= - x_N \left[ \frac{r_1^{1-d}}{h^2}r_{1+\frac{1}{2}}^{d-1} \left\{s(x_2-1) - (y_2-y_1)\right\}\right]- \frac{r_{N}^{1-d}}{h^2} r_{N-\frac{1}{2}}^{d-1} \left\{ s (x_{N}-x_{N-1}) - (y_{N}-y_{N-1})x_{N-1}\right],
\end{align*}
%	for y_i
\begin{align*}
\dot y_1 &= - y_1 \left[ \frac{r_1^{1-d}}{2h^2}r_{1+\frac{1}{2}}^{d-1} \left\{s(x_2-1) - (y_2-y_1)\right\}\right]
	+ s \frac{r_1^{1-d}}{h^2}r_{1+\frac{1}{2}}^{d-1} (y_2-y_1) - s y_1+ 1,\\
%
%	&\vdots\\
%
\dot y_i &= -y_i \left[ \frac{r_1^{1-d}}{2h^2}r_{1+\frac{1}{2}}^{d-1} \left\{s(x_2-1) - (y_2-y_1)\right\}\right]\\
	&+ s \frac{r_i^{1-d}}{h^2} \left(r_{i+\frac{1}{2}}^{d-1}(y_{i+1}-y_i) -r_{i-\frac{1}{2}}^{d-1} (y_i-y_{i-1}) \right) - s y_i + x_i,\quad (i=2,\cdots, N-1)\\
%
%	&\vdots\\
%
\dot y_N &= - y_N \left[ \frac{r_1^{1-d}}{2h^2}r_{1+\frac{1}{2}}^{d-1} \left\{s(x_2-1) - (y_2-y_1)\right\}\right]
	+ s \frac{r_{N}^{1-d}}{h^2} r_{N-\frac{1}{2}}^{d-1}\left(-(y_{N}-y_{N-1}) \right)- s y_{N}+ x_{N}.
\end{align*}
\end{lem}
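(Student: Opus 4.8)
The statement follows from a direct computation organized around the local chart structure of the directional compactification (\ref{dir-KS}). The plan is as follows. Since (\ref{dir-KS}) is the instance of (\ref{dir-cpt}) with $i=1$ and type $\alpha=(2,\dots,2,1,\dots,1)$, and since it requires $u_1>0$, the $u_1$-equation of (FvKS) is used up to define the chart variable $s$: differentiating $u_1=s^{-2}$ gives $u_1'=-2s^{-3}s'$, and equating this with the (FvKS) right-hand side for $u_1'$ and solving for $s'$ yields
\[
s' = -\frac{r_1^{1-d}}{2h^2}\,r_{1+\frac12}^{d-1}\bigl\{s(x_2-1)-(y_2-y_1)\bigr\},
\]
which is exactly the $t$-timescale expression displayed above the lemma.

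Next, for the remaining variables I would differentiate the chart relations $u_i=x_i s^{-2}$ ($i=2,\dots,N$) and $v_j=y_j s^{-1}$ ($j=1,\dots,N$), solve each for $x_i'$, respectively $y_j'$, using the $s'$ just obtained, and substitute the corresponding (FvKS) right-hand sides; wherever the inner node $u_1$ appears in a flux it is replaced by $s^{-2}$, i.e.\ the chart fixes $x_1\equiv 1$. This reproduces the $t$-timescale expressions collected in the paragraphs preceding the lemma. The only care needed here is bookkeeping: the three boundary cases ($i=2$, adjacent to the inner node; generic $2<i<N$; and $i=N$, adjacent to the Neumann boundary) must be kept separate because the flux coefficients $r_{i\pm1/2}^{d-1}$ differ there. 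Equivalently, one could feed the (FvKS) right-hand sides directly into the matrix identity (\ref{vec-directional-inv})--(\ref{vec-directional-inv-2}) of Section \ref{section-dir}.

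Finally, introduce the time-variable desingularization $d\tau=s^{-1}dt$, which replaces $\frac{d}{dt}$ by $s\frac{d}{dt}$, i.e.\ multiplies each right-hand side above by $s$. Because (FvKS) is asymptotically quasi-homogeneous of order $2$ (Lemma \ref{lem-scaling-KS}), Lemma \ref{lem-order-directional} applied with $k=1$ guarantees that every negative power $s^{-1}$ occurring in the $t$-timescale expressions is cancelled by this multiplication, so the resulting vector field is polynomial in $(s,x_2,\dots,x_N,y_1,\dots,y_N)$; collecting terms gives precisely the system in the statement. I expect the only real obstacle to be the index bookkeeping --- tracking the many subscripts and the half-integer flux coefficients through the boundary cases without sign or indexing slips --- rather than anything conceptual; the a priori cancellation of negative powers of $s$ supplied by the general theory of Section \ref{section-dir} serves as a convenient consistency check on the algebra.
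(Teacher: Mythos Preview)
Your proposal is correct and follows essentially the same route as the paper: differentiate the chart relations $u_1=s^{-2}$, $u_i=x_is^{-2}$, $v_j=y_js^{-1}$, substitute the (FvKS) right-hand sides to obtain the $t$-timescale expressions for $s',x_i',y_j'$ (handling the boundary indices separately), and then multiply through by $s$ via the desingularization $d\tau=s^{-1}dt$. The paper carries out exactly these calculations in the paragraphs preceding the lemma; your additional remark that Lemma~\ref{lem-order-directional} guarantees a priori the cancellation of negative powers of $s$ is a nice consistency check not made explicit there.
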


Our concerning blow-up solution is a trajectory of the desingularized vector field asymptotic to an equilibrium on the horizon $\{s=0\}$.
The initial data is given by
\begin{equation}
\label{initial-ex3}
u_i(0)=100(1+\cos(\pi r_i)),~v_i(0)=0~(i=1,2,\dots,N).
\end{equation}
Then, we derive
\[
	s(0)=\frac{1}{\sqrt{u_1(0)}},~x_i(0)=\frac{u_i(0)}{u_1(0)}~(i=2,3,\dots,N),~y_j(0)=\frac{v_j(0)}{\sqrt{u_1(0)}}~(j=1,2,\dots,N).
\]	

Following Algorithm \ref{alg-validation2}, we validate global trajectories for the vector field in Lemma \ref{lem-dir-KS} asymptotic to $\mathcal{E} = \{s=0\}$ with various $(d,N)$.
Validated equilibria are near
\begin{align*}
&s = 0,\quad x_1 = -0.036653902557231,\quad x_2 = -8.275562067652\times 10^{-5},\quad x_j = 0\ (j\geq 3),\\
&y_1 = 0.04910809766161,\quad y_2 = 0.001800003426459655,\quad y_j = 0\ (j\geq 3),\quad \text{etc.}
\end{align*}
Validated results are collected in Table \ref{Tab:Ex3_1}, which correspond to rigorous enclosures of a trajectory illustrated in Figure \ref{fig-Ex3}.

\begin{table}[ht]
\caption{Validated results for (FvKS) using the directional compactification: numerical validations prove $x(\tau_N)\in {\rm int}\,N$ and (\ref{tmax-estimate-dir}) yields the inclusion of the blow-up time $t_{\max}$. Subscript and superscript numbers in the table denote lower and upper bounds of the interval, respectively.}
\centering
\resizebox{\textwidth}{!}{
\begin{tabular}{ccccc}
\hline 
$(d,N)$ & $\epsilon$ & $\tau_N$ & $t_{\max}$ & Exec. time\\
\hline\\[-2mm]
$(4,4)$ & $7.7787964060071189\times 10^{-7}$ & $2.2660030304331925$ & $
0.04163_{4995298971515}^{5093439395401}$ & 4.58 s\\[1mm]
$(4,5)$ & $3.9917525258063959\times 10^{-7}$ & $2.0798564005033283$ & $0.0414601_{47411111418}^{84989963539}$ & 11.33 s\\[1mm]
$(4,6)$ & $2.2532402360440276\times 10^{-7}$ & $1.9152197502002851$ & $0.0406815_{24736414453}^{44554173229}$ & 26.17 s\\[1mm]
$(4,7)$ & $4.5949729863572216\times 10^{-10}$ & $2.1715368022411817$ & $0.039930492_{160482736}^{204807836}$ & 57.81 s\\[1mm]
$(4,8)$ & $1.0\times 10^{-10}$ & $2.0850477418274505$ & $0.0394018633_{68052715}^{77681091}$ & 1 m 50.67 s\\[1mm]
$(4,9)$ & $1.1000000000000001\times 10^{-10}$ & $1.9431916522110496$ & $0.0390410168_{12640577}^{21151034}$ & 2 m 56.85 s\\[1mm]
$(4,10)$ & $1.4641000000000004\times 10^{-10}$ & $1.8118057787224227$ & $0.0387870664_{76778042}^{87772283}$ & 4 m 19.01 s\\[1mm]
$(4,11)$ & $1.1000000000000001\times 10^{-10}$ & $1.7422008610746525$ & $0.0386048800_{46284047}^{52778686}$ & 6 m 21.54 s\\[1mm]
$(4,12)$ & Failed & - & - & -\\[1mm]
$(3,4)$ & $1.2527829399838528\times 10^{-6}$ & $2.8164262707985448$ & $0.044016_{34379731982}^{564692126309}$ & 3.88 s\\[1mm]
$(3,5)$ & $5.8443248730331463\times 10^{-7}$ & $2.4889023211163482$ & $0.042811_{321959989476}^{448911760066}$ & 9.41 s\\[1mm]
$(3,6)$ & $3.6288659325512687\times 10^{-7}$ & $2.2711479488006821$ & $0.042214_{039058911502}^{116450095999}$ & 20.76 s\\[1mm]
$(3,7)$ & $2.2532402360440276\times 10^{-7}$ & $2.1201089281490533$ & $0.0417731_{52715466201}^{99179381021}$& 39.47 s\\[1mm]
$(3,8)$ & $3.684227838451178\times 10^{-8}$ & $2.1300729396508551$ & $0.0414018_{28395814903}^{34983204045}$ & 1 m 22.44 s\\[1mm]
$(3,9)$ & $1.3310000000000004\times 10^{-10}$ & $2.4057492533283767$ & $0.0411075733_{70292957}^{9831362}$ & 2 m 27.18 s\\[1mm]
$(3,10)$ & $5.5599173134922393\times 10^{-10}$ & $2.1386518033144264$ & $0.040888914_{45496233}^{652944385}$ & 3 m 16.68 s\\[1mm]
$(3,11)$ & $1.4641000000000004\times 10^{-10}$ & $2.102567451906797$ & $0.040731730_{763463577}^{868847683}$ & 5 m 9.42 s\\[1mm]
$(3,12)$ & Failed & - & - & -\\[1mm]
$(2,4)$ & $1.8341995024303595\times 10^{-7}$ & $3.5400444623271277$ & $0.05263_{7126736797233}^{9096803538601}$ & 3.65 s\\[1mm]
$(2,5)$ & Failed & - & - & -\\[1mm]
\hline 
\end{tabular}%
}
\label{Tab:Ex3_1}
\end{table}

\begin{figure}[htbp]\em
\begin{minipage}{0.5\hsize}
\centering
\includegraphics[width=7.0cm]{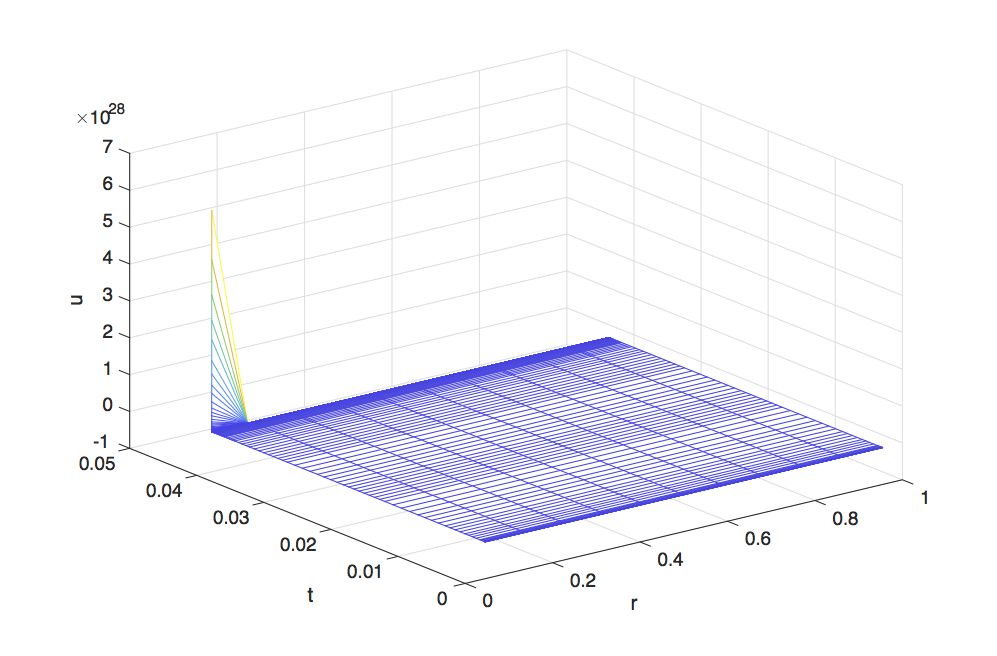}
(a)
\end{minipage}
\begin{minipage}{0.5\hsize}
\centering
\includegraphics[width=7.0cm]{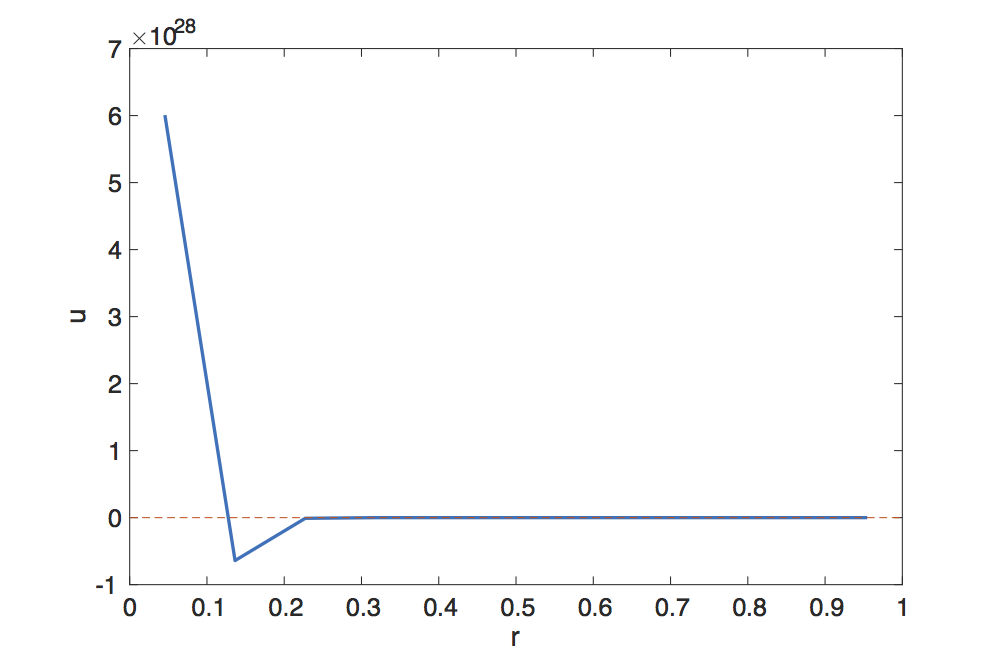}
(b)
\end{minipage}\\
\begin{minipage}{0.5\hsize}
\centering
\includegraphics[width=7.0cm]{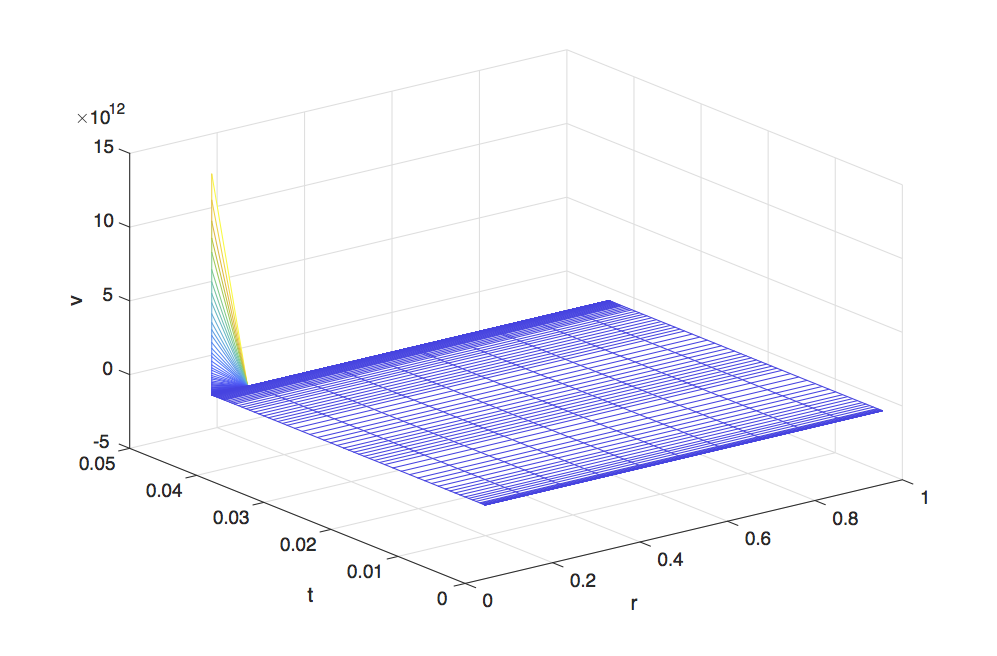}
(c)
\end{minipage}
\begin{minipage}{0.5\hsize}
\centering
\includegraphics[width=7.0cm]{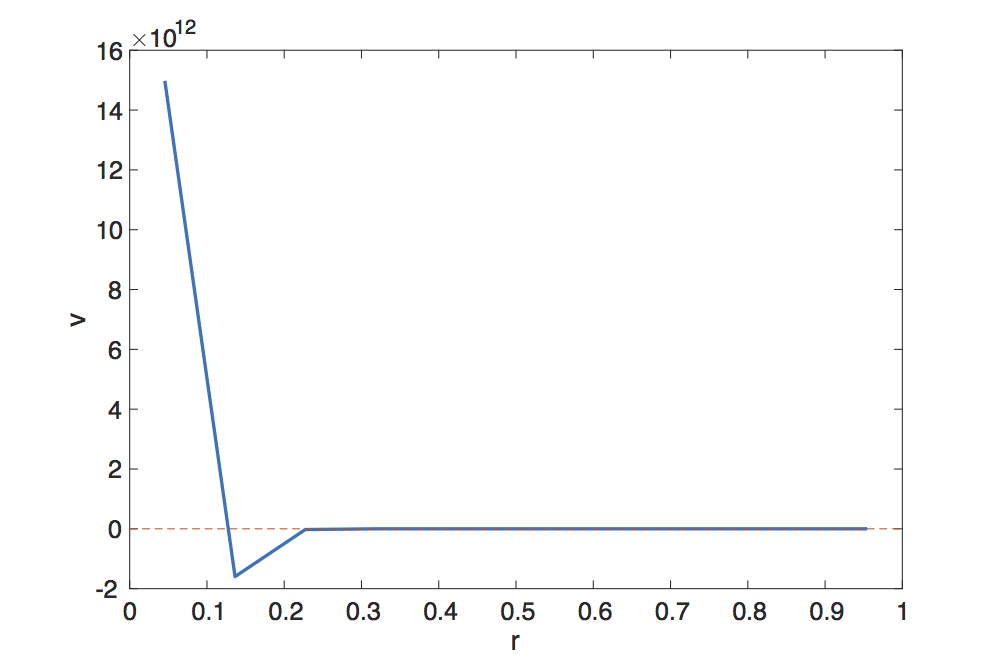}
(d)
\end{minipage}\\
\caption{A blow-up trajectory for (FvKS) with $(d,N)=(3,11)$}
\label{fig-Ex3}
A blow-up trajectory with the initial data (\ref{initial-ex3}) are drawn.
(a) : the $(t,r,u)$-plot of the blow-up trajectory. 
(b) : the $(r,u)$-plot of the blow-up trajectory near $t=t_{\max} \approx 0.04$. 
(a) : the $(t,r,v)$-plot of the blow-up trajectory. 
(b) : the $(r,v)$-plot of the blow-up trajectory near $t=t_{\max} \approx 0.04$. 
\end{figure}

\begin{rem}
The statement \lq\lq Failed" comes from the failure of Step 2 in Algorithm \ref{alg-validation2}.
That is, the matrix $A(x)$ could not be validated to be negative definite, although corresponding equilibria admit only eigenvalues with negative real parts (at least in the numerical sense).
This might be caused by the change of eigenvalue distributions of matrices $Df(x)$ via their symmetrizations.
\par
On the other hand, several eigenvalues of $Df(x_\ast)$ at equilibria $x_\ast$ are actually accumulated in the numerical sense, which implies that the computed eigenvectors may be linearly dependent. 
In such a case, we cannot apply the eigenmatrix diagonalizing $Df(x_\ast)$ to determining the matrix $Y$ in Proposition \ref{prop-Lyap}.
Instead we apply the Schur decomposition of $Df(x_\ast)$ to checking eigenvalues, and to determining $Y$; $Y=I$, as indicated in Remark \ref{rem-Lyap}.
\par
The similar cases occur for the system (FvKS) with quasi-parabolic compactifications.
\end{rem}

%
%	new subsubsection
%
\subsubsection{Quasi-parabolic compactification}
Let
\begin{equation*}
\tilde f_j(x_1,\cdots, x_{2N}) := \kappa^{-(1+\alpha_j)}\tilde f_j(\kappa^2 x_1,\cdots, \kappa^2 x_N, \kappa x_{N+1}, \cdots, \kappa x_{2N}).
\end{equation*}
Then we have
\begin{align*}
\tilde f_1 &=\frac{r_1^{1-d}}{h}\left(r_{1+\frac{1}{2}}^{d-1}\frac{\bar u_2 - \bar u_1}{h}\right)\kappa^{-1} - \frac{r_1^{1-d}}{h}\left(r_{1+\frac{1}{2}}^{d-1}\frac{\bar v_2 - \bar v_1}{h}\bar u_1\right),\\
\tilde f_j &= \frac{r_j^{1-d}}{h}\left(r_{j+\frac{1}{2}}^{d-1}\frac{\bar u_{j+1} - \bar u_{j}}{h}-r_{j-\frac{1}{2}}^{d-1}\frac{\bar u_{j} - \bar u_{j-1}}{h}\right) \kappa^{-1} - \frac{r_j^{1-d}}{h}\left(r_{j+\frac{1}{2}}^{d-1}\frac{\bar v_{j+1} - \bar v_{j}}{h}\bar u_j - r_{j-\frac{1}{2}}^{d-1}\frac{\bar v_{j} - \bar v_{j-1}}{h}\bar u_{j-1}\right),\\
&\quad\quad  (j=2,\cdots, N-1)\\
\tilde f_N &=\frac{r_{N}^{1-d}}{h}\left(-r_{N-\frac{1}{2}}^{d-1}\frac{\bar u_{N} - \bar u_{N-1}}{h}\right)\kappa^{-1} -\frac{r_{N}^{1-d}}{h}\left(-r_{N-\frac{1}{2}}^{d-1}\frac{\bar v_{N} - \bar v_{N-1}}{h}\bar u_{N}\right),
\end{align*}
and
\begin{align*}
\tilde f_{N+1} &=\frac{r_1^{1-d}}{h}\left(r_{1+\frac{1}{2}}^{d-1}\frac{\bar v_2 - \bar v_1}{h}\right) \kappa^{-1} - \bar v_1 \kappa^{-1}+ \bar u_1,\\
\tilde f_{N+j} &=\frac{r_j^{1-d}}{h}\left(r_{j+\frac{1}{2}}^{d-1}\frac{\bar v_{j+1}-\bar v_j}{h}-r_{j-\frac{1}{2}}^{d-1}\frac{\bar v_j - \bar v_{j-1}}{h}\right) \kappa^{-1} - \bar v_j \kappa^{-1} + \bar u_j,\\
&\quad\quad  (j=2,\cdots, N-1)\\
\tilde f_{2N} &=\frac{r_{N}^{1-d}}{h}\left(-r_{N-\frac{1}{2}}^{d-1}\frac{\bar v_{N} - \bar v_{N-1}}{h}\right) \kappa^{-1} - \bar v_{N} \kappa^{-1} + \bar u_{N}.
\end{align*}

Recall that the desingularized vector field associated with the vector field $y'=f(y)$ on $\mathbb{R}^{2N}$ with quasi-parabolic compactification of type $\alpha$ is (\ref{ODE-desing-para}).
The sum $G(x) := \sum_{j=1}^{2N} \frac{x_j^{2\beta_j-1}}{\alpha_j} \tilde f_j(x)$ is necessary to be computed.
Now we have
\begin{align*}
\sum_{j=1}^{N} \frac{x_j^{2\beta_j-1}}{\alpha_j} \tilde f_j(x) &= \sum_{j=1}^{N} \frac{\bar u_j}{2} \tilde f_j(x)\\
	&= \frac{1}{2h^2}\sum_{j=1}^{N-1} r_{j+\frac{1}{2}}^{d-1} \{-r_{j+1}^{1-d}\bar u_{j+1} + r_j^{1-d}\bar u_j \}\cdot  \{\kappa^{-1}(\bar u_{j+1} - \bar u_j) - (\bar v_{j+1}-\bar v_j)\bar u_j \},\\
\sum_{j=N+1}^{2N} \frac{x_j^{2\beta_j-1}}{\alpha_j} \tilde f_j(x) &= \sum_{j=1}^{N} \bar v_j^3 \tilde f_{N+j}(x)\\
	&= \frac{\kappa^{-1}}{h^2}	\sum_{j=1}^{N-1} r_{j+\frac{1}{2}}^{d-1} \{-r_{j+1}^{1-d}\bar v_{j+1}^3+ r_j^{1-d}\bar v_j^3 \} (\bar v_{j+1} - \bar v_j) - \kappa^{-1}\sum_{j=1}^N \bar v_j^4 +\sum_{j=1}^N \bar v_j^3 \bar u_j.
\end{align*}
Therefore we have
\begin{align}
\notag
G(x)	&= \frac{1}{2h^2}\sum_{j=1}^{N-1} r_{j+\frac{1}{2}}^{d-1} \{-r_{j+1}^{1-d}\bar u_{j+1} + r_j^{1-d}\bar u_j \}\cdot  \{\kappa^{-1}(\bar u_{j+1} - \bar u_j) - (\bar v_{j+1}-\bar v_j)\bar u_j \}\\
\label{Gx}
	&+ \frac{\kappa^{-1}}{h^2}	\sum_{j=1}^{N-1} r_{j+\frac{1}{2}}^{d-1} \{-r_{j+1}^{1-d}\bar v_{j+1}^3+ r_j^{1-d}\bar v_j^3 \} (\bar v_{j+1} - \bar v_j) - \kappa^{-1}\sum_{j=1}^N \bar v_j^4 +\sum_{j=1}^N \bar v_j^3 \bar u_j.
\end{align}

Summarizing these arguments, we have the concrete form of the desingularized vector field:

\begin{lem}
\label{lem-para-KS}
The desingularized vector field for (FvKS) with the quasi-parabolic compactification of type $\alpha$ is the following:
\begin{align*}
%\label{KS-desing-para-1}
\frac{d\bar u_i}{d\tau} &= \frac{1}{4}\left(1+3\sum_{j=1}^N (\bar u_j^2 + \bar v_j^4)\right) \tilde f_i(x) - 2\bar u_i G(x),\quad i=1,\cdots, N,\\
%\label{KS-desing-para-2}
\frac{d\bar v_i}{d\tau} &= \frac{1}{4}\left(1+3\sum_{j=1}^N (\bar u_j^2 + \bar v_j^4)\right) \tilde f_{N+i}(x) - \bar v_i G(x),\quad i=1,\cdots, N,
\end{align*}
where $x=(x_1,\cdots, x_{2N})\equiv (\bar u_1,\cdots, \bar u_N, \bar v_1,\cdots, \bar v_N)$ and $G(x)$ is given in (\ref{Gx}).
\end{lem}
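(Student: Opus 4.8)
\emph{Proof plan.} The strategy is to specialise the general desingularised vector field (\ref{ODE-desing-para}) to the present system, so that almost everything reduces to substitution. By Lemma \ref{lem-scaling-KS}, (FvKS) is asymptotically quasi-homogeneous at infinity with $n=2N$, type $\alpha=(\underbrace{2,\dots,2}_{N},\underbrace{1,\dots,1}_{N})$ and order $k+1=2$, hence $k=1$. The relation (\ref{LCM}), $\alpha_i\beta_i\equiv c$, then forces $c=2$, with $\beta_i=1$ for $1\le i\le N$ and $\beta_i=2$ for $N<i\le 2N$; this matches the quasi-parabolic compactification (\ref{para-KS}), for which $2c=4$, $p(x)^4=\sum_{j=1}^{N}(\bar u_j^2+\bar v_j^4)$ and $\kappa^{-1}=1-p(x)^4$.

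First I would compute the scalar prefactor appearing in (\ref{ODE-desing-para}): since $2c=4$ and $\tfrac{2c-1}{2c}=\tfrac34$, one has $1-\tfrac34\bigl(1-p(x)^4\bigr)=\tfrac14\bigl(1+3p(x)^4\bigr)=\tfrac14\bigl(1+3\sum_{j=1}^{N}(\bar u_j^2+\bar v_j^4)\bigr)$, which is exactly the coefficient of $\tilde f_i(x)$ in the asserted equations. Next, the subtracted term in (\ref{ODE-desing-para}) is $-\alpha_i x_i\sum_{j=1}^{2N}\tfrac{x_j^{2\beta_j-1}}{\alpha_j}\tilde f_j(x)=-\alpha_i x_i\,G(x)$; for the $\bar u$-block ($1\le i\le N$) we have $\alpha_i=2$, giving $-2\bar u_iG(x)$, and for the $\bar v$-block ($i=N+l$) we have $\alpha_i=1$, giving $-\bar v_lG(x)$. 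Combined with the prefactor this already yields the two displayed families, provided $\tilde f_i(x)$ and $G(x)$ are made explicit. The $\tilde f_j$ come from (\ref{f-tilde}) by substituting $u_j=\kappa^2\bar u_j$, $v_j=\kappa\bar v_j$ into the components of (FvKS) and multiplying by $\kappa^{-(k+\alpha_j)}$: the discrete-Laplacian terms are homogeneous of the appropriate bi-degree and keep exactly one residual factor $\kappa^{-1}$, the chemotaxis terms $r^{d-1}(\partial_r v)\,u$ are quasi-homogeneous of the leading order and carry no residual $\kappa$, and in the $\bar v$-equations the production term $+\bar u_j$ is leading order (no $\kappa$) while the decay term $-\bar v_j$ is lower order (factor $\kappa^{-1}$); this reproduces the list of $\tilde f_j$ written just above the statement.

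It then remains only to evaluate $G(x)=\sum_{j=1}^{2N}\tfrac{x_j^{2\beta_j-1}}{\alpha_j}\tilde f_j(x)$. Because $\beta_j=1$ for $j\le N$ one has $x_j^{2\beta_j-1}/\alpha_j=\bar u_j/2$, and because $\beta_j=2$ for $j>N$ one has $x_j^{2\beta_j-1}/\alpha_j=\bar v_{j-N}^{3}$, so $G(x)=\tfrac12\sum_{j=1}^{N}\bar u_j\tilde f_j(x)+\sum_{l=1}^{N}\bar v_l^{3}\tilde f_{N+l}(x)$. Inserting the explicit $\tilde f_j$ and reindexing by a discrete summation by parts — for each interior interface $r_{j+1/2}$, collecting the single flux $r_{j+1/2}^{d-1}(\,\cdot_{j+1}-\cdot_j\,)$ that appears in $\tilde f_j$ with weight $r_j^{1-d}$ and in $\tilde f_{j+1}$ with weight $-r_{j+1}^{1-d}$, for both the diffusion and the upwinded chemotaxis parts — collapses the double differences to the single sum over $j=1,\dots,N-1$ displayed in (\ref{Gx}); no boundary contributions appear, since the discretisation only has interior fluxes, the homogeneous Neumann condition being already built into the first and last components of (FvKS). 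Assembling the prefactor, the $\tilde f_i$ and $G(x)$ gives the asserted vector field. The only real obstacle is the bookkeeping in this last step: keeping the indices, the weights $r_{j+1/2}^{d-1}$ and $r_j^{1-d}$, the sign of each flux, and the upwind point $\bar u_j$ in the chemotaxis term consistent throughout the rearrangement; everything else is a direct substitution into (\ref{ODE-desing-para}).
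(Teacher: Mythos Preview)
Your proposal is correct and follows essentially the same route as the paper: the lemma is stated in the paper as a summary of the computations immediately preceding it, namely specialising (\ref{ODE-desing-para}) with $c=2$, $\alpha=(2,\dots,2,1,\dots,1)$, $\beta=(1,\dots,1,2,\dots,2)$, writing out the $\tilde f_j$, and then collapsing $G(x)=\sum_j x_j^{2\beta_j-1}\tilde f_j/\alpha_j$ via the interface-by-interface summation by parts you describe. Your identification of the prefactor $\tfrac14(1+3p(x)^4)$ and of the coefficients $-2\bar u_i$, $-\bar v_i$ from $\alpha_i x_i$ is exactly how the paper arrives at the displayed equations.
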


Our concerning blow-up solution is a trajectory of the desingularized vector field asymptotic to an equilibrium on the horizon $\{p(x) = 1\}$, which generally depends on $(d,N)$, while it corresponds to a point validated in Section \ref{section-Ex3-dir}.
Following Algorithm \ref{alg-validation1}, we validate global trajectories for the vector field in Lemma \ref{lem-para-KS} asymptotic to $\mathcal{E} = \partial \mathcal{D}$.
The initial data are set as (\ref{initial-ex3}) with application of $T_{para}$.
As for computations of $\kappa(y)$, we have applied the Krawczyk method (e.g., \cite{T2011}) to $F_y(\kappa)= \kappa^4 - \kappa^3 - p(y)^4 = 0$ appeared in Lemma \ref{lem-zero-compactification}.
Final validated results are collected in Table \ref{Tab:Ex3_2}.

\begin{table}[ht]
\caption{Validated results for (FvKS) using the quasi-parabolic compactification: numerical validations prove $x(\tau_N)\in {\rm int}\,N$ and (\ref{tmax-estimate}) yields the inclusion of the blow-up time $t_{\max}$.}
\centering
\resizebox{\textwidth}{!}{
\begin{tabular}{ccccc}
\hline 
$(d,N)$ & $\epsilon$ & $\tau_N$ & $t_{\max}$ & Exec. time\\
\hline\\[-2mm]
$(4,4)$ & $1.5389933993880384 \times 10^{-7}$ & $2.5104000513035319$ & $0.041635_{002136609429}^{154750508511}$ & 1 m 52.82 s\\[1mm]
$(4,5)$ & $9.5559381772732721\times 10^{-8}$ & $2.3018259253322216$ & $
0.041460_{149021090166}^{225199701329}$ & 4 m 00.40 s\\[1mm]
$(4,6)$ & $5.9334857761040084\times 10^{-8}$ & $2.151636139653439$ & $
0.0406815_{25425984681}^{65088442458}$ & 8 m 32.33 s\\[1mm]
$(4,7)$ & $1.1739085287969579\times 10^{-8}$ & $2.1158872071025688$ & $
0.03993049_{2091754095}^{9115842435}$ & 15 m 39.75 s\\[1mm]
$(4,8)$ & $1.6105100000000006\times 10^{-10}$ & $2.2551921883785618$ & $0.039401863_{359373755}^{463992299}$ & 30 m 21.43 s\\[1mm]
$(4,9)$ & $1.4641000000000004\times 10^{-10}$ & $2.1389222924223637$ & $
0.0390410168_{01699031}^{99000650}$ & 52 m 56.17 s\\[1mm]
$(4,10)$ & Failed & - & - & - \\[1mm]
$(3,4)$ & $2.2532402360440276\times 10^{-7}$ & $3.1345969238600971$ & $
0.044016_{358467806576}^{898408608799}$ & 1 m 21.04 s\\[1mm]
$(3,5)$ & $1.2718953713950728\times 10^{-7}$ & $2.8105206084304078$ & $
0.042811_{328397618989}^{795404836206}$ & 2 m 49.74 s\\[1mm]
$(3,6)$ & Failed & - & - & - \\[1mm]
$(2,4)$ & Failed & - & - & - \\[1mm]
\hline 
\end{tabular}%
}
\label{Tab:Ex3_2}
\end{table}

\subsubsection{Final remark: Scalings for (FvKS)}
The scaling derived in Lemma \ref{lem-scaling-KS} does {\em not} actually reflect the scaling in the original Keller-Segel system (\ref{KS-PDE}).
Indeed, the system (\ref{KS-PDE}) replacing the second equation by $v_t = \Delta v + u$ possesses the following scaling invariance:
\begin{equation}
\label{scaling-KS-PDE}
u_\lambda(x,t) := \lambda^2 u(\lambda x, \lambda^2 t),\quad v_\lambda(x,t) := v(\lambda x, \lambda^2 t),\quad \lambda > 0.
\end{equation}
In particular, the value of $v$ is not scaled, which is different from the type derived in Lemma \ref{lem-scaling-KS}.
We can consider another scaling to (FvKS) regarding the grid size parameter $h$ as an independent variable. 
Actually, we have the following scaling law for (FvKS), which will reflect the scaling (\ref{scaling-KS-PDE}).
\begin{lem}
Regard $h$ as an independent variable with trivial time evolution $dh/dt = 0$.
Then the system (FvKS) is asymptotically quasi-homogeneous of the following type and order $3$:
\begin{equation*}
\alpha = (\underbrace{2\dots,2}_{N}, \underbrace{0\dots,0}_{N}, -1)
\end{equation*}
with natural extension of type for nonpositive integers.
In other words, (FvKS) is asymptotically quasi-homogeneous under the scaling $u_i\mapsto s^2u_i$, $v_i\mapsto v_i$ for $i=1,\cdots, N$ and $h\mapsto s^{-1}h$.
\end{lem}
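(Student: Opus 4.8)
The plan is to expand the right-hand side of (FvKS) explicitly as a function of the enlarged state $z=(u_1,\dots,u_N,v_1,\dots,v_N,h)\in\mathbb{R}^{2N+1}$ and to track the powers of $h$ carried by the geometric weights $r_j^{1-d}$ and $r_{j\pm1/2}^{d-1}$. First I would substitute $r_j=(j-\tfrac{1}{2})h$, $r_{j+1/2}=jh$, $r_{j-1/2}=(j-1)h$ into every term. The key algebraic observation is that each weight combination appearing in the scheme, namely $\frac{r_i^{1-d}}{h}\cdot r_{i\pm1/2}^{d-1}\cdot\frac{1}{h}$, equals $c_i^{\pm}h^{-2}$ with $c_i^{\pm}$ depending only on $d$ and $i$, because the $h$-powers $h^{1-d}\cdot h^{-1}\cdot h^{d-1}\cdot h^{-1}=h^{-2}$ combine independently of $d$. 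Hence $du_i/dt$ is a finite sum of terms $c\,h^{-2}(u_{i\pm1}-u_i)$ and $c\,h^{-2}(v_{i\pm1}-v_i)\,u_\ell$ with $\ell\in\{i,i-1\}$, while $dv_i/dt$ is a finite sum of terms $c\,h^{-2}(v_{i\pm1}-v_i)$ plus the two zeroth-order terms $-v_i+u_i$; the boundary rows $i=1$ and $i=N$ have the same shape with some terms absent.

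Next I would apply the candidate scaling $z\mapsto r^{\alpha}z$, that is $u_i\mapsto r^2u_i$, $v_i\mapsto v_i$, $h\mapsto r^{-1}h$, and compare homogeneity degrees component by component. Since $h\mapsto r^{-1}h$ turns $h^{-2}$ into $r^2h^{-2}$, each diffusive term of the $u_i$-equation acquires a factor $r^2\cdot r^2=r^4$ and each advective term acquires $r^2\cdot r^2=r^4$ (the $v$-factor being unscaled); thus the whole $u_i$-equation is homogeneous of order $r^4=r^{k+\alpha_{u_i}}$ with $k=2$ and $\alpha_{u_i}=2$, i.e.\ exactly quasi-homogeneous with no remainder. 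In the $v_i$-equation, each diffusive term acquires the factor $r^2$ and the term $u_i$ acquires $r^2$, matching $r^{k+\alpha_{v_i}}=r^{2+0}$, whereas the term $-v_i$ is unscaled and hence of strictly lower order. Finally $dh/dt\equiv0$ is homogeneous of every order, in particular of the required order $r^{k+\alpha_h}=r^{2-1}=r^1$.

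I would then identify $f_{\alpha,2}$, the quasi-homogeneous vector field of type $\alpha$ and order $3$, as: the full $u_i$-equations, the $v_i$-equations with the $-v_i$ term deleted, and $0$ in the $h$-slot; the previous paragraph shows this $f_{\alpha,2}$ is genuinely quasi-homogeneous of that type and order. The difference $f-f_{\alpha,2}$ then has $-v_i$ in the $v_i$-slots and $0$ elsewhere, so
\[
r^{-(k+\alpha_j)}\bigl\{f_j(r^{\alpha}z)-r^{k+\alpha_j}(f_{\alpha,2})_j(z)\bigr\}
\]
vanishes in the $u_i$- and $h$-components and equals $-r^{-2}v_i$ in the $v_i$-components; this tends to $0$ as $r\to+\infty$ uniformly for $z$ on the unit sphere (intersected with $\{h>0\}$, where the vector field is defined), since $|v_i|\le1$ there. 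By the definition of asymptotic quasi-homogeneity this proves the lemma.

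I expect the only genuine work to lie in the $h$-power bookkeeping for the weights and in carrying the two boundary rows $i=1,N$ through the argument; the rest is matching homogeneity exponents term by term. One caveat worth recording is that $f$ is singular along $\{h=0\}$, so the statement is understood on the physical half-space $\{h>0\}$; this is harmless because the subtracted remainder $-v_i$ is independent of $h$, so the uniform limit on the sphere persists once a neighborhood of $\{h=0\}$ is excluded.
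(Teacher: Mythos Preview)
Your proof is correct. The paper states this lemma without proof, so there is nothing to compare against; your argument supplies exactly the verification one would expect. The key computation---that every geometric weight in the scheme factors as $c_i^{\pm}h^{-2}$ because $h^{1-d}\cdot h^{-1}\cdot h^{d-1}\cdot h^{-1}=h^{-2}$ regardless of $d$---is the whole content, and you carry it through correctly: the $u_i$-equations are exactly quasi-homogeneous of degree $r^4=r^{k+2}$, the $v_i$-equations are degree $r^2=r^{k+0}$ up to the lower-order $-v_i$, and $dh/dt=0$ is trivially of the required degree. Your identification of $f_{\alpha,2}$ and the remainder $-r^{-2}v_i\to0$ is clean.

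Your caveat about the singularity at $\{h=0\}$ is well placed: the lemma invokes a ``natural extension of type for nonpositive integers,'' which the paper does not formalize, and in particular the functional $p$ and the uniform-on-the-sphere condition are not literally defined for $\alpha_h=-1$. Since the remainder after subtracting the quasi-homogeneous part is $-v_i$, independent of $h$, the limit is uniform on any portion of the sphere bounded away from $\{h=0\}$, which is all that can be meant here. You might state this slightly more explicitly, but the mathematics is sound and matches what the paper is asserting informally.
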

The authors have tried computing trajectories asymptotic to the horizon (for directional compactifications) with the above scaling, but they could find {\em no} such trajectories.
The scaling $h\mapsto s^{-1}h$ has a potential to link a rescaling algorithm for numerics of partial differential equations (e.g. \cite{BK1988}).

\section*{Conclusion}
In the present paper, we have derived a numerical validation procedure of blow-up solutions for vector fields with asymptotic quasi-homogeneity at infinity.
Our proposing numerical validation methodology is essentially the same as the previous study by authors and their collaborators \cite{TMSTMO} except the mathematical formulation of compactifications as well as time-variable desingularizations.
We have applied quasi-homogeneous compactifications to describing the infinity so that the desingularized vector field for asymptotically quasi-homogeneous ones can appropriately describe dynamics at infinity.
\par
In the present paper, we have also introduced a new quasi-homogeneous compactification called quasi-parabolic one, which is an alternative of the quasi-Poincar\'{e} compactification \cite{Mat}.
This compactification determines a global chart unlike directional compactifications, and overcomes the lack of smoothness of desingularized vector fields at infinity which arise in cases of Poincar\'{e}-type compactifications.
The former property enables us to validate blow-up solutions through sign-changing trajectories (Section \ref{section-ex1}), and the latter enables us to apply our validation procedure to asymptotically quasi-homogeneous vector fields (Sections \ref{section-ex2} and \ref{section-ex3}).
Quasi-homogeneous compactifications such as directional and admissible quasi-homogeneous ones will open the door to numerical validations of blow-up solutions for various polynomial vector fields including finite dimensional approximations of {\em systems of partial differential equations}.
%\par
%\bigskip
%We end this paper by leaving future directions.
%Applications to partial differential equations as well as their systems are already mentioned in authors' previous article \cite{TMSTMO}.
%We mention another point of view.
%In many physico-chemical process such as solid fuel combustion \cite{} and reaction kinetics \cite{}, {\em exponential nonlinearities} are involved due to the Arrhenius law.
%Exponential growth is stronger than any polynomial growth, and (quasi-)homogeneity is violated in general.
%Derivations of compactifications and transformations of vector fields at infinity which are suitable for both exponential and polynomial growth are necessary, which will exceed the limitation of our targeting problems.

\section*{Acknowledgements}
KM was partially supported by Program for Promoting the reform of national universities (Kyushu University), Ministry of Education, Culture, Sports, Science and Technology (MEXT), Japan, World Premier International Research Center Initiative (WPI), MEXT, Japan, and JSPS Grant-in-Aid for Young Scientists (B) (No. 17K14235).
AT was partially supported by JSPS Grant-in-Aid for Young Scientists (B) (No. 15K17596).

\appendix
\section{Schur decompositions}
\label{appendix-Schur}

In this section we review Schur decompositions of squared matrices.
\begin{prop}[Schur decomposition, e.g., \cite{GL1996}]
\label{prop-Schur1}
Let $A\in M_n(\mathbb{C})$ : complex $n\times n$ matrix.
Then there exists a unitary matrix $Q\in U(n)$ such that
\begin{equation*}
Q^H A Q = T \equiv D + N,
\end{equation*}
where $Q^H$ is the Hermitian transpose of $Q$, $D = \diag(\lambda_1,\cdots, \lambda_n)$ and $N\in M_n(\mathbb{C})$ is strictly upper triangular.
Furthermore, $Q$ can be chosen so that the eigenvalues $\lambda_i$ appear in any order along the diagonal.
We shall call $T$ {\em a Schur normal form of $A$}.
\end{prop}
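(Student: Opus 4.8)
The plan is to prove the statement by induction on the matrix size $n$, peeling off one eigenvalue at each step and exploiting the fact that $\mathbb{C}$ is algebraically closed. For $n=1$ the claim is trivial with $Q=(1)$ and $T=A$. For the inductive step I would assume the result for all $(n-1)\times(n-1)$ complex matrices and take $A\in M_n(\mathbb{C})$. First pick any eigenvalue $\lambda_1$ of $A$ (which exists by the fundamental theorem of algebra) together with a unit eigenvector $v_1$, and complete $\{v_1\}$ to an orthonormal basis $\{v_1,\dots,v_n\}$ of $\mathbb{C}^n$ by Gram--Schmidt. Let $Q_1\in U(n)$ be the matrix whose columns are $v_1,\dots,v_n$. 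Since $Av_1=\lambda_1 v_1$ and the columns are orthonormal, a direct block computation gives
\begin{equation*}
Q_1^H A Q_1 = \begin{pmatrix} \lambda_1 & w^H \\ 0 & A_1 \end{pmatrix}
\end{equation*}
for some $w\in\mathbb{C}^{n-1}$ and $A_1\in M_{n-1}(\mathbb{C})$.

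By the inductive hypothesis there is $\tilde Q\in U(n-1)$ with $\tilde Q^H A_1 \tilde Q$ upper triangular; then $Q:=Q_1\,\diag(1,\tilde Q)\in U(n)$ satisfies that $T:=Q^H A Q$ is upper triangular, and writing $T=D+N$ with $D$ the diagonal part and $N$ the strictly upper triangular part yields the asserted normal form. To identify the diagonal of $T$ with the spectrum of $A$, note that $T$ is unitarily similar to $A$, hence shares its characteristic polynomial, and for a triangular matrix that polynomial factors as $\prod_{i=1}^n(\lambda-T_{ii})$; thus the $T_{ii}$ are precisely the eigenvalues of $A$ counted with multiplicity. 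For the ordering statement, the key remark is that the choice of $\lambda_1$ at the first step is completely free, and the inductive hypothesis already grants the same freedom for the remaining $n-1$ eigenvalues; threading a prescribed ordering through the recursion produces a $Q$ realizing that order along the diagonal.

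I do not expect a serious obstacle here, since this is the classical argument. The only points that require a little care are invoking the algebraic closure of $\mathbb{C}$ to produce an eigenvector that starts each reduction, and making the bookkeeping of the prescribed eigenvalue ordering explicit through the induction; both are routine once the block-triangular reduction $Q_1^H A Q_1$ is set up.
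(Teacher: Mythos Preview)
Your argument is the standard inductive proof of the Schur decomposition and is correct. Note, however, that the paper does not give its own proof of this proposition: it is stated as a known result with a reference to the literature (Golub--Van Loan), so there is nothing to compare against.
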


When we treat all computations in {\em real} floating number or interval arithmetic, the real version of Schur decompositions can be applied.

\begin{prop}[Real Schur decomposition, e.g., \cite{GL1996}]
\label{prop-Schur2}
Let $A\in M_n(\mathbb{R})$ : real $n\times n$ matrix.
Then there exists an orthogonal matrix $Q\in O(n)$ such that
\begin{equation*}
Q^T A Q = T\equiv \begin{pmatrix}
R_{11} & R_{12} & \cdots & R_{1m} \\
0 & R_{22} & \cdots & R_{2m} \\
\vdots & \vdots & \ddots & \vdots \\
0 & 0 & \cdots & R_{mm} 
\end{pmatrix},
\end{equation*}
where each $R_{ii}$ is either a $1\times 1$ or a $2\times 2$ matrix having complex conjugate eigenvalues.
We shall call $T$ {\em a real Schur normal form of $A$}.
\end{prop}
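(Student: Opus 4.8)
The plan is to prove the real Schur decomposition by induction on $n$, reducing the dimension by one or two at each step according to whether $A$ possesses a real eigenvalue or only genuinely complex ones. The key preliminary fact I would record is that every real $n\times n$ matrix $A$ with $n\geq 1$ admits an $A$-invariant subspace $V\subseteq \mathbb{R}^n$ of dimension $1$ or $2$: if $A$ has a real eigenvalue $\lambda$, take $V=\Span\{v\}$ for a real eigenvector $v$; otherwise fix a complex eigenvalue $\lambda=a+ib$ with $b\neq 0$ and a complex eigenvector $z=u+iw$, and set $V=\Span_{\mathbb{R}}\{u,w\}$. In this latter case one checks that $u$ and $w$ are linearly independent over $\mathbb{R}$ (a real dependence would force $z$ to be a real multiple of a single real vector and hence $\lambda$ to be real), so $\dim V=2$, while $Au=au-bw$ and $Aw=bu+aw$ show $V$ is $A$-invariant and that the matrix of $A|_V$ in the basis $\{u,w\}$ has characteristic roots $a\pm ib$.

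For the inductive step, apply Gram--Schmidt to a basis of such a $V$ to obtain an orthonormal basis $q_1,\dots,q_d$ of $V$ ($d=\dim V\in\{1,2\}$) and extend it to an orthonormal basis $q_1,\dots,q_n$ of $\mathbb{R}^n$; let $Q_1\in O(n)$ have these vectors as columns. Invariance of $V$ forces $Q_1^T A Q_1$ to be block upper triangular, $Q_1^T A Q_1=\begin{pmatrix} R_{11} & B\\ 0 & A'\end{pmatrix}$, with $R_{11}$ either $1\times 1$ or $2\times 2$ with a complex-conjugate pair of eigenvalues, and $A'$ a real $(n-d)\times(n-d)$ matrix. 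By the induction hypothesis there is $Q'\in O(n-d)$ with $Q'^T A' Q'$ in real Schur form; then $Q:=Q_1\,\diag(I_d,Q')\in O(n)$ and $Q^T A Q$ is in the asserted form. The base cases $n=1$ and $n=2$ are immediate.

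An alternative route is to deduce the real form from the complex Schur decomposition of Proposition~\ref{prop-Schur1}: the nonreal eigenvalues of a real matrix occur in conjugate pairs, so after reordering $T$ so that each conjugate pair sits in adjacent diagonal positions, a real orthogonal change of basis in each corresponding coordinate plane replaces the offending $2\times 2$ nonreal diagonal block by a real block while preserving upper triangularity elsewhere. I expect the only genuine work in either approach to be bookkeeping --- in the inductive approach, the verification that the span of the real and imaginary parts of a complex eigenvector is genuinely two-dimensional and $A$-invariant; in the complex-Schur approach, tracking the conjugate-pair reordering (a product of Givens-type unitary similarities) and checking that the final similarity can be taken real on the relevant blocks. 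Since the statement is classical and already attributed to \cite{GL1996}, it would in fact suffice to cite it, but the inductive argument above is a short self-contained proof.
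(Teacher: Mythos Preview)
Your inductive argument is correct and is the standard proof of the real Schur decomposition. The paper, however, does not prove this proposition at all: it is stated in an appendix as a known fact with attribution to \cite{GL1996}, and no argument is given. So there is nothing to compare against --- you have supplied a self-contained proof where the paper is content to cite the literature, and you correctly anticipated this in your final sentence.

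One small remark on your writeup: after applying Gram--Schmidt to $\{u,w\}$ you obtain an orthonormal basis of $V$ that is generally \emph{not} $\{u,w\}$ itself, so the $2\times 2$ block $R_{11}$ is not literally $\begin{pmatrix} a & b\\ -b & a\end{pmatrix}$ but a matrix similar to it; you should say explicitly that $R_{11}$ is the matrix of $A|_V$ in the new orthonormal basis and hence has the same eigenvalues $a\pm ib$. This is implicit in what you wrote but worth making precise.
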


A merit of Schur decompositions is that we can apply it to {\em arbitrary} square matrices.
In particular, change of coordinates via Schur decompositions can be realized {\em no matter what the multiplicities of any eigenvalues are}.

\section{Concrete calculations of an upper bound of $t_{\max}$ with quasi-parabolic compactifications}
\label{appendix-estimates}

In this section, we consider the rigorous validation of the maximal existence time
\[
	t_{\max} = \int_0^\infty \kappa^{-k}\left(1-\frac{2c-1}{2c}\kappa^{-1}\right)d\tau
\]
of solution trajectories with quasi-parabolic compactifications and computer assistance.
\par
First of all, we compute the following integral representing the time of integration of computed trajectory for desingularized vector fields {\em in $t$-timescale} in advance:
\[
	t_N=\int_0^{\tau_N}\left(1-p(x(\tau))^{2c}\right)^{k}\left(1-\frac{2c-1}{2c}\left(1-p(x(\tau))^{2c}\right)\right)d\tau.
\]
As mentioned in Section \ref{section-explicit}, the estimate of $|1-p(x)^{2c}|$ is essential to computation of an upper bound $C_{n,\alpha,N}(L)$.
At first, we derive the estimate with the type $\alpha=(1,2)$ and $c=2$ as an example.
Let $x_\ast = (x^\ast_1, x^\ast_2) \in \mathcal{E}$ and assume that a Lyapunov function $L(x)$ is validated in a vicinity of $x_\ast$. 
Then
\begin{align*}
x_1^4+x_2^2
=&(x_1-x_1^\ast+x_1^\ast)^4+(x_2-x_2^\ast+x_2^\ast)^2\\
=&(x_1-x_1^\ast)^4+4(x_1-x_1^\ast)^3x_1^\ast+6(x_1-x_1^\ast)^2(x_1^\ast)^2+4(x_1-x_1^\ast)(x_1^\ast)^3+(x_1^\ast)^4\\
&+(x_2-x_2^\ast)^2+2(x_2-x_2^\ast)x_2^\ast+(x_2^\ast)^2.
\end{align*}
Now $p(x_\ast)=1$ holds since $x_\ast\in \mathcal{E}$.
Thus we have
\begin{align*}
\left|1-p(x)^{2c}\right|
=&\Big|(x_1-x_1^\ast)^4+4(x_1-x_1^\ast)^3x_1^\ast+6(x_1-x_1^\ast)^2(x_1^\ast)^2+4(x_1-x_1^\ast)(x_1^\ast)^3\\
&+(x_2-x_2^\ast)^2+2(x_2-x_2^\ast)x_2^\ast\Big|\\
=&\left|\left[4(x_1^\ast)^3~2x_2^\ast\right]\left[\begin{array}{l}
x_1-x_1^\ast\\x_2-x_2^\ast
\end{array}\right]+\left[6(x_1^\ast)^2~1\right]\left[\begin{array}{l}
(x_1-x_1^\ast)^2\\(x_2-x_2^\ast)^2
\end{array}\right]\right.\\
&\left.+\left[4x_1^\ast~0\right]\left[\begin{array}{l}
(x_1-x_1^\ast)^3\\(x_2-x_2^\ast)^3
\end{array}\right]+\left[1~0\right]\left[\begin{array}{l}
(x_1-x_1^\ast)^4\\(x_2-x_2^\ast)^4
\end{array}\right]\right|\\
\le&\left\|\left[\begin{array}{l}
4(x_1^\ast)^3\\2x_2^\ast
\end{array}\right]\right\|\left\|x-x_\ast\right\|+\max\left\{6(x_1^\ast)^2,1\right\}\left\|x-x_\ast\right\|^2+\left|4x_1^\ast\right|\left\|x-x_\ast\right\|^3+\left\|x-x_\ast\right\|^4.
\end{align*}
By $\|x-x_\ast\|\le\left(c_1L\right)^{1/2}$ followed by the value of Lyapunov function $L(x)$, we obtain
\begin{align*}
\left|1-p(x)^{2c}\right|
&\le\left\{16(x_1^\ast)^6+4(x_2^\ast)^2\right\}^{1/2}\left(c_1L\right)^{1/2}+\max\left\{6(x_1^\ast)^2,1\right\}c_1L+\left|4x_1^\ast\right|\left(c_1L\right)^{3/2}+\left(c_1L\right)^{2}\\
&=:C_{n,\alpha,N}(L).
\end{align*}

Finally we obtain an upper bound of $t_{\max}$ as follows:
\begin{align*}
t_{\max}
&=t_N+\int_{\tau_N}^\infty\left(1-p(x(\tau))^{2c}\right)^{k}\left(1-\frac{2c-1}{2c}\left(1-p(x(\tau))^{2c}\right)\right)d\tau\\
&=t_N+\int_{\tau_N}^\infty\left(1-p(x(\tau))^{2c}\right)^{k}\left(\frac{1}{2c}+\frac{2c-1}{2c}p(x(\tau))^{2c}\right)d\tau\\
&\le t_N+\int_{\tau_N}^\infty\left|1-p(x(\tau))^{2c}\right|^{k}d\tau\\
&\le t_N +\frac{1}{c_{\tilde N}c_1} \int_0^{L(x(\tau_N))} \frac{C_{n,\alpha,N}(L)^{k}}{L}dL, 
\end{align*}
where we have used the estimate $\frac{dL}{d\tau}\le -c_{\tilde N}c_1L$ along the trajectory $\{x(\tau)\}$, which follows from the inequality of Lyapunov functions.
The positive constants $c_{\tilde N}, c_1$ are shown in \cite{TMSTMO}.
\par
\bigskip

Next we show an estimate of $|1-p(x)^{2c}|$ with compactifications of general type $\alpha=(\alpha_1,\dots,\alpha_n)$.
As in the previous case, let $x_\ast = (x^\ast_1, \cdots, x^\ast_n) \in \mathcal{E}$ and assume that a Lyapunov function $L(x)$ is validated in a vicinity of $x_\ast$. 
Then
\begin{align*}
\left|1-p(x)^{2c}\right|
&=\left|1-\sum_{i=1}^nx_i^{2\beta_i}\right|\\
&=\left|\sum_{i=1}^n\left(x_i^\ast\right)^{2\beta_i}-\sum_{i=1}^n\left(x_i-x_i^\ast+x_i^\ast\right)^{2\beta_i}\right|\\
&=\left|\sum_{i=1}^n\sum_{j=1}^{2\beta_i}\binom{2\beta_i}{j}\left(x_i-x_i^\ast\right)^{j}\left(x_i^\ast\right)^{2\beta_i-j}\right|\\
&=\left|\sum_{j=1}^{\max\left\{2\beta_i\right\}}v_j^T\left[\begin{array}{c}
(x_1-x_1^\ast)^j\\(x_2-x_2^\ast)^j\\\vdots\\(x_n-x_n^\ast)^j
\end{array}\right]\right|,
\end{align*}
where $v_j\in\mathbb{R}^n$ is the vector given by
\[
	\left(v_j\right)_i=\left\{\begin{array}{ll}
	\binom{2\beta_i}{j}\left(x_i^\ast\right)^{2\beta_i-j}&(j\le 2\beta_i),\\
	0&(j>2\beta_i).
	\end{array}\right.
\]
Thus we have
\begin{align*}
\left|1-p(x)^{2c}\right|
&=\left|\sum_{j=1}^{\max\left\{2\beta_i\right\}}v_j^T\left[\begin{array}{c}
(x_1-x_1^\ast)^j\\(x_2-x_2^\ast)^j\\\vdots\\(x_n-x_n^\ast)^j
\end{array}\right]\right|\\
&\le \|v_1\|\|x-x_\ast\|+\sum_{j=2}^{\max\left\{2\beta_i\right\}}\|v_j\|_\infty\|x-x_\ast\|^j\\
&\le \|v_1\|\left(c_1L\right)^{1/2}+\sum_{j=2}^{\max\left\{2\beta_i\right\}}\|v_j\|_\infty\left(c_1L\right)^{j/2}=:C_{n,\alpha,N}(L),
\end{align*}
where we have used $\|x-x_\ast\|\le\left(c_1L\right)^{1/2}$.
\par
\bigskip
If $k=1$, which is the case shown in Section \ref{section-ex3}, then an upper bound estimate of $t_{\max}$ is realized as follows, for example:
\begin{align*}
t_{\max}
&\le t_N +\frac{1}{c_{\tilde N}c_1} \int_0^{L(x(\tau_N))} \frac{C_{n,\alpha,N}(L)}{L}dL\\
&= t_N +\frac{1}{c_{\tilde N}} \int_0^{L(x(\tau_N))} \left\{\|v_1\|\left(c_1L\right)^{-1/2}+\sum_{j=2}^{\max\left\{2\beta_i\right\}}\|v_j\|_\infty\left(c_1L\right)^{j/2-1}\right\}dL\\
&= t_N +\frac{1}{c_{\tilde N}} \left\{2\|v_1\|c_1^{-1/2}L(x(\tau_N))^{1/2}+\sum_{j=2}^{\max\left\{2\beta_i\right\}}\frac{2}{j}\|v_j\|_\infty c_1^{j/2-1}L(x(\tau_N))^{j/2}\right\}.
\end{align*}

\bibliographystyle{plain}
\bibliography{qh_blow_up_2}

\end{document}